\documentclass[10pt,letterpaper]{article}

\usepackage[letterpaper,left=0.84in,right=0.84in,top=0.72in,bottom=0.66in,includefoot]{geometry}
\usepackage[utf8]{inputenc}
\usepackage{amsmath,amssymb,amsfonts,mathtools}
\usepackage{bm}
\usepackage{amsthm}
\usepackage{microtype}
\usepackage{enumitem}
\usepackage{cite}
\usepackage{titlesec}
\usepackage{etoolbox}
\AtBeginEnvironment{thebibliography}{\setlength{\itemsep}{0pt}\setlength{\parsep}{0pt}\setlength{\parskip}{0pt}}
\usepackage{booktabs}
\usepackage{graphicx}
\usepackage{placeins}
\usepackage[colorlinks=true,linkcolor=blue,citecolor=blue,urlcolor=black]{hyperref}

\allowdisplaybreaks[4]
\renewcommand{\topfraction}{0.88}

\renewcommand{\textfraction}{0.10}
\renewcommand{\floatpagefraction}{0.78}
\titleformat{\section}{\normalfont\large\bfseries}{\thesection}{1em}{}
\titleformat{\subsection}{\normalfont\normalsize\bfseries}{\thesubsection}{1em}{}
\titleformat{\subsubsection}{\normalfont\normalsize\bfseries}{\thesubsubsection}{1em}{}
\titlespacing*{\section}{0pt}{11pt plus 2pt minus 2pt}{5pt plus 1pt}
\titlespacing*{\subsection}{0pt}{9pt plus 2pt minus 1pt}{4pt plus 1pt}
\titlespacing*{\subsubsection}{0pt}{7pt plus 2pt minus 1pt}{3pt plus 1pt}

\numberwithin{equation}{section}

\theoremstyle{plain}
\newtheorem{theorem}{Theorem}[section]
\newtheorem{proposition}[theorem]{Proposition}
\theoremstyle{definition}
\newtheorem{assumption}{Assumption}[section]
\theoremstyle{remark}
\newtheorem{remark}[theorem]{Remark}
\newtheorem*{remarkstar}{Remark}

\newcommand{\R}{\mathbb R}
\newcommand{\bu}{\boldsymbol u}
\newcommand{\bv}{\boldsymbol v}
\newcommand{\bw}{\boldsymbol w}
\newcommand{\bfv}{\boldsymbol f}
\newcommand{\bJ}{\boldsymbol J}
\newcommand{\bxi}{\boldsymbol\xi}
\newcommand{\bzeta}{\boldsymbol\zeta}
\newcommand{\bg}{\boldsymbol g}
\newcommand{\bX}{\boldsymbol X}
\newcommand{\bF}{\boldsymbol F}
\newcommand{\Cn}{\mathit{Cn}}
\newcommand{\Pe}{\mathit{Pe}}
\newcommand{\Rey}{\mathit{Re}}
\newcommand{\We}{\mathit{We}}
\newcommand{\Fr}{\mathit{Fr}}
\newcommand{\Th}{\mathcal T_h}
\newcommand{\That}{\widehat{\mathcal T}_h}
\newcommand{\Sh}{S_h}
\newcommand{\Vh}{V_h}
\newcommand{\Ph}{P_h}

\newcommand{\norm}[1]{\left\lVert #1\right\rVert}

\begin{document}

\begin{center}
{\large History-Compatible Energy-Stable Finite Element Schemes for Variable-Density Cahn--Hilliard--Navier--Stokes Flows on Evolving Meshes}
\vspace{6pt}

Wenbin Wang\textsuperscript{1},
Yunqing Huang\textsuperscript{2},
Yin Yang\textsuperscript{4}, and
Huayi Wei\textsuperscript{1,2,3,*}

\vspace{4pt}
{\small
\textsuperscript{1}School of Mathematics and Computational Science, Xiangtan University, Xiangtan 411105, China

\textsuperscript{2}National Center for Applied Mathematics in Hunan, Xiangtan University, Xiangtan 411105, China

\textsuperscript{3}Hunan Key Laboratory for Computation and Simulation in Science and Engineering, Xiangtan University, Xiangtan 411105, China

\textsuperscript{4}Hunan Research Center of the Basic Discipline Fundamental Algorithmic Theory and Novel Computational Methods, Xiangtan University, Xiangtan 411105, China

\textsuperscript{*}Corresponding author.
}

\end{center}
\vspace{2pt}
\begin{abstract}
We consider variable-density Cahn--Hilliard--Navier--Stokes (CHNS) discretizations on finite element meshes that may change between accepted time levels through fixed-topology motion or topology-changing remeshing. When the discrete spaces vary in time, the phase, kinetic, and pressure histories entering a multistep scheme are measured in different discrete structures and cannot, in general, be transferred by a single operator. We develop decoupled backward Euler (BE) and second-order backward differentiation formula (BDF2) schemes by combining exact physical cross-mesh pairings with history representations compatible with the corresponding phase-energy, kinetic-energy, and pressure-gradient storages. The phase update also determines an Abels--Garcke--Grün-consistent mass flux used in the momentum transport. A scalar capillary-exchange equation separates the phase and fluid solves while retaining the discrete energy exchange. The resulting field subproblems are linear, the scalar equation has a unique positive solution, and the schemes satisfy modified energy balances without a time-step restriction under the stated admissibility assumptions. Numerical experiments confirm second-order temporal convergence under both mesh updates, phase-mass conservation, modified-energy decay in the unforced tests, and comparable Rayleigh--Taylor and rising-bubble dynamics.
\end{abstract}
\noindent\textbf{Keywords:} Cahn--Hilliard--Navier--Stokes equations; variable density; evolving meshes; cross-mesh histories; energy stability.\par
\noindent\textbf{MSC codes:} 65M60, 65M50, 65M12, 76T06.

\section{Introduction}
Phase-field methods describe diffuse interfaces by continuous order parameters and provide partial differential equation models for interface motion, topological changes, and surface energy \cite{CahnHilliard1958,HohenbergHalperin1977,LowengrubTruskinovsky1998,Jacqmin1999,YueFengLiuShen2004,DingSpeltShu2007}. For two fluids with different densities, simply inserting variable-density coefficients into the classical Model H does not automatically preserve thermodynamic consistency. The Abels--Garcke--Gr\"un (AGG) model reorganizes the continuity and momentum equations through a volume-averaged solenoidal velocity, an affine density law, and a diffusion-induced mass flux, and thereby preserves the energy law at large density ratios \cite{AbelsGarckeGruen2012}. The numerical developments of Shen--Yang \cite{ShenYang2010} and Gr\"un--Klingbeil \cite{GruenKlingbeil2014}, among others, have established a substantial literature on variable-density CHNS discretizations.

Fixed-mesh CHNS discretizations are well developed. Finite element stability and convergence were established in representative works by Feng \cite{Feng2006} and Kay--Styles--Welford \cite{KayStylesWelford2008}, while Han--Wang \cite{HanWang2015} and Diegel et al.~\cite{DiegelWangWangWise2017} developed and analyzed second-order schemes. For decoupling, explicit extrapolation of phase--fluid coupling is attractive but generally breaks the exact energy exchange. Zhao--Han \cite{ZhaoHan2021} used a constraint-gradient-flow reformulation to construct second-order decoupled energy-stable schemes, and Li--Shen \cite{LiShen2022} combined the multiple scalar auxiliary variables approach with pressure-correction techniques to obtain linear fully decoupled schemes. Auxiliary-variable ideas have also been incorporated into finite element discretizations. Chen--Li--Yang--Yin \cite{ChenLiYangYin2026} proposed first- and second-order finite element schemes based on invariant energy quadratization (IEQ) and studied a hybrid treatment of the IEQ variable, while Zou et al.~\cite{ZouEtAl2026} combined scalar auxiliary variable (SAV) and zero-energy-contribution ideas, discontinuous Galerkin discretization, and a second-order projection method for a linear fully decoupled non-isothermal CHNS scheme. These works extend second-order energy-stable decoupling to different finite element settings and pressure treatments.

The variable-density AGG model imposes an additional constraint: Cahn--Hilliard diffusion determines density transport, and the same mass flux enters momentum convection and variable-density kinetic-energy storage. Gr\"un--Klingbeil \cite{GruenKlingbeil2014} constructed a thermodynamically consistent discretization of the AGG model. Shen--Yang \cite{ShenYang2015} developed decoupled, unconditionally energy-stable schemes for Cahn--Hilliard phase-field models of two-phase incompressible flow, while Liu--Shen--Yang \cite{LiuShenYang2015} and Chen--Yang \cite{ChenYang2021} developed decoupled energy-stable schemes for variable-density formulations. More recently, Wang--Li--Wang \cite{WangLiWang2024} constructed second-order linear decoupled finite element schemes for the AGG model based on multiple scalar auxiliary variables, and Zhang--Luo--Wang \cite{ZhangLuoWang2025} developed a second-order fully decoupled variable-density scheme using a decoupling constant scalar auxiliary variable formulation, separating zero-energy-contribution and non-zero-energy-contribution terms. The latter two works provide representative second-order decoupled energy-stable discretizations of the AGG system on fixed finite element spaces.

Adaptive spatial resolution is particularly natural for diffuse-interface flow because the interfacial layer occupies only a small part of the domain. Chen--Shen \cite{ChenShen2016} developed energy-stable CHNS schemes with adaptive mesh and time stepping. Chen--Huang--Yi \cite{ChenHuangYi2021} further combined a decoupled energy-stable finite element discretization with a posteriori estimators based on superconvergent cluster recovery and a time--space adaptive algorithm; a subsequent study established a priori error estimates for a totally decoupled, linear, unconditionally energy-stable finite element method for CHNS \cite{ChenHuangYi2022}. Khanwale et al.~\cite{KhanwaleEtAl2020,KhanwaleEtAl2023} used parallel adaptive octree meshes in thermodynamically consistent and projection-based CHNS discretizations. Variational moving meshes \cite{HuangRussell2011,HuangKamenski2015,HuangKamenskiRussell2015,HuangKamenski2018,WangHuangWei2026} redistribute vertices while keeping the connectivity fixed, whereas metric-based anisotropic remeshing \cite{ArpaiaEtAl2022,DapognyDobrzynskiFrey2014,DobrzynskiFrey2008} may also change the topology. In arbitrary Lagrangian--Eulerian (ALE) finite element methods, the work of Donea et al.~\cite{DoneaGiulianiHalleux1982} was followed by analyses of stability, geometric conservation, and high-order temporal accuracy \cite{FormaggiaNobile1999,FarhatGeuzaineGrandmont2001,GeuzaineGrandmontFarhat2003,FormaggiaNobile2004,EtienneGaronPelletier2009,Liu2013}. Two-phase ALE formulations include \cite{ZhengKarniadakis2016,DuanLiYang2022,GarckeNuernbergZhao2023,GarckeNuernbergZhao2024}, while a moving-mesh finite element method with energy-stability properties for moving contact lines was developed in \cite{ZhaoRen2020}. On fixed-topology mesh sequences, reference mappings and their Jacobian weights provide a natural cross-time connection, whereas topology-changing remeshing removes that identification.

Conservative transfer across nonmatching meshes can be carried out by supermesh interpolation, local Galerkin projection, and constrained finite element transfer \cite{FarrellEtAl2009,FarrellMaddison2011,PontCodinaBaiges2017}. In a multistep variable-density CHNS method, however, transfer accuracy alone is not sufficient. When accepted time levels are represented in different finite element spaces, the phase history enters the free-energy terms, the square-root kinetic variable enters the BE/BDF2 kinetic energy, the transported phase determines the density flux used in the momentum equation, and the pressure history in a pressure-correction method is measured in a weighted gradient norm. These quantities need not be preserved by the same transfer. We therefore define a separate cross-mesh relation for each of these four uses.

We work on a fixed physical domain with a sequence of finite element meshes related either by fixed-topology motion or by topology-changing remeshing. A historical field is kept as a physical function when it appears in a cross-mesh integral, but the BE/BDF2 history used as a current Galerkin unknown must belong to the current discrete space. The phase BE/BDF2 history is therefore obtained by a Ritz map in the quadratic phase-energy norm. For the kinetic term, a Jacobian-weighted isometry is used during fixed-topology ALE motion and a norm-preserving projection after remeshing. The pressure-correction method similarly transfers the pressure history in the current density-weighted gradient norm.

The Cahn--Hilliard (CH) equation uses a carrier velocity that is orthogonal to gradients in the current phase space. Together with the affine density law, its weak form determines a discrete AGG mass flux; the same flux is used in the skew momentum term. This gives phase conservation, the affine density balance, and zero kinetic self-work from one discrete mass flux. We also use an equivalent SAV--constant-scalar extension in which $r$ represents the nonlinear free energy and $q\equiv1$ expresses the capillary exchange identity. After the CH and momentum solves are separated, $q$ is obtained from a scalar quadratic equation with one positive root.

The paper makes three specific contributions. First, it defines phase and kinetic histories for both fixed-topology motion and topology-changing remeshing and proves the BE/BDF2 polarization identities across consecutive meshes. Second, it derives a density-consistent mass flux from the phase equation and uses the same flux in the affine density balance and the skew momentum term. Third, it gives saddle-point and pressure-correction BE/BDF2 schemes and proves stepwise solvability, positivity of the scalar $q$, and the associated modified energy balances. The numerical tests use a trace--logarithmic variational moving mesh \cite{WangHuangWei2026} for fixed connectivity and the MMG simplicial-remeshing library \cite{ArpaiaEtAl2022,DapognyDobrzynskiFrey2014,DobrzynskiFrey2008} when connectivity changes.

Section 2 gives the extended AGG model, the mesh-dependent finite element spaces, and the cross-mesh BE/BDF2 histories. Section 3 presents the carrier correction, mass flux, and the saddle-point and pressure-correction schemes. Section 4 proves conservation, solvability, positivity of $q$, and the modified energy balances. Section 5 describes the two mesh updates, cross-mesh integration, and the linear algebra used in the implementation. Section 6 contains the convergence and benchmark tests, followed by conclusions in Section 7.

\section{Model, mesh-dependent finite element spaces, and cross-time histories}
\subsection{Notation, nondimensionalization, and the extended AGG model}
\noindent\textbf{Notation.} Let $\Omega\subset\R^d$ ($d=2,3$) be a fixed bounded polygonal or polyhedral domain. For scalar, vector, or tensor fields $v$ and $z$, $(v,z)_D$ denotes the standard $L^2(D)$ inner product, with the Frobenius product used for tensor fields, and
\begin{equation*}
\norm{v}_D^2=(v,v)_D,\qquad |v|_{1,D}=\norm{\nabla v}_D.
\end{equation*}
The subscript $D$ is omitted when $D=\Omega$. For a nonnegative weight $c$, we use $\norm{c^{1/2}v}_D^2=(cv,v)_D$. Below, $(\cdot,\cdot)_\ell$ denotes the physical $L^2(\Omega)$ pairing at level $\ell$, whereas $\langle\cdot,\cdot\rangle_\times$ denotes the physical pairing of quantities represented on different meshes.

\noindent\textbf{Dimensionless parameters and material laws.} We use the following nondimensional variables and parameters. Let $L_r$, $u_r$, $\rho_r$, and $\eta_r$ denote the reference length, velocity, density, and dynamic viscosity, respectively; let $\epsilon$ be the interfacial thickness, $\lambda$ the phase-field energy scale, $M_r$ the reference mobility, and $g$ the gravitational acceleration. We set
\begin{equation*}
 x=L_rx^*,\qquad t=\frac{L_r}{u_r}t^*,\qquad \bu=u_r\bu^*,\qquad
 \rho=\rho_r\rho^*,\qquad \eta=\eta_r\eta^*,\qquad p=\rho_ru_r^2p^*.
\end{equation*}
The phase-field energy and chemical potential are scaled consistently with the same energy scale. After dropping the superscript $*$, the Cahn, P\'eclet, Reynolds, and Weber numbers and the squared Froude parameter are, respectively,
\begin{equation*}
 \Cn=\frac{\epsilon}{L_r},\qquad
 \Pe=\frac{2\sqrt2\,\epsilon L_ru_r}{3\lambda M_r},\qquad
 \Rey=\frac{L_r\rho_ru_r}{\eta_r},\qquad
 \We=\frac{2\sqrt2\,L_r\rho_ru_r^2}{3\lambda},\qquad
 \Fr=\frac{u_r^2}{gL_r}.
\end{equation*}
The analysis uses $\Pe>0$; its benchmark-dependent choice is specified together with the numerical setup. The nondimensional prefactor of the external force is absorbed into $\bfv$. Here $\Fr=(u_r/\sqrt{gL_r})^2$ is the square of the conventional Froude number. If gravity is retained explicitly, the right-hand side of the momentum equation is written as $\Fr^{-1}\bfv_g$. The dimensionless density and viscosity depend affinely on the phase variable,
\begin{equation}
 \rho(\phi)=\rho_0+\rho_\phi\phi,\qquad \eta(\phi)=\eta_0+\eta_\phi\phi,
\end{equation}
where
\begin{equation*}
 \rho_0=\frac{\rho_1+\rho_2}{2\rho_r},\qquad
 \rho_\phi=\frac{\rho_1-\rho_2}{2\rho_r},\qquad
 \eta_0=\frac{\eta_1+\eta_2}{2\eta_r},\qquad
 \eta_\phi=\frac{\eta_1-\eta_2}{2\eta_r}.
\end{equation*}
For a nonnegative mobility $m(\phi)\ge0$, the diffusive mass flux and the total mass flux are defined by
\begin{equation}
 \bJ=-\frac{\rho_\phi}{\Pe}m(\phi)\nabla\mu,\qquad
 \bJ_\rho=\rho(\phi)\bu+\bJ.
\end{equation}

\noindent\textbf{AGG energy identity.} In the effective-pressure formulation, the affine density law and the common mass flux $\bJ_\rho$ give the density balance and the skew form of variable-density inertia. The phase transport and capillary force exchange equal and opposite work. The auxiliary variables introduced below preserve this continuous energy law and allow the field equations to be solved linearly.

\noindent\textbf{Equivalent auxiliary-variable extension.} Decompose the bulk free energy as
\begin{equation*}
 F(\phi)=\frac{s}{2}\phi^2+G(\phi),\qquad s>0,
\end{equation*}
and choose $C_G>0$ such that
\begin{equation*}
 U(\phi)=\left(\int_\Omega G(\phi)\,dx+C_G\right)^{1/2}>0.
\end{equation*}
The continuous SAV variable is defined by $r(t)=U(\phi(t))$ \cite{ShenXuYang2018,ShenXuYang2019}. We further choose $C_A>0$ and $\alpha_q>0$. The constant $C_A$ guarantees strict positivity of the phase--SAV storage entering the subsequent equation for $q$, while $\alpha_q$ sets the positive weight of the constant-scalar term in the extended energy and scalar equation. Define
\begin{equation*}
 E_{\phi,r}(t)=\frac{1}{2\We\Cn}\bigl(\Cn^2\norm{\nabla\phi}^2+s\norm{\phi}^2\bigr)
 +\frac{1}{\We\Cn}r^2+C_A.
\end{equation*}
We start from the following continuously equivalent extended system:
\begin{subequations}\label{eq:extended}
\begin{align}
 &\partial_t\phi+\nabla\cdot(\phi\bu)=\frac1\Pe\nabla\cdot(m(\phi)\nabla\mu),\label{eq:extended-a}\\
 &\mu=-\Cn^2\Delta\phi+s\phi+\frac{r}{U(\phi)}G'(\phi),\label{eq:extended-b}\\
 &\partial_t\rho+\nabla\cdot\bJ_\rho=0,\label{eq:extended-c}\\
 &\rho^{1/2}\partial_t(\rho^{1/2}\bu)+\bJ_\rho\cdot\nabla\bu+\frac12(\nabla\cdot\bJ_\rho)\bu
 -\frac1\Rey\nabla\cdot(2\eta D(\bu))+\nabla p+\frac{q}{\We\Cn}\phi\nabla\mu=\bfv,\label{eq:extended-d}\\
 &\nabla\cdot\bu=0,\label{eq:extended-e}\\
 &\partial_t r=\frac{1}{2U(\phi)}\bigl(G'(\phi),\partial_t\phi\bigr),\label{eq:extended-f}\\
 &\left(E_{\phi,r}+\frac{q}{\alpha_q}\right)\partial_t q
 +\frac{q}{\We\Cn}\bigl[(\phi\bu,\nabla\mu)-(\phi\nabla\mu,\bu)\bigr]=0.\label{eq:extended-g}
\end{align}
\end{subequations}
Here $D(\bu)=\frac12(\nabla\bu+\nabla\bu^T)$ and the effective pressure is
\begin{equation*}
 p=p_{\rm hyd}+\frac{\Cn}{2\We}|\nabla\phi|^2+\frac{1}{\We\Cn}\bigl(F(\phi)-\mu\phi\bigr).
\end{equation*}
We prescribe $r(0)=U(\phi(0))$ and $q(0)=1$. For the velocity boundary condition, let
$\partial\Omega=\overline{\Gamma_D}\cup\overline{\Gamma_S}$ with disjoint relative interiors. We impose
\begin{equation}
 \bu=0\quad\text{on }\Gamma_D,\qquad
 \bu\cdot\boldsymbol n=0,\quad
 \bigl(2\eta D(\bu)\boldsymbol n\bigr)\cdot\boldsymbol t=0
 \quad\text{on }\Gamma_S,
 \qquad
 \partial_n\phi=\partial_n\mu=0\quad\text{on }\partial\Omega,
\end{equation}
where $\boldsymbol t$ denotes any tangential direction. The pure no-slip case corresponds to
$\Gamma_D=\partial\Omega$. In either case the velocity is impermeable, the viscous boundary work vanishes,
and $\bJ_\rho\cdot\boldsymbol n=0$ on $\partial\Omega$; hence the boundary terms used in the mass and energy
identities below vanish.
With $r(0)=U(\phi(0))$, equation \eqref{eq:extended-f} gives $r(t)=U(\phi(t))$. Moreover,
\begin{equation*}
 (\phi\bu,\nabla\mu)=(\phi\nabla\mu,\bu),
\end{equation*}
so that $q(t)\equiv1$ on the branch initialized by $q(0)=1$. Hence the extended system is equivalent to the standard effective-pressure AGG formulation on its consistent branch: $r$ represents the nonlinear free energy, whereas $q$ records the capillary-exchange identity.

Define the extended energy
\begin{equation}
 E_{\rm ext}(\phi,\bu,r,q)=\frac12\norm{\rho(\phi)^{1/2}\bu}^2+qE_{\phi,r}+\frac{1}{2\alpha_q}q^2.
\end{equation}

\begin{theorem}[Energy balance of the continuous extended AGG system]\label{thm:continuous-energy}
Let $(\phi,\mu,\rho,\bu,p,r,q)$ be a sufficiently smooth solution of \eqref{eq:extended}--(2.4) with $q(t)>0$. Then
\begin{equation}
 \frac{d}{dt}E_{\rm ext}+\frac{q}{\Pe\We\Cn}\norm{m(\phi)^{1/2}\nabla\mu}^2
 +\frac{2}{\Rey}\norm{\eta(\phi)^{1/2}D(\bu)}^2=(\bfv,\bu).
\end{equation}
If $r(0)=U(\phi(0))$ and $q(0)=1$, then $r(t)=U(\phi(t))$ and $q(t)=1$. In this case, define the standard AGG physical energy by
\begin{equation*}
 E_{\rm AGG}(\phi,\bu)=\frac12\norm{\rho(\phi)^{1/2}\bu}^2
 +\frac{1}{\We\Cn}\left[\frac{\Cn^2}{2}\norm{\nabla\phi}^2+\int_\Omega F(\phi)\,dx\right].
\end{equation*}
Then $E_{\rm ext}=E_{\rm AGG}+C_G/(\We\Cn)+C_A+1/(2\alpha_q)$; that is, the extended and physical AGG energies differ only by a constant.
\end{theorem}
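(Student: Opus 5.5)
The plan is the standard energy method: test each equation with the multiplier dual to its storage term and sum, so that the exchange terms cancel.

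\textbf{Kinetic energy.} Test \eqref{eq:extended-d} with $\bu$. The first two terms combine to $\frac12\frac{d}{dt}\norm{\rho^{1/2}\bu}^2$, since
\[
\rho^{1/2}\partial_t(\rho^{1/2}\bu)\cdot\bu=\tfrac12\partial_t(\rho|\bu|^2).
\]
After integrating by parts and using $\bJ_\rho\cdot\boldsymbol n=0$,
\[
(\bJ_\rho\cdot\nabla\bu,\bu)=-\tfrac12\bigl((\nabla\cdot\bJ_\rho)\bu,\bu\bigr),
\]
so the transport term and the $\frac12(\nabla\cdot\bJ_\rho)\bu$ term cancel. This is the skew form, and it needs no use of \eqref{eq:extended-c} at this stage. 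The viscous term gives $\frac{2}{\Rey}\norm{\eta^{1/2}D(\bu)}^2$ by symmetry of $D$. The boundary contributions vanish on $\Gamma_D$ because $\bu=0$, and on $\Gamma_S$ because $\bu\cdot\boldsymbol n=0$ and the tangential stress vanishes. The pressure term vanishes by \eqref{eq:extended-e}. What remains is the capillary work $\frac{q}{\We\Cn}(\phi\nabla\mu,\bu)$.

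\textbf{Phase and auxiliary energies.} Test \eqref{eq:extended-a} with $\frac{q}{\We\Cn}\mu$. Using \eqref{eq:extended-b} and the Neumann conditions, $(\partial_t\phi,\mu)$ equals
\[
\tfrac12\frac{d}{dt}\bigl(\Cn^2\norm{\nabla\phi}^2+s\norm{\phi}^2\bigr)+\frac{r}{U}\bigl(G'(\phi),\partial_t\phi\bigr),
\]
and by \eqref{eq:extended-f} the last term is $2r\,\partial_t r=\frac{d}{dt}r^2$. Hence $(\partial_t\phi,\mu)=\We\Cn\,\frac{d}{dt}E_{\phi,r}$, the constant $C_A$ being time independent. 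The convective term gives $-\frac{q}{\We\Cn}(\phi\bu,\nabla\mu)$ after integration by parts with $\bu\cdot\boldsymbol n=0$. The mobility term gives $-\frac{q}{\Pe\We\Cn}\norm{m^{1/2}\nabla\mu}^2$. Thus
\[
q\frac{d}{dt}E_{\phi,r}+\frac{q}{\Pe\We\Cn}\norm{m^{1/2}\nabla\mu}^2=\frac{q}{\We\Cn}(\phi\bu,\nabla\mu).
\]

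\textbf{Scalar $q$ equation and summation.} Multiply \eqref{eq:extended-g} by $1$. Because
\[
\frac{d}{dt}\Bigl(qE_{\phi,r}+\frac{q^2}{2\alpha_q}\Bigr)=q\frac{d}{dt}E_{\phi,r}+\Bigl(E_{\phi,r}+\frac{q}{\alpha_q}\Bigr)\partial_t q,
\]
equation \eqref{eq:extended-g} says that $\bigl(E_{\phi,r}+q/\alpha_q\bigr)\partial_t q$ equals $-\frac{q}{\We\Cn}\bigl[(\phi\bu,\nabla\mu)-(\phi\nabla\mu,\bu)\bigr]$. Adding the three relations, the capillary works $\pm\frac{q}{\We\Cn}(\phi\bu,\nabla\mu)$ and $\pm\frac{q}{\We\Cn}(\phi\nabla\mu,\bu)$ cancel pairwise. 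This gives the stated balance with right-hand side $(\bfv,\bu)$. The hypothesis $q>0$ is not needed for the identity itself; it only ensures that the dissipation terms are nonnegative.

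\textbf{Consistent branch.} Integrating \eqref{eq:extended-f} shows that $r^2-U(\phi)^2$ is constant: its time derivative is $2r\partial_t r-(G'(\phi),\partial_t\phi)$, which vanishes once $r=U$ along the solution. The cleaner route is to note that $\frac{d}{dt}(r-U(\phi))=\frac{1}{2U}(G',\partial_t\phi)-\frac{d}{dt}U(\phi)=0$, because $\frac{d}{dt}U=\frac{1}{2U}(G'(\phi),\partial_t\phi)$; hence $r-U(\phi)$ is constant and equal to $0$. For $q$, the bracket in \eqref{eq:extended-g} vanishes identically because $(\phi\bu,\nabla\mu)=(\phi\nabla\mu,\bu)$ pointwise. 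Since $E_{\phi,r}+q/\alpha_q>0$ for $q>0$ (positivity of $E_{\phi,r}\ge C_A$), this yields $\partial_t q=0$ and so $q\equiv1$. Substituting $r^2=\int G\,dx+C_G$ and $q=1$ into $E_{\rm ext}$, and using $\frac{s}{2}\norm{\phi}^2+\int G\,dx=\int F\,dx$, gives the constant offset $C_G/(\We\Cn)+C_A+1/(2\alpha_q)$.

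The only delicate point is bookkeeping of the boundary terms under the mixed conditions and the skew cancellation. I expect no genuine obstacle; the step most prone to sign errors is matching the two capillary pairings with the $q$ equation.
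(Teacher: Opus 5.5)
Your proposal is correct and follows essentially the same route as the paper. You test the phase equation with $\mu$, identify $(\partial_t\phi,\mu)$ with $\We\Cn\,\frac{d}{dt}E_{\phi,r}$ via the chemical-potential and SAV equations, multiply by $q$, and absorb the $q$-equation through the product rule; you then add the momentum equation tested with $\bu$ so the capillary exchange terms cancel. Your treatment of the consistent branch is more explicit than the paper's, which relies on the remark preceding the theorem: the argument $\frac{d}{dt}\bigl(r-U(\phi)\bigr)=0$ and the positivity of $E_{\phi,r}+q/\alpha_q$ for $q\equiv1$ are both sound. The preliminary remark about $r^2-U(\phi)^2$ being constant is circular and should simply be dropped.
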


\begin{proof}
We first derive the phase--SAV balance. Testing \eqref{eq:extended-a} by $\mu/(\We\Cn)$ and integrating by parts gives
\begin{equation*}
 \frac{1}{\We\Cn}(\partial_t\phi,\mu)-\frac{1}{\We\Cn}(\phi\bu,\nabla\mu)
 +\frac{1}{\Pe\We\Cn}\norm{m(\phi)^{1/2}\nabla\mu}^2=0.
\end{equation*}
On the other hand, testing \eqref{eq:extended-b} by $\partial_t\phi/(\We\Cn)$ yields
\begin{equation*}
 \frac{1}{\We\Cn}(\mu,\partial_t\phi)=\frac{1}{\We\Cn}
 \left[\Cn^2(\nabla\phi,\nabla\partial_t\phi)+s(\phi,\partial_t\phi)
 +\frac{r}{U(\phi)}(G'(\phi),\partial_t\phi)\right].
\end{equation*}
By \eqref{eq:extended-f},
\begin{equation*}
 \frac{r}{U(\phi)}(G'(\phi),\partial_t\phi)=2r\,\partial_t r,
\end{equation*}
and therefore
\begin{equation*}
 \frac{1}{\We\Cn}(\mu,\partial_t\phi)=\frac{d}{dt}E_{\phi,r}.
\end{equation*}
Substitution into the phase equation gives the continuous phase--SAV balance
\begin{equation*}
 \frac{d}{dt}E_{\phi,r}-\frac{1}{\We\Cn}(\phi\bu,\nabla\mu)
 +\frac{1}{\Pe\We\Cn}\norm{m(\phi)^{1/2}\nabla\mu}^2=0.
\end{equation*}
Multiply this identity by $q$. At the same time, write \eqref{eq:extended-g} in the form
\begin{equation*}
 E_{\phi,r}\partial_tq+\frac{q}{\alpha_q}\partial_tq
 +\frac{q}{\We\Cn}\bigl[(\phi\bu,\nabla\mu)-(\phi\nabla\mu,\bu)\bigr]=0.
\end{equation*}
Adding the two relations cancels the two occurrences of $(\phi\bu,\nabla\mu)$, and the product rule yields
\begin{equation*}
 \frac{d}{dt}\left(qE_{\phi,r}+\frac{q^2}{2\alpha_q}\right)
 +\frac{q}{\Pe\We\Cn}\norm{m^{1/2}\nabla\mu}^2
 -\frac{q}{\We\Cn}(\phi\nabla\mu,\bu)=0.
\end{equation*}
Next, test \eqref{eq:extended-d} by $\bu$. Since
\begin{equation*}
 (\bJ_\rho\cdot\nabla\bu+\tfrac12(\nabla\cdot\bJ_\rho)\bu,\bu)=0,
 \qquad (p,\nabla\cdot\bu)=0,
\end{equation*}
the momentum balance is
\begin{equation*}
 (\bfv,\bu)=\frac12\frac{d}{dt}\norm{\rho^{1/2}\bu}^2
 +\frac{2}{\Rey}\norm{\eta^{1/2}D(\bu)}^2
 +\frac{q}{\We\Cn}(\phi\nabla\mu,\bu).
\end{equation*}
Adding the last two identities cancels the capillary-exchange terms exactly and gives (2.6). On the consistent branch, $r^2=\int_\Omega G(\phi)\,dx+C_G$ and $q=1$; substituting these identities into $E_{\phi,r}$ and (2.5) gives the stated constant offset.
\end{proof}

\subsection{Mesh-induced finite element spaces and BE/BDF2 notation}
For each accepted time level $t^\ell$, let $\Th^\ell$ be a conforming triangulation of the fixed physical domain $\Omega$. Define
\begin{equation*}
 \mathbf V(\Omega)=\left\{\boldsymbol v\in[H^1(\Omega)]^d:
 \boldsymbol v=0\ \text{on }\Gamma_D,\
 \boldsymbol v\cdot\boldsymbol n=0\ \text{on }\Gamma_S\right\}.
\end{equation*}
The tangential free-slip condition on $\Gamma_S$ is natural in the viscous weak form. We take
\begin{equation*}
 \Sh^\ell\subset H^1(\Omega),\qquad \Vh^\ell\subset\mathbf V(\Omega),\qquad \Ph^\ell\subset L^2(\Omega).
\end{equation*}
We assume $1\in\Sh^\ell\cap\Ph^\ell$ and set $S_{h,0}^\ell=\Sh^\ell\cap L_0^2(\Omega)$ and $P_{h,0}^\ell=\Ph^\ell\cap L_0^2(\Omega)$. For quantities represented on two accepted meshes, define the physical cross-space pairing
\begin{equation}
 \langle v_h^m,z_h^n\rangle_\times:=\int_\Omega v_h^m(x)z_h^n(x)\,dx.
\end{equation}
The same notation is used for vector, gradient, and tensor pairings. If both factors are represented on the current mesh, it coincides with the current physical inner product. Unless stated otherwise, $(\cdot,\cdot)_\ell$ and $\norm{\cdot}_\ell$ denote the physical $L^2(\Omega)$ inner product and norm at level $\ell$.

The mesh sequence may be generated by continuous fixed-topology motion or by instantaneous topology-changing remeshing. Throughout this work, the finite element families and polynomial degrees are fixed; the changes in $\Sh^\ell$, $\Vh^\ell$, and $\Ph^\ell$ are induced by these mesh operations. All accepted meshes preserve the boundary partition $\Gamma_D\cup\Gamma_S$, are shape regular, and the velocity--pressure pairs satisfy a uniform discrete inf--sup condition \cite{BoffiBrezziFortin2013}. On fixed-topology intervals, the reference maps introduced in Section 2.4 and their inverses are uniformly admissible.

Fix $k\in\{1,2\}$. For histories viewed as physical functions on the fixed domain, define
\begin{equation}
 z_H^{n+1}=\begin{cases}z^n,&k=1,\\(4z^n-z^{n-1})/3,&k=2,\end{cases}
 \qquad
 z_E^{n+1}=\begin{cases}z^n,&k=1,\\2z^n-z^{n-1},&k=2,\end{cases}
\end{equation}
and set $\gamma_1=1$, $\gamma_2=3/2$,
\begin{equation}
 D_\tau^{(k)}z^{n+1}=\frac{\gamma_k}{\tau}(z^{n+1}-z_H^{n+1}).
\end{equation}
Extrapolated histories such as $\phi_E^{n+1}$, $G'(\phi_E^{n+1})$, and $m_E^{n+1}$ are evaluated as physical functions in the cross-space pairings below.

\subsection{Phase-field BE/BDF2 history}
With the splitting $F(\phi)=\frac{s}{2}\phi^2+G(\phi)$ from Section 2.1, define the quadratic phase-energy norm
\begin{equation}
 \norm{v}_{\Cn,s}^2:=\Cn^2\norm{\nabla v}^2+s\norm{v}^2.
\end{equation}
The phase BE/BDF2 history is read through this quadratic part of the chemical-potential relation. Its current representative $\phi_{H,h}^{n+1}\in\Sh^{n+1}$ is defined by
\begin{equation}\label{eq:phase-history}
 \Cn^2(\nabla\phi_{H,h}^{n+1},\nabla\chi_h)_{n+1}+s(\phi_{H,h}^{n+1},\chi_h)_{n+1}
 =\Cn^2\langle\nabla\phi_H^{n+1},\nabla\chi_h\rangle_\times+s\langle\phi_H^{n+1},\chi_h\rangle_\times,
 \quad\forall\chi_h\in\Sh^{n+1}.
\end{equation}
Since $s>0$, this problem is coercive and uniquely solvable. Choosing $\chi_h=1$ gives
\begin{equation*}
 (\phi_{H,h}^{n+1},1)_{n+1}=\langle\phi_H^{n+1},1\rangle_\times.
\end{equation*}

\subsection{Kinetic BE/BDF2 histories}
\noindent\textbf{Kinetic BE/BDF2 storage.} Define the square-root kinetic-energy variable
\begin{equation}
 \bzeta_h^\ell=(\rho_h^\ell)^{1/2}\bu_h^\ell.
\end{equation}
On a fixed space, the BDF2 kinetic storage reads the two quadratic states $\norm{\bzeta_h^\ell}^2$ and $\norm{2\bzeta_h^\ell-\bzeta_h^{\ell-1}}^2$. Accordingly, if $\bg_{0,H}^\ell$ denotes the current representative of the previously accepted $\bg_{0,h}^{\ell-1}$, define
\begin{equation}
 \bg_{0,h}^\ell=\bzeta_h^\ell,\qquad \bg_{1,h}^\ell=2\bzeta_h^\ell-\bg_{0,H}^\ell\quad(k=2).
\end{equation}
After the BE start-up, $\bg_{1,h}^1$ is initialized by the same relation.

\noindent\textbf{Fixed-topology ALE transfer.} On a fixed-topology interval, let $\bX_h^\ell:\widehat\Omega\to\Omega$ map a common reference triangulation $\That$ onto $\Th^\ell$, and set
\begin{equation}
 \Th^\ell=\bX_h^\ell(\That),\qquad \bF_h^\ell=\widehat\nabla \bX_h^\ell,\qquad J_h^\ell=\det \bF_h^\ell>0.
\end{equation}
The reference kinetic variable is
\begin{equation}
 \widehat{\bzeta}_h^\ell=(J_h^\ell)^{1/2}(\bzeta_h^\ell\circ \bX_h^\ell)
 =\bigl[J_h^\ell(\rho_h^\ell\circ \bX_h^\ell)\bigr]^{1/2}(\bu_h^\ell\circ \bX_h^\ell),
 \qquad \norm{\widehat{\bzeta}_h^\ell}_{\widehat\Omega}^2=\norm{\bzeta_h^\ell}_\ell^2.
\end{equation}
With $\bw=\partial_t\bX\circ\bX^{-1}$, the identities $\partial_tJ=J\nabla\cdot\bw$ and the chain rule give
\begin{equation*}
 J^{-1/2}\partial_t\widehat{\bzeta}\circ\bX^{-1}=\partial_t\bzeta+\bw\cdot\nabla\bzeta+\frac12(\nabla\cdot\bw)\bzeta.
\end{equation*}
\begin{subequations}\label{eq:kinetic-transfer}
The corresponding isometric history connection is
\begin{equation}\label{eq:kinetic-transfer-fixed}
 \bg_{j,H}^{n+1}=(J_h^{n+1})^{-1/2}\Bigl[(J_h^n)^{1/2}(\bg_{j,h}^n\circ \bX_h^n)\Bigr]\circ(\bX_h^{n+1})^{-1}.
\end{equation}

\noindent\textbf{Topology-changing transfer.} Let $\mathcal P$ be the $L^2$-orthogonal projection onto the chosen piecewise-polynomial history space on the current mesh. Define
\begin{equation}\label{eq:kinetic-transfer-constrained}
 \bg_{j,H}^{n+1}=\arg\min_{z_h}\frac12\norm{z_h-\bg_{j,h}^n}^2
 \quad\text{s.t.}\quad \norm{z_h}_{n+1}=\norm{\bg_{j,h}^n}_n.
\end{equation}
For $\mathcal P \bg_{j,h}^n\ne0$, the nearest positive branch is
\begin{equation}\label{eq:kinetic-transfer-normalized}
 \bg_{j,H}^{n+1}=\frac{\norm{\bg_{j,h}^n}_n}{\norm{\mathcal P \bg_{j,h}^n}_{n+1}}\,\mathcal P \bg_{j,h}^n,
 \qquad \norm{\bg_{j,H}^{n+1}}_{n+1}=\norm{\bg_{j,h}^n}_n.
\end{equation}
\end{subequations}
We assume $\mathcal P \bg_{j,h}^n\ne0$ whenever $\bg_{j,h}^n\ne0$, and set $\bg_{j,H}^{n+1}=0$ for a zero history. Orthogonality of $\mathcal P$ gives
\begin{equation*}
 \norm{\bg_{j,H}^{n+1}-\bg_{j,h}^n}\le\sqrt2\,\norm{(I-\mathcal P)\bg_{j,h}^n},
\end{equation*}
so the radial normalization preserves the approximation order of the underlying projection.

\noindent\textbf{Common BE/BDF2 reconstruction.} Both transfers use the same history combination
\begin{equation}\label{eq:kinetic-reconstruction}
 \bzeta_{H,h}^{n+1}=\begin{cases}
 \bg_{0,H}^{n+1},&k=1,\\[2pt]
 \dfrac{2\bg_{0,H}^{n+1}+\bg_{1,H}^{n+1}}{3},&k=2,
 \end{cases}
 \qquad
 \begin{cases}
 \bg_{0,h}^{n+1}=\bzeta_h^{n+1},\\
 \bg_{1,h}^{n+1}=2\bzeta_h^{n+1}-\bg_{0,H}^{n+1},&k=2.
 \end{cases}
\end{equation}

\subsection{Cross-mesh BE/BDF2 polarization}
\begin{proposition}[Cross-mesh BE/BDF2 polarization identities]\label{prop:bdf-pol}
The phase history \eqref{eq:phase-history} and kinetic histories \eqref{eq:kinetic-transfer-fixed}--\eqref{eq:kinetic-reconstruction} satisfy the following polarization identities.
\begin{subequations}\label{eq:bdf-polarization}
For $k=1$,
\begin{align*}
 &\tau\left[\Cn^2\left(\nabla\phi_h^{n+1},\nabla\frac{\gamma_1}{\tau}(\phi_h^{n+1}-\phi_{H,h}^{n+1})\right)_{n+1}
 +s\left(\phi_h^{n+1},\frac{\gamma_1}{\tau}(\phi_h^{n+1}-\phi_{H,h}^{n+1})\right)_{n+1}\right]\notag\\
 &\qquad=\frac12\left[\norm{\phi_h^{n+1}}_{\Cn,s}^2-\norm{\phi_h^n}_{\Cn,s}^2
 +\norm{\phi_h^{n+1}-\phi_h^n}_{\Cn,s}^2\right],\qquad k=1.
\end{align*}
and, for $k=2$,
\begin{align}
 &\tau\left[\Cn^2\left(\nabla\phi_h^{n+1},\nabla\frac{\gamma_2}{\tau}(\phi_h^{n+1}-\phi_{H,h}^{n+1})\right)_{n+1}
 +s\left(\phi_h^{n+1},\frac{\gamma_2}{\tau}(\phi_h^{n+1}-\phi_{H,h}^{n+1})\right)_{n+1}\right]\notag\\
 &\quad=\frac14\Bigl[\norm{\phi_h^{n+1}}_{\Cn,s}^2+\norm{2\phi_h^{n+1}-\phi_h^n}_{\Cn,s}^2
 -\norm{\phi_h^n}_{\Cn,s}^2-\norm{2\phi_h^n-\phi_h^{n-1}}_{\Cn,s}^2
 +\norm{\phi_h^{n+1}-2\phi_h^n+\phi_h^{n-1}}_{\Cn,s}^2\Bigr].\label{eq:phase-polarization-bdf2}
\end{align}
Moreover,
\begin{align}
 \gamma_1(\bzeta_h^{n+1}-\bzeta_{H,h}^{n+1},\bzeta_h^{n+1})_{n+1}
 &=\frac12\Bigl[\norm{\bg_{0,h}^{n+1}}_{n+1}^2-\norm{\bg_{0,h}^{n}}_{n}^2
 +\norm{\bzeta_h^{n+1}-\bg_{0,H}^{n+1}}_{n+1}^2\Bigr],\qquad k=1,\notag\\
 \gamma_2(\bzeta_h^{n+1}-\bzeta_{H,h}^{n+1},\bzeta_h^{n+1})_{n+1}
 &=\frac14\Bigl[\norm{\bg_{0,h}^{n+1}}_{n+1}^2+\norm{\bg_{1,h}^{n+1}}_{n+1}^2
 -\norm{\bg_{0,h}^{n}}_{n}^2-\norm{\bg_{1,h}^{n}}_{n}^2
 +\norm{\bzeta_h^{n+1}-\bg_{1,H}^{n+1}}_{n+1}^2\Bigr],\qquad k=2.\label{eq:kinetic-polarization-bdf2}
\end{align}
\end{subequations}
\end{proposition}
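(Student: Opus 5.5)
The plan is to reduce every identity to an elementary polarization identity in a single inner-product space. The key is that each history operator either reproduces the relevant quadratic form exactly when tested in the current space (phase) or preserves the relevant norm exactly across meshes (kinetic).

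\textbf{Phase identities.} Set $a(v,w):=\Cn^2(\nabla v,\nabla w)+s(v,w)$ on $H^1(\Omega)$. Since all $\Sh^\ell\subset H^1(\Omega)$ are conforming on the fixed physical domain, $a$ is a single inner product containing every $\phi_h^\ell$, and $\norm{v}_{\Cn,s}^2=a(v,v)$ regardless of the mesh on which $v$ is represented. Because $\phi_h^{n+1}\in\Sh^{n+1}$, I will take $\chi_h=\phi_h^{n+1}$ in \eqref{eq:phase-history}. This replaces $a(\phi_{H,h}^{n+1},\phi_h^{n+1})$ by the cross pairing $a(\phi_H^{n+1},\phi_h^{n+1})$ with the physical history $\phi_H^{n+1}$. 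The left-hand side then becomes $\gamma_k\,a(\phi_h^{n+1},\phi_h^{n+1}-\phi_H^{n+1})$.

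For $k=1$, with $x=\phi_h^{n+1}$ and $y=\phi_h^n$, the identity $2a(x,x-y)=\norm{x}^2-\norm{y}^2+\norm{x-y}^2$ gives the claim. For $k=2$, one has $\tfrac32(x-\tfrac{4y-z}{3})=\tfrac12(3x-4y+z)$ with $z=\phi_h^{n-1}$, so the claim is the standard BDF2 identity
\[
2a(3x-4y+z,x)=\norm{x}^2+\norm{2x-y}^2-\norm{y}^2-\norm{2y-z}^2+\norm{x-2y+z}^2,
\]
in the $a$-norm, which I will check by expanding both sides. The Ritz map's role is exactly that its error is $a$-orthogonal to $\Sh^{n+1}$, and $\phi_h^{n+1}$ lies in that space.

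\textbf{Kinetic identities.} Here the histories $\bg_{j,H}^{n+1}$ lie in the current space, so all pairings on the left are level-$(n+1)$ pairings. The first step is the norm-preservation property $\norm{\bg_{j,H}^{n+1}}_{n+1}=\norm{\bg_{j,h}^n}_n$.
\begin{itemize}
\item In the fixed-topology case \eqref{eq:kinetic-transfer-fixed}, I will change variables $x=\bX_h^{n+1}(\hat x)$. The factor $J_h^{n+1}$ cancels $(J_h^{n+1})^{-1/2}$ squared, leaving $\int_{\widehat\Omega}J_h^n|\bg_{j,h}^n\circ\bX_h^n|^2$. A second change of variables turns this into $\norm{\bg_{j,h}^n}_n^2$; this uses $J_h^\ell>0$ and invertibility of the maps.
\item In the remeshing case, norm preservation is built into \eqref{eq:kinetic-transfer-normalized}, including the zero-history convention.
\end{itemize}

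Then, with $x=\bzeta_h^{n+1}$, $a=\bg_{0,H}^{n+1}$ and $b=\bg_{1,H}^{n+1}$:
\begin{itemize}
\item For $k=1$, $2(x-a,x)=\norm{x}^2-\norm{a}^2+\norm{x-a}^2$.
\item For $k=2$, $\tfrac32(x-\tfrac{2a+b}{3},x)=\tfrac12(3x-2a-b,x)$. Expanding gives $2(3x-2a-b,x)=\norm{x}^2+\norm{2x-a}^2-\norm{a}^2-\norm{b}^2+\norm{x-b}^2$, since both sides equal $6\norm{x}^2-4(a,x)-2(b,x)$.
\end{itemize}
Finally I substitute $\bg_{0,h}^{n+1}=x$ and $\bg_{1,h}^{n+1}=2x-a$ from \eqref{eq:kinetic-reconstruction}. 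Replacing $\norm{a}_{n+1}$ and $\norm{b}_{n+1}$ by $\norm{\bg_{0,h}^n}_n$ and $\norm{\bg_{1,h}^n}_n$ via norm preservation then yields \eqref{eq:kinetic-polarization-bdf2}.

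The algebra is routine. The step needing care is bookkeeping: which quantities are physical functions (the phase history, tested only against current functions through $a$) and which are current-space representatives (the kinetic histories, whose norms must be transported exactly). For the ALE isometry, the Jacobian change-of-variables check is the only genuinely mesh-dependent step. For $k=2$ I also need $n\ge1$, with $\bg_{1,h}^n$ defined through the BE start-up as stated after the definition of the kinetic storage.
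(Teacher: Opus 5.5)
Your proposal is correct and follows the same route as the paper. For the phase part, you test the Ritz relation with $\chi_h=\phi_h^{n+1}$ to replace the current representative by the physical history, then apply the two-point and BDF2 polarization identities in the $\|\cdot\|_{\Cn,s}$ inner product. For the kinetic part, you use exact norm preservation of both transfers and expand directly with $\bg_{1,h}^{n+1}=2\bzeta_h^{n+1}-\bg_{0,H}^{n+1}$. Your explicit change-of-variables check for the ALE isometry and your expanded algebra fill in details that the paper leaves as ``direct expansion.''
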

\begin{proof}
Choosing $\chi_h=\phi_h^{n+1}$ in \eqref{eq:phase-history} gives
\begin{equation*}
 \Cn^2(\nabla\phi_{H,h}^{n+1},\nabla\phi_h^{n+1})_{n+1}+s(\phi_{H,h}^{n+1},\phi_h^{n+1})_{n+1}
 =\Cn^2\langle\nabla\phi_H^{n+1},\nabla\phi_h^{n+1}\rangle_\times+s\langle\phi_H^{n+1},\phi_h^{n+1}\rangle_\times.
\end{equation*}
Substitution of the BE/BDF2 histories from (2.8), followed by the standard two-point and BDF2 polarization identities, gives the stated BE identity and \eqref{eq:phase-polarization-bdf2}. For the kinetic part, both \eqref{eq:kinetic-transfer-fixed} and \eqref{eq:kinetic-transfer-normalized} give
\begin{equation*}
 \norm{\bg_{j,H}^{n+1}}_{n+1}=\norm{\bg_{j,h}^{n}}_n.
\end{equation*}
The $k=1$ identity follows from two-point polarization. For $k=2$, (2.13) gives
\begin{equation*}
 \bg_{1,h}^{n+1}=2\bzeta_h^{n+1}-\bg_{0,H}^{n+1},\qquad
 \bzeta_{H,h}^{n+1}=\frac{2\bg_{0,H}^{n+1}+\bg_{1,H}^{n+1}}{3}.
\end{equation*}
Direct expansion and the two norm-preservation identities yield \eqref{eq:kinetic-polarization-bdf2}.
\end{proof}

\section{Fully discrete schemes with mesh motion and remeshing}
We advance from the previous accepted levels to the current mesh $\Th^{n+1}$. The scalar histories $r_H^{n+1}$ and $q_H^{n+1}$ are defined by (2.8), with
\begin{equation*}
 r^0=\bigl((G(\phi_h^0),1)_0+C_G\bigr)^{1/2},\qquad q^0=1.
\end{equation*}

\noindent\textbf{Momentum-transport mesh velocity.} For the ALE-relative momentum transport, set
\begin{equation}
 \bw_h^{n+1,(k)}=\begin{cases}
 [D_\tau^{(k)}\bX_h^{n+1}]\circ(\bX_h^{n+1})^{-1},&\text{fixed-topology ALE update},\\
 0,&\text{physical-time substep after remeshing}.
 \end{cases}
\end{equation}

\subsection{Common steps}
\noindent\textbf{Step 1. Cross-mesh BE/BDF2 histories.} Form $\phi_E^{n+1}$ and $\bu_{E,h}^{n+1}$ by (2.8), set $m_E^{n+1}=m(\phi_E^{n+1})$, and determine $\phi_{H,h}^{n+1}\in\Sh^{n+1}$ from
\begin{equation*}
 \Cn^2(\nabla\phi_{H,h}^{n+1},\nabla\chi_h)_{n+1}+s(\phi_{H,h}^{n+1},\chi_h)_{n+1}
 =\Cn^2\langle\nabla\phi_H^{n+1},\nabla\chi_h\rangle_\times+s\langle\phi_H^{n+1},\chi_h\rangle_\times,
 \qquad\forall\chi_h\in\Sh^{n+1}.
\end{equation*}
Construct $\bzeta_{H,h}^{n+1}$ by \eqref{eq:kinetic-transfer-fixed}--\eqref{eq:kinetic-reconstruction}.

\noindent\textbf{Step 2. Compatible carrier.} At a topology-changing step, find $\lambda_h^{n+1}\in S_{h,0}^{n+1}$ and set $\bu_{c,h}^{n+1}$ by
\begin{equation}
 (\nabla\lambda_h^{n+1},\nabla\chi_h)_{n+1}=\langle\bu_{E,h}^{n+1},\nabla\chi_h\rangle_\times,
 \qquad\forall\chi_h\in S_{h,0}^{n+1},
 \qquad \bu_{c,h}^{n+1}=\bu_{E,h}^{n+1}-\nabla\lambda_h^{n+1}.
\end{equation}
On a fixed-topology interval, use the reference-space correction and Piola map of Remark~\ref{rem:piola-carrier}. In either case,
\begin{equation*}
 \langle\bu_{c,h}^{n+1},\nabla\chi_h\rangle_\times=0,
 \qquad\forall\chi_h\in\Sh^{n+1}.
\end{equation*}

\noindent\textbf{Step 3. Phase--SAV update and AGG transport.} Set
\begin{equation}
 U_E^{n+1}=\left(\langle G(\phi_E^{n+1}),1\rangle_\times+C_G\right)^{1/2}>0.
\end{equation}
Find $(\phi_h^{n+1},\mu_h^{n+1},r^{n+1})\in\Sh^{n+1}\times\Sh^{n+1}\times\R$ such that, for all $\omega_h,\chi_h\in\Sh^{n+1}$,
\begin{subequations}\label{eq:phase-sav}
\begin{align}
 &\left(\frac{\gamma_k}{\tau}(\phi_h^{n+1}-\phi_{H,h}^{n+1}),\omega_h\right)_{n+1}
 -\langle\phi_E^{n+1}\bu_{c,h}^{n+1},\nabla\omega_h\rangle_\times
 +\frac1\Pe\langle m_E^{n+1}\nabla\mu_h^{n+1},\nabla\omega_h\rangle_\times=0,\label{eq:phase-sav-a}\\
 &(\mu_h^{n+1},\chi_h)_{n+1}=\Cn^2(\nabla\phi_h^{n+1},\nabla\chi_h)_{n+1}
 +s(\phi_h^{n+1},\chi_h)_{n+1}
 +\frac{r^{n+1}}{U_E^{n+1}}\langle G'(\phi_E^{n+1}),\chi_h\rangle_\times,\label{eq:phase-sav-b}\\
 &D_\tau^{(k)}r^{n+1}=\frac{1}{2U_E^{n+1}}
 \left\langle G'(\phi_E^{n+1}),\frac{\gamma_k}{\tau}(\phi_h^{n+1}-\phi_{H,h}^{n+1})\right\rangle_\times.\label{eq:phase-sav-c}
\end{align}
\end{subequations}
Set
\begin{equation}
 \rho_h^{n+1}=\rho_0+\rho_\phi\phi_h^{n+1},\qquad
 \eta_h^{n+1}=\eta_0+\eta_\phi\phi_h^{n+1},
\end{equation}
\begin{equation}
 \bJ_{\rho,h}^{n+1}=\rho_E^{n+1}\bu_{c,h}^{n+1}-\frac{\rho_\phi}{\Pe}m_E^{n+1}\nabla\mu_h^{n+1},
 \qquad \rho_E^{n+1}=\rho_0+\rho_\phi\phi_E^{n+1},
\end{equation}
and
\begin{align}
 C_{\rho,h}^{n+1}(\bv_h,\boldsymbol z_h)
 =\frac12\Bigl[{}&\left\langle(\bJ_{\rho,h}^{n+1}-\rho_h^{n+1}\bw_h^{n+1,(k)})\cdot\nabla\bv_h,\boldsymbol z_h\right\rangle_\times\notag\\
 &-\left\langle(\bJ_{\rho,h}^{n+1}-\rho_h^{n+1}\bw_h^{n+1,(k)})\cdot\nabla\boldsymbol z_h,\bv_h\right\rangle_\times\Bigr],
 \qquad \bv_h,\boldsymbol z_h\in\Vh^{n+1}.
\end{align}
Define
\begin{equation}
 E_{\phi,r,k}^{\ell}=\begin{cases}
 \dfrac{1}{2\We\Cn}\norm{\phi_h^\ell}_{\Cn,s}^2+\dfrac{1}{\We\Cn}(r^\ell)^2+C_A,&k=1,\\[6pt]
 \dfrac{1}{4\We\Cn}\left[\norm{\phi_h^\ell}_{\Cn,s}^2+\norm{2\phi_h^\ell-\phi_h^{\ell-1}}_{\Cn,s}^2\right]
 +\dfrac{1}{2\We\Cn}\left[(r^\ell)^2+(2r^\ell-r^{\ell-1})^2\right]+C_A,&k=2.
 \end{cases}
\end{equation}

\subsection{Saddle-point scheme}
\noindent\textbf{Step 4. Momentum--pressure solve and determination of $q$.} Find $(\bu_h^{n+1},p_h^{n+1},q^{n+1})\in\Vh^{n+1}\times P_{h,0}^{n+1}\times\R_{>0}$ such that, for all $\bv_h\in\Vh^{n+1}$ and $\alpha_h\in P_{h,0}^{n+1}$,
\begin{subequations}\label{eq:saddle}
\begin{align}
 &\frac{\gamma_k}{\tau}\left((\rho_h^{n+1})^{1/2}\bu_h^{n+1}-\bzeta_{H,h}^{n+1},(\rho_h^{n+1})^{1/2}\bv_h\right)_{n+1}
 +C_{\rho,h}^{n+1}(\bu_h^{n+1},\bv_h)
 +\frac{2}{\Rey}(\eta_h^{n+1}D(\bu_h^{n+1}),D(\bv_h))_{n+1}\notag\\
 &\hspace{5em}-(p_h^{n+1},\nabla\cdot\bv_h)_{n+1}
 =(\bfv^{n+1},\bv_h)_{n+1}-\frac{q^{n+1}}{\We\Cn}(\phi_h^{n+1}\nabla\mu_h^{n+1},\bv_h)_{n+1},\label{eq:saddle-a}\\
 &(\nabla\cdot\bu_h^{n+1},\alpha_h)_{n+1}=0,\label{eq:saddle-b}\\
 &\frac{\gamma_kq^{n+1}}{\alpha_q\tau}(q^{n+1}-q_H^{n+1})
 +\frac{q^{n+1}-q^n}{\tau}E_{\phi,r,k}^{n}
 +\frac{q^{n+1}}{\We\Cn}\langle\phi_E^{n+1}\bu_{c,h}^{n+1},\nabla\mu_h^{n+1}\rangle_\times
 -\frac{q^{n+1}}{\We\Cn}(\phi_h^{n+1}\nabla\mu_h^{n+1},\bu_h^{n+1})_{n+1}=0.\label{eq:saddle-c}
\end{align}
\end{subequations}

\noindent\textbf{Step 5. Kinetic-history update.} Set
\begin{equation*}
 \bg_{0,h}^{n+1}=\bzeta_h^{n+1}.
\end{equation*}
When the BE start-up is followed by BDF2, initialize
\begin{equation*}
 \bg_{1,h}^{1}=2\bzeta_h^1-\bg_{0,H}^{1};
\end{equation*}
at every BDF2 level set
\begin{equation*}
 \bg_{1,h}^{n+1}=2\bzeta_h^{n+1}-\bg_{0,H}^{n+1}.
\end{equation*}

\subsection{Pressure-correction scheme}
For projection and pressure-correction methods for incompressible and variable-density flows, see \cite{GuermondMinevShen2006,GuermondQuartapelle2000,GuermondSalgado2009}. Steps 1--3 are unchanged. Prescribe $\pi_h^0\in P_{h,0}^0$. For this branch, $\nabla_{\rho,h}^{n+1}$ denotes the discrete pressure gradient determined by
\begin{equation}
 (\rho_h^{n+1}\nabla_{\rho,h}^{n+1}\pi_h,\bv_h)_{n+1}=-(\pi_h,\nabla\cdot\bv_h)_{n+1},
 \qquad \pi_h\in P_{h,0}^{n+1},\ \bv_h\in\Vh^{n+1}.
\end{equation}

\noindent\textbf{Step 4. Pressure-history representation.} Set
\begin{equation}
 \boldsymbol\xi_{p,h}^{n+1}=\begin{cases}
 (J_h^{n+1})^{-1/2}(J_h^n)^{1/2}(\rho_h^n)^{1/2}\nabla_{\rho,h}^n\pi_h^n\circ \bX_h^n\circ(\bX_h^{n+1})^{-1},&\text{fixed topology},\\
 (\rho_h^n)^{1/2}\nabla_{\rho,h}^n\pi_h^n,&\text{topology change}.
 \end{cases}
\end{equation}
Find $\pi_{\rm tr,h}^{n+1},\bar\pi_h^{n+1}\in P_{h,0}^{n+1}$ such that, for all $\alpha_h\in P_{h,0}^{n+1}$,
\begin{subequations}
\begin{align}
 &(\rho_h^{n+1}\nabla_{\rho,h}^{n+1}\pi_{\rm tr,h}^{n+1},\nabla_{\rho,h}^{n+1}\alpha_h)_{n+1}
 =\left\langle\boldsymbol\xi_{p,h}^{n+1},(\rho_h^{n+1})^{1/2}\nabla_{\rho,h}^{n+1}\alpha_h\right\rangle_\times,\label{eq:p-hist-a}\\
 &(\rho_h^{n+1}\nabla_{\rho,h}^{n+1}\bar\pi_h^{n+1},\nabla_{\rho,h}^{n+1}\alpha_h)_{n+1}
 =\left\langle\boldsymbol\xi_{p,h}^{n+1}+\frac{\gamma_k}{\tau}\bzeta_{H,h}^{n+1},(\rho_h^{n+1})^{1/2}\nabla_{\rho,h}^{n+1}\alpha_h\right\rangle_\times.\label{eq:p-hist-b}
\end{align}
\end{subequations}

\noindent\textbf{Step 5. Tentative velocity and determination of $q$.} Find the tentative response $\bu_{*,h}^{n+1}(q)\in\Vh^{n+1}$ and determine $q^{n+1}>0$ from the unified system
\begin{subequations}\label{eq:pc-tentative}
\begin{align}
 &\frac{\gamma_k}{\tau}\left((\rho_h^{n+1})^{1/2}\bu_{*,h}^{n+1}(q)-\bzeta_{H,h}^{n+1},(\rho_h^{n+1})^{1/2}\bv_h\right)_{n+1}
 +C_{\rho,h}^{n+1}(\bu_{*,h}^{n+1}(q),\bv_h)
 +\frac{2}{\Rey}(\eta_h^{n+1}D(\bu_{*,h}^{n+1}(q)),D(\bv_h))_{n+1}\notag\\
 &\qquad=(\bar\pi_h^{n+1},\nabla\cdot\bv_h)_{n+1}+(\bfv^{n+1},\bv_h)_{n+1}
 -\frac{q}{\We\Cn}(\phi_h^{n+1}\nabla\mu_h^{n+1},\bv_h)_{n+1},
 \qquad\forall\bv_h\in\Vh^{n+1},\label{eq:pc-tentative-a}\\
 &\frac{\gamma_kq^{n+1}}{\alpha_q\tau}(q^{n+1}-q_H^{n+1})
 +\frac{q^{n+1}-q^n}{\tau}E_{\phi,r,k}^{n}
 +\frac{q^{n+1}}{\We\Cn}\langle\phi_E^{n+1}\bu_{c,h}^{n+1},\nabla\mu_h^{n+1}\rangle_\times\notag\\
 &\qquad-\frac{q^{n+1}}{\We\Cn}(\phi_h^{n+1}\nabla\mu_h^{n+1},\bu_{*,h}^{n+1}(q^{n+1}))_{n+1}=0.\label{eq:pc-tentative-b}
\end{align}
\end{subequations}
Set $\bu_{*,h}^{n+1}=\bu_{*,h}^{n+1}(q^{n+1})$.

\noindent\textbf{Step 6. Pressure--velocity correction and history update.} Find $\delta\pi_h^{n+1}\in P_{h,0}^{n+1}$ and set $\bu_h^{n+1},\pi_h^{n+1}$ by
\begin{subequations}
\begin{align}
 &(\rho_h^{n+1}\nabla_{\rho,h}^{n+1}\delta\pi_h^{n+1},\nabla_{\rho,h}^{n+1}\alpha_h)_{n+1}
 =-\frac{\gamma_k}{\tau}(\nabla\cdot\bu_{*,h}^{n+1},\alpha_h)_{n+1},
 \qquad\forall\alpha_h\in P_{h,0}^{n+1},\label{eq:pc-corr-a}\\
 &\bu_h^{n+1}=\bu_{*,h}^{n+1}-\frac{\tau}{\gamma_k}\nabla_{\rho,h}^{n+1}\delta\pi_h^{n+1},
 \qquad \pi_h^{n+1}=\pi_{\rm tr,h}^{n+1}+\delta\pi_h^{n+1}.\label{eq:pc-corr-b}
\end{align}
\end{subequations}
Finally, set $\bg_{0,h}^{n+1}=\bzeta_h^{n+1}$. When the BE start-up is followed by BDF2, initialize $\bg_{1,h}^{1}=2\bzeta_h^1-\bg_{0,H}^{1}$; at every BDF2 level set $\bg_{1,h}^{n+1}=2\bzeta_h^{n+1}-\bg_{0,H}^{n+1}$.

\begin{remark}[Fixed-topology reference correction and Piola map]\label{rem:piola-carrier}
On a fixed-topology interval, let $\widehat S_h$ be the scalar finite element space on the common reference triangulation $\widehat{\mathcal T}_h$. Pull each accepted velocity back by the inverse contravariant Piola map,
\begin{equation*}
 \widehat{\bu}_h^m=J_h^m(\bF_h^m)^{-1}(\bu_h^m\circ\bX_h^m),
 \qquad m=n\ \text{and, for BDF2, }m=n-1,
\end{equation*}
and form $\widehat{\bu}_E^{\,n+1}$ by the same BE/BDF2 extrapolation. Find $\widehat\lambda_h^{\,n+1}\in\widehat S_{h,0}$ from
\begin{equation*}
 (\widehat\nabla\widehat\lambda_h^{\,n+1},\widehat\nabla\widehat\chi_h)_{\widehat\Omega}
 =(\widehat{\bu}_E^{\,n+1},\widehat\nabla\widehat\chi_h)_{\widehat\Omega},
 \qquad\forall\widehat\chi_h\in\widehat S_{h,0},
\end{equation*}
and push the corrected carrier to the current mesh:
\begin{equation}
 \bu_{c,h}^{n+1}\circ\bX_h^{n+1}
 =\frac1{J_h^{n+1}}\bF_h^{n+1}
 \bigl(\widehat{\bu}_E^{\,n+1}-\widehat\nabla\widehat\lambda_h^{\,n+1}\bigr).
\end{equation}
For $\widehat\chi_h=\chi_h\circ\bX_h^{n+1}\in\widehat S_h$, using its zero-mean representative in the correction equation, change of variables and the Piola identity give
\begin{equation*}
 (\bu_{c,h}^{n+1},\nabla\chi_h)_{n+1}
 =(\widehat{\bu}_E^{\,n+1}-\widehat\nabla\widehat\lambda_h^{\,n+1},\widehat\nabla\widehat\chi_h)_{\widehat\Omega}=0.
\end{equation*}
Thus the required compatibility is enforced directly in the phase space, with no inclusion relation required between $S_h^\ell$ and $P_h^\ell$. For smooth fixed-topology maps and histories, the reference-space BE/BDF2 extrapolation retains first- and second-order temporal consistency. After topology-changing remeshing, Step 2 uses the current-space correction instead.
\end{remark}

\section{Structural properties and modified energy analysis}
This section proves discrete phase conservation, the affine-density balance, stepwise unique solvability, positivity of $q$, and the modified energy balances for the saddle-point and pressure-correction schemes. It also records the scalar-defect identity and the local consistency with the continuous state $q=1$. Throughout the analysis, accepted states satisfy
\begin{equation*}
 0<\rho_*\le\rho(\phi_h^\ell)\le\rho^*,\qquad
 0<\eta_*\le\eta(\phi_h^\ell)\le\eta^*.
\end{equation*}

\subsection{Discrete phase conservation and affine-density balance}
\begin{theorem}[Discrete mass conservation]\label{thm:mass}
Let $\phi_{H,h}^{n+1}$ be defined by (2.11), let $\bu_{c,h}^{n+1}$ satisfy the carrier compatibility of Step 2, and define $\bJ_{\rho,h}^{n+1}$ by (3.6). This includes the fixed-topology reference correction and Piola map of Remark~\ref{rem:piola-carrier}. Then
\begin{equation}
 \left(\frac{\gamma_k}{\tau}(\phi_h^{n+1}-\phi_{H,h}^{n+1}),1\right)_{n+1}=0,
\end{equation}
and, with the BE start-up, $(\phi_h^{n+1},1)_{n+1}=(\phi_h^n,1)_n$ at every accepted time level. Since $\rho_h^\ell=\rho_0+\rho_\phi\phi_h^\ell$ on the fixed physical domain, the total discrete density is conserved as well,
\begin{equation*}
 (\rho_h^{n+1},1)_{n+1}=(\rho_h^n,1)_n.
\end{equation*}
Moreover, for every $\omega_h\in\Sh^{n+1}$,
\begin{equation}
 \left(\frac{\gamma_k}{\tau}[\rho_h^{n+1}-(\rho_0+\rho_\phi\phi_{H,h}^{n+1})],\omega_h\right)_{n+1}
 -\langle\bJ_{\rho,h}^{n+1},\nabla\omega_h\rangle_\times=0.
\end{equation}
\end{theorem}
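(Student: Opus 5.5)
The plan is to test the phase equation \eqref{eq:phase-sav-a} with the constant function $\omega_h=1\in\Sh^{n+1}$, which is admissible because $1\in\Sh^\ell$ by assumption. Since $\nabla 1=0$, both the transport pairing $\langle\phi_E^{n+1}\bu_{c,h}^{n+1},\nabla 1\rangle_\times$ and the diffusive pairing $\Pe^{-1}\langle m_E^{n+1}\nabla\mu_h^{n+1},\nabla1\rangle_\times$ vanish. This gives the first identity immediately.

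Next I will convert it into level-to-level conservation. Testing the Ritz-type history definition \eqref{eq:phase-history} with $\chi_h=1$ gives $(\phi_{H,h}^{n+1},1)_{n+1}=\langle\phi_H^{n+1},1\rangle_\times$, as already recorded after \eqref{eq:phase-history}. For $k=1$ we have $\phi_H^{n+1}=\phi_h^n$ as a physical function, so $(\phi_h^{n+1},1)_{n+1}=\int_\Omega\phi_h^n\,dx=(\phi_h^n,1)_n$. Here I use that the domain $\Omega$ is fixed, so cross-mesh integrals of a single field are just its physical integral.

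For $k=2$ the same computation gives
\[
(\phi_h^{n+1},1)_{n+1}=\tfrac43(\phi_h^n,1)_n-\tfrac13(\phi_h^{n-1},1)_{n-1}.
\]
I will argue by induction. The BE start-up gives $(\phi_h^1,1)_1=(\phi_h^0,1)_0$. If $(\phi_h^n,1)_n=(\phi_h^{n-1},1)_{n-1}=M$, then the right-hand side equals $M$. Density conservation then follows from linearity: $(\rho_h^\ell,1)_\ell=\rho_0|\Omega|+\rho_\phi(\phi_h^\ell,1)_\ell$, and $|\Omega|$ does not depend on $\ell$.

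For the affine density balance, I will multiply \eqref{eq:phase-sav-a} by $\rho_\phi$. The difference $\rho_h^{n+1}-(\rho_0+\rho_\phi\phi_{H,h}^{n+1})$ equals $\rho_\phi(\phi_h^{n+1}-\phi_{H,h}^{n+1})$, so the time-derivative terms match. It then remains to check that
\[
-\rho_\phi\langle\phi_E^{n+1}\bu_{c,h}^{n+1},\nabla\omega_h\rangle_\times+\frac{\rho_\phi}{\Pe}\langle m_E^{n+1}\nabla\mu_h^{n+1},\nabla\omega_h\rangle_\times=-\langle\bJ_{\rho,h}^{n+1},\nabla\omega_h\rangle_\times .
\]
Using (3.6), the diffusive part of $\bJ_{\rho,h}^{n+1}$ matches exactly. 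The advective part is $\rho_E^{n+1}\bu_{c,h}^{n+1}=\rho_0\bu_{c,h}^{n+1}+\rho_\phi\phi_E^{n+1}\bu_{c,h}^{n+1}$, so there is an extra term $\rho_0\langle\bu_{c,h}^{n+1},\nabla\omega_h\rangle_\times$. This term vanishes by the carrier compatibility of Step 2, $\langle\bu_{c,h}^{n+1},\nabla\chi_h\rangle_\times=0$ for all $\chi_h\in\Sh^{n+1}$.

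The main obstacle is making sure this compatibility really holds for all of $\Sh^{n+1}$ and not only for $S_{h,0}^{n+1}$, and that it holds in the fixed-topology case as well. For the topology-changing case, any $\chi_h$ differs from its zero-mean part by a constant, whose gradient is zero. For the fixed-topology case I will invoke Remark~\ref{rem:piola-carrier}, where the Piola change of variables transfers the orthogonality from the reference space. With these checks, the argument is complete.
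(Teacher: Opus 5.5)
Your proposal is correct and follows the paper's proof step for step. You test \eqref{eq:phase-sav-a} with $\omega_h=1$, use the mean-preserving Ritz history with $\chi_h=1$ to get the BE/BDF2 mass recursions, and multiply \eqref{eq:phase-sav-a} by $\rho_\phi$, splitting $\rho_E^{n+1}\bu_{c,h}^{n+1}=\rho_0\bu_{c,h}^{n+1}+\rho_\phi\phi_E^{n+1}\bu_{c,h}^{n+1}$ and removing the $\rho_0$ term by carrier compatibility. Your explicit induction from the BE start-up, and your check that compatibility extends from $S_{h,0}^{n+1}$ to all of $\Sh^{n+1}$ (including through the Piola map), spell out details the paper leaves implicit.
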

\begin{proof}
By the carrier compatibility of Step 2, with the fixed-topology case justified by Remark~\ref{rem:piola-carrier}, taking $\omega_h=1$ in (3.4a) gives (4.1). By the mean-preserving Ritz history (2.11),
\begin{equation*}
 (\phi_{H,h}^{n+1},1)_{n+1}=\begin{cases}
 (\phi_h^n,1)_n,&k=1,\\[2pt]
 \dfrac{4(\phi_h^n,1)_n-(\phi_h^{n-1},1)_{n-1}}{3},&k=2,
 \end{cases}
\end{equation*}
so that
\begin{equation*}
 \begin{cases}
 (\phi_h^{n+1},1)_{n+1}-(\phi_h^n,1)_n=0,&k=1,\\
 3(\phi_h^{n+1},1)_{n+1}-4(\phi_h^n,1)_n+(\phi_h^{n-1},1)_{n-1}=0,&k=2.
 \end{cases}
\end{equation*}
For arbitrary $\omega_h\in\Sh^{n+1}$, multiply (3.4a) by $\rho_\phi$ and use
\begin{equation*}
 \rho_h^{n+1}-(\rho_0+\rho_\phi\phi_{H,h}^{n+1})=\rho_\phi(\phi_h^{n+1}-\phi_{H,h}^{n+1}),
 \qquad \rho_\phi\phi_E^{n+1}\bu_{c,h}^{n+1}=\rho_E^{n+1}\bu_{c,h}^{n+1}-\rho_0\bu_{c,h}^{n+1},
\end{equation*}
\begin{equation*}
 \bJ_h^{n+1}=-\frac{\rho_\phi}{\Pe}m_E^{n+1}\nabla\mu_h^{n+1},
 \qquad \rho_0\langle\bu_{c,h}^{n+1},\nabla\omega_h\rangle_\times=0.
\end{equation*}
Therefore,
\begin{align*}
 0={}&\left(\frac{\gamma_k}{\tau}[\rho_h^{n+1}-(\rho_0+\rho_\phi\phi_{H,h}^{n+1})],\omega_h\right)_{n+1}
 -\langle\rho_E^{n+1}\bu_{c,h}^{n+1},\nabla\omega_h\rangle_\times
 +\frac{\rho_\phi}{\Pe}\langle m_E^{n+1}\nabla\mu_h^{n+1},\nabla\omega_h\rangle_\times\\
={}&\left(\frac{\gamma_k}{\tau}[\rho_h^{n+1}-(\rho_0+\rho_\phi\phi_{H,h}^{n+1})],\omega_h\right)_{n+1}
 -\langle\rho_E^{n+1}\bu_{c,h}^{n+1}+\bJ_h^{n+1},\nabla\omega_h\rangle_\times\\
={}&\left(\frac{\gamma_k}{\tau}[\rho_h^{n+1}-(\rho_0+\rho_\phi\phi_{H,h}^{n+1})],\omega_h\right)_{n+1}
 -\langle\bJ_{\rho,h}^{n+1},\nabla\omega_h\rangle_\times,
\end{align*}
which is (4.2).
\end{proof}

\begin{remarkstar}[Momentum-transport neutrality]
The same $\bJ_{\rho,h}^{n+1}$ is used in (3.7); therefore
\begin{equation}
 C_{\rho,h}^{n+1}(\bv_h,\bv_h)=0,\qquad\forall\bv_h\in\Vh^{n+1},
\end{equation}
by direct cancellation of the two exchanged terms. Thus the mass-consistent flux produces no kinetic self-work.
\end{remarkstar}

\subsection{Stepwise unique solvability and positivity of \texorpdfstring{$q$}{q}}
For a phase state obtained from the CH--SAV update, the saddle-point momentum solution depends affinely on $q$,
\begin{equation}\label{eq:q-affine-response}
 \bu_h(q)=\bu_{0,h}^{n+1}+q\bu_{1,h}^{n+1},\qquad
 p_h(q)=p_{0,h}^{n+1}+qp_{1,h}^{n+1},
\end{equation}
and the tentative velocity has the analogous representation
\begin{equation*}
 \bu_{*,h}(q)=\bu_{*,0,h}^{n+1}+q\bu_{*,1,h}^{n+1}.
\end{equation*}
In both branches, the unit capillary response satisfies
\begin{equation}\label{eq:q-unit-response}
 \frac{\gamma_k}{\tau}\norm{(\rho_h^{n+1})^{1/2}\bu_{1,h}^{n+1}}_{n+1}^2
 +\frac{2}{\Rey}\norm{(\eta_h^{n+1})^{1/2}D(\bu_{1,h}^{n+1})}_{n+1}^2
 =-\frac1{\We\Cn}(\phi_h^{n+1}\nabla\mu_h^{n+1},\bu_{1,h}^{n+1})_{n+1}.
\end{equation}
In the pressure-correction branch, $\bu_{1,h}^{n+1}$ denotes the unit capillary response of the tentative velocity. Substitution of the affine response and \eqref{eq:q-unit-response} into the corresponding scalar equation gives
\begin{equation}\label{eq:q-quadratic}
 A_q(q^{n+1})^2+B_qq^{n+1}-C_q=0.
\end{equation}
For the saddle-point branch,
\begin{equation}\label{eq:q-coefficients}
 \begin{aligned}
 A_q={}&\frac{\gamma_k}{\alpha_q}
 +\gamma_k\norm{(\rho_h^{n+1})^{1/2}\bu_{1,h}^{n+1}}_{n+1}^2
 +\frac{2\tau}{\Rey}\norm{(\eta_h^{n+1})^{1/2}D(\bu_{1,h}^{n+1})}_{n+1}^2,\\
 B_q={}&-\frac{\gamma_k}{\alpha_q}q_H^{n+1}+E_{\phi,r,k}^{n}
 +\frac{\tau}{\We\Cn}\langle\phi_E^{n+1}\bu_{c,h}^{n+1},\nabla\mu_h^{n+1}\rangle_\times
 -\frac{\tau}{\We\Cn}(\phi_h^{n+1}\nabla\mu_h^{n+1},\bu_{0,h}^{n+1})_{n+1},\\
 C_q={}&q^nE_{\phi,r,k}^{n}.
 \end{aligned}
\end{equation}
For the pressure-correction branch one replaces $\bu_{0,h}^{n+1},\bu_{1,h}^{n+1}$ by the corresponding tentative-velocity responses; $A_q>0$ and $C_q=q^nE_{\phi,r,k}^{n}$ are unchanged.

\begin{assumption}[Stepwise admissibility]\label{ass:stepwise}
The current accepted step satisfies: (i) the mesh and history transfers of Sections 2.2--2.4 are admissible, $0<\rho_*\le\rho_h^{n+1}\le\rho^*$, $0<\eta_*\le\eta_h^{n+1}\le\eta^*$, $s>0$, $m_E^{n+1}\ge0$, $C_A>0$, and $q^n>0$; (ii) the saddle-point pair satisfies the discrete inf--sup condition; (iii) the pressure-correction density-weighted pressure form is positive definite on $P_{h,0}^{n+1}$.
\end{assumption}

\begin{theorem}[Stepwise unique solvability and positivity of $q$]\label{thm:solvability}
Under Assumption~\ref{ass:stepwise}, one BE/BDF2 step of either formulation has a unique discrete solution, and \eqref{eq:q-quadratic} has exactly one positive root $q^{n+1}$.
\end{theorem}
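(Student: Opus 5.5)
The argument follows the order of the solve: the linear CH--SAV system of Step 3, the affine momentum response in $q$, and finally the scalar quadratic \eqref{eq:q-quadratic}.

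\textbf{Step 3.} Since it is a square linear system in $(\phi_h^{n+1},\mu_h^{n+1},r^{n+1})$, it suffices to show that the homogeneous problem has only the trivial solution. In the homogeneous problem, histories, extrapolated data, the carrier term and $r$'s history term are set to zero. Test \eqref{eq:phase-sav-a} with $\omega_h=\mu_h$. Test \eqref{eq:phase-sav-b} with $\chi_h=\frac{\gamma_k}{\tau}\phi_h$. Multiply \eqref{eq:phase-sav-c} by $2r$. Adding the three relations, the SAV coupling terms $\frac{r}{U_E}\langle G'(\phi_E),\frac{\gamma_k}{\tau}\phi_h\rangle_\times$ cancel, and we obtain
\begin{equation*}
\frac{\gamma_k}{\tau}\norm{\phi_h}_{\Cn,s}^2+\frac{2\gamma_k}{\tau}r^2+\frac1\Pe\norm{(m_E^{n+1})^{1/2}\nabla\mu_h}^2=0.
\end{equation*}
Here $m_E^{n+1}\ge0$ and $s>0$ are used. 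Hence $\phi_h=0$ and $r=0$, and then \eqref{eq:phase-sav-b} gives $\mu_h=0$. The cross-mesh pairings cause no difficulty because the unknowns all live on $\Sh^{n+1}$; only data are physical functions on older meshes.

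\textbf{Affine momentum response.} For fixed $q$, the saddle-point problem \eqref{eq:saddle-a}--\eqref{eq:saddle-b} is a Stokes-type system. Its velocity form is coercive on $\Vh^{n+1}$:
\begin{equation*}
\frac{\gamma_k}{\tau}\norm{(\rho_h^{n+1})^{1/2}\bv}^2+\frac2\Rey\norm{(\eta_h^{n+1})^{1/2}D(\bv)}^2\ge \frac{\gamma_k\rho_*}{\tau}\norm{\bv}^2 .
\end{equation*}
This uses $C_{\rho,h}^{n+1}(\bv,\bv)=0$ from the momentum-neutrality remark. Brezzi's theory with the inf--sup condition of Assumption~\ref{ass:stepwise}(ii) then gives unique $(\bu_h,p_h)$. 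Because $q$ enters only the right-hand side linearly, the response has the form \eqref{eq:q-affine-response}.

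For the pressure-correction branch, the same coercivity argument gives the following:
\begin{itemize}[nosep]
\item \eqref{eq:p-hist-a}, \eqref{eq:p-hist-b} and \eqref{eq:pc-corr-a} are uniquely solvable by Assumption~\ref{ass:stepwise}(iii);
\item \eqref{eq:pc-tentative-a} is uniquely solvable;
\item the correction \eqref{eq:pc-corr-b} is explicit.
\end{itemize}

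\textbf{Unit response.} Testing the unit-response equation (data zero, $q=1$) with $\bv_h=\bu_{1,h}^{n+1}$ gives \eqref{eq:q-unit-response}. In the saddle-point branch, the pressure term drops because $\bu_{1,h}^{n+1}$ is discretely divergence-free. In the tentative branch, $\bar\pi$ and $\bfv$ belong to the $q$-independent part, so no pressure term appears.

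\textbf{Scalar quadratic.} Substitute the affine response into \eqref{eq:saddle-c} (respectively \eqref{eq:pc-tentative-b}) and multiply by $\tau$. The term $-\frac{q}{\We\Cn}(\phi_h\nabla\mu_h,q\bu_{1,h})$ is quadratic in $q$, and by \eqref{eq:q-unit-response} it equals $q^2\bigl[\gamma_k\norm{\rho^{1/2}\bu_1}^2+\frac{2\tau}\Rey\norm{\eta^{1/2}D\bu_1}^2\bigr]$. Collecting terms gives \eqref{eq:q-quadratic} with the coefficients \eqref{eq:q-coefficients}, and therefore $A_q\ge\gamma_k/\alpha_q>0$.

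The key remaining point, which I expect to be the main obstacle, is $C_q=q^nE_{\phi,r,k}^n>0$. This needs $E^n_{\phi,r,k}\ge C_A>0$, which holds because every other term in it is a square (using $s>0$), together with $q^n>0$. The quantity $B_q$ has indefinite sign, but that does not matter. Since $A_q>0$ and $C_q>0$, the product of the roots is $-C_q/A_q<0$ and the discriminant $B_q^2+4A_qC_q$ is positive. So the quadratic has two real roots of opposite sign, and exactly one is positive:
\begin{equation*}
q^{n+1}=\frac{-B_q+\sqrt{B_q^2+4A_qC_q}}{2A_q}.
\end{equation*}

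With $q^{n+1}\in\R_{>0}$ fixed, the velocity, pressure and corrected fields are determined uniquely by the affine representation. The kinetic-history updates of Step 5/6 are explicit, which completes the step.
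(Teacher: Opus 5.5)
Your proposal is correct and follows the paper's proof essentially step by step. The shared steps are the homogeneous CH--SAV energy identity, which gives $\delta\phi_h=0$ and $\delta r=0$ and then $\delta\mu_h=0$ from the chemical-potential equation (so no lower bound on $m_E^{n+1}$ is needed); coercivity plus the inf--sup condition for the $q$-affine momentum responses; positive definiteness of the pressure form in the pressure-correction branch; and the sign argument $A_q>0$, $C_q=q^nE_{\phi,r,k}^n\ge q^nC_A>0$, which forces two real roots of opposite sign.

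The paper's proof also records, and you leave implicit, the unique solvability of the Step~1 Ritz history (coercive because $s>0$) and of the Step~2 zero-mean carrier correction (stiffness coercivity on $S_{h,0}^{n+1}$, or on the reference space before the Piola pushforward). In your homogeneous CH--SAV problem, only the source terms built from $\phi_E^{n+1}$ vanish; the coefficients $m_E^{n+1}$, $G'(\phi_E^{n+1})$ and $U_E^{n+1}$ remain, as your computation in fact assumes.
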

\begin{proof}
The phase-energy Ritz history is unique because $s>0$, and the zero-mean carrier correction is unique by scalar stiffness coercivity, either on the current mesh after remeshing or on the fixed reference mesh before the Piola pushforward. Let $(\delta\phi_h,\delta\mu_h,\delta r)$ be the difference of two CH--SAV solutions. From the homogeneous form of (3.4c),
\begin{equation*}
 \delta r=\frac1{2U_E^{n+1}}\langle G'(\phi_E^{n+1}),\delta\phi_h\rangle_\times.
\end{equation*}
Testing the homogeneous forms of (3.4a)--(3.4b) by $\delta\mu_h$ and $\delta\phi_h$ and combining with this identity gives
\begin{equation*}
 \frac{\gamma_k}{\tau}\norm{\delta\phi_h}_{\Cn,s}^2
 +\frac{2\gamma_k}{\tau}|\delta r|^2
 +\frac1\Pe\langle m_E^{n+1}\nabla\delta\mu_h,\nabla\delta\mu_h\rangle_\times=0,
\end{equation*}
Since $s>0$ and $m_E^{n+1}\ge0$, this identity first gives $\delta\phi_h=0$ and $\delta r=0$. Returning to the homogeneous chemical-potential equation (3.4b) then yields
\begin{equation*}
 (\delta\mu_h,\chi_h)_{n+1}=0,\qquad\forall\chi_h\in\Sh^{n+1},
\end{equation*}
so $\delta\mu_h=0$. This final step does not require a positive lower bound for $m_E^{n+1}$; hence the argument remains valid for degenerate mobilities.

For the homogeneous saddle-point response,
\begin{equation*}
 \frac{\gamma_k}{\tau}\norm{(\rho_h^{n+1})^{1/2}\delta\bu_h}_{n+1}^2
 +\frac{2}{\Rey}\norm{(\eta_h^{n+1})^{1/2}D(\delta\bu_h)}_{n+1}^2=0,
\end{equation*}
so $\delta\bu_h=0$, and the discrete inf--sup condition yields $\delta p_h=0$. The tentative-velocity response has the same coercive mass--viscous--skew identity, while (3.12) and (3.14) are unique by positive definiteness of the current pressure form.

Finally, $E_{\phi,r,k}^{n}\ge C_A>0$ and $q^n>0$ imply
\begin{equation*}
 A_q>0,\qquad C_q=q^nE_{\phi,r,k}^{n}>0,\qquad B_q^2+4A_qC_q>0,
 \qquad q_+q_-=-\frac{C_q}{A_q}<0.
\end{equation*}
Hence \eqref{eq:q-quadratic} has exactly one positive root,
\begin{equation}\label{eq:q-positive-root}
 q^{n+1}=\frac{-B_q+\sqrt{B_q^2+4A_qC_q}}{2A_q}>0.
\end{equation}
Once the positive root is fixed, \eqref{eq:q-affine-response}, or the corresponding affine tentative-velocity response, uniquely reconstructs the velocity. The pressure-correction branch then completes the pressure update and the discretely divergence-free velocity correction through (3.14). Thus the entire time step is uniquely solvable. Starting from $q^0=1$, induction gives
\begin{equation}\label{eq:q-positive-induction}
 q^n>0,\qquad 0\le n\le N,
\end{equation}
for every finite accepted time level $N$.
\end{proof}

\begin{proposition}[Scalar-defect identity and local consistency]\label{prop:q-consistency}
Let $q^{n+1}>0$ be the positive scalar produced by Theorem~\ref{thm:solvability}. For the saddle-point branch define
\begin{equation*}
 R_q^{n+1}:=\frac1{\We\Cn}\left[
 \langle\phi_E^{n+1}\bu_{c,h}^{n+1},\nabla\mu_h^{n+1}\rangle_\times
 -(\phi_h^{n+1}\nabla\mu_h^{n+1},\bu_h^{n+1})_{n+1}\right],
\end{equation*}
and for the pressure-correction branch replace the terminal velocity by $\bu_{*,h}^{n+1}$. Then the scalar update satisfies the exact defect identity
\begin{equation}
 \left(E_{\phi,r,k}^{n}+\frac{\gamma_k}{\alpha_q}q^{n+1}\right)(q^{n+1}-1)
 =E_{\phi,r,k}^{n}(q^n-1)+\frac{\gamma_k}{\alpha_q}q^{n+1}(q_H^{n+1}-1)-\tau q^{n+1}R_q^{n+1}.
\label{eq:q-defect}
\end{equation}
Consequently, if the incoming scalar histories are consistent, $q^n=q_H^{n+1}=1$, then
\begin{equation}
 |q^{n+1}-1|\le\frac{\alpha_q}{\gamma_k}\tau|R_q^{n+1}|.
\label{eq:q-local-bound}
\end{equation}
In particular, under consistent scalar histories, $R_q^{n+1}=0$ implies $q^{n+1}=1$.

In the purely temporal consistency setting, suppose the spatial and cross-mesh pairings are exact, the exact solution is smooth, and the phase extrapolation and compatible carrier satisfy
\begin{equation*}
 \|\phi_E^{n+1}-\phi^{n+1}\|_{L^\infty(\Omega)}
 +\|\bu_c^{n+1}-\bu^{n+1}\|_{L^2(\Omega)}=O(\tau^k).
\end{equation*}
For the pressure-correction branch, assume in addition that the tentative velocity satisfies
\begin{equation*}
 \|\bu_*^{n+1}-\bu^{n+1}\|_{L^2(\Omega)}=O(\tau^k).
\end{equation*}
Assume also
\begin{equation*}
 \|\bu^{n+1}\|_{L^2(\Omega)}+\|\phi_E^{n+1}\|_{L^\infty(\Omega)}
 +\|\nabla\mu^{n+1}\|_{L^2(\Omega)}\le C
\end{equation*}
uniformly in $\tau$. Then $R_q^{n+1}=O(\tau^k)$ and, for consistent incoming scalar histories,
\begin{equation}
 q^{n+1}-1=O(\tau^{k+1}).
\label{eq:q-local-consistency}
\end{equation}
Thus the scalar equation is locally consistent with the continuous branch $q\equiv1$; no global fully discrete error estimate is asserted here.
\end{proposition}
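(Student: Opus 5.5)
The plan is to start from the scalar equation itself, (3.9c), or its pressure-correction counterpart (3.13b). The first step is to recognise that the last two terms there are exactly $q^{n+1}R_q^{n+1}$. In the pressure-correction branch the terminal velocity is replaced by $\bu_{*,h}^{n+1}=\bu_{*,h}^{n+1}(q^{n+1})$, which is precisely the modified definition of $R_q^{n+1}$. Multiplying by $\tau$ then gives
\begin{equation*}
 \frac{\gamma_k}{\alpha_q}q^{n+1}(q^{n+1}-q_H^{n+1})+E_{\phi,r,k}^{n}(q^{n+1}-q^n)+\tau q^{n+1}R_q^{n+1}=0.
\end{equation*}
Next I insert $\pm1$: write $q^{n+1}-q_H^{n+1}=(q^{n+1}-1)-(q_H^{n+1}-1)$ and $q^{n+1}-q^n=(q^{n+1}-1)-(q^n-1)$. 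Rearranging yields \eqref{eq:q-defect} directly, so this part is pure algebra and uses nothing beyond the scheme.

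For \eqref{eq:q-local-bound}, I set $q^n=q_H^{n+1}=1$, which leaves
\begin{equation*}
 \Bigl(E_{\phi,r,k}^{n}+\frac{\gamma_k}{\alpha_q}q^{n+1}\Bigr)(q^{n+1}-1)=-\tau q^{n+1}R_q^{n+1}.
\end{equation*}
Since $E_{\phi,r,k}^n\ge C_A>0$ and $q^{n+1}>0$ by Theorem~\ref{thm:solvability}, the prefactor is at least $\gamma_kq^{n+1}/\alpha_q$. Dividing by it cancels $q^{n+1}$ and gives the bound. The implication "$R_q^{n+1}=0\Rightarrow q^{n+1}=1$" is then immediate.

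For the temporal consistency I use that exact pairings make both terms of $R_q^{n+1}$ ordinary $L^2(\Omega)$ integrals. The continuous identity $(\phi\bu,\nabla\mu)=(\phi\nabla\mu,\bu)$ at $t^{n+1}$ lets me write $\We\Cn\,R_q^{n+1}$ as a sum of differences:
\begin{equation*}
 \bigl((\phi_E^{n+1}-\phi^{n+1})\bu_c^{n+1},\nabla\mu^{n+1}\bigr)+\bigl(\phi^{n+1}(\bu_c^{n+1}-\bu^{n+1}),\nabla\mu^{n+1}\bigr)-\bigl(\phi^{n+1}\nabla\mu^{n+1},\bu^{n+1}_\sharp-\bu^{n+1}\bigr),
\end{equation*}
where $\bu_\sharp$ is the terminal or tentative velocity. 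Each piece is estimated by Hölder's inequality with $L^\infty\times L^2\times L^2$. This uses the assumed $O(\tau^k)$ bounds together with the uniform bounds on $\|\bu\|$, $\|\phi_E\|_{L^\infty}$ and $\|\nabla\mu\|$. I also need $\|\bu_c^{n+1}\|_{L^2}$ bounded, which follows from $\|\bu^{n+1}\|+O(\tau^k)$, and $\|\phi^{n+1}\|_{L^\infty}$ bounded, which follows from $\|\phi_E^{n+1}\|_{L^\infty}+O(\tau^k)$.

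The main obstacle is that the statement does not give an explicit $O(\tau^k)$ rate for the saddle-point terminal velocity. I would read this as part of the temporal-consistency setting, namely that $\bu_h^{n+1}$ is replaced by the exact $\bu^{n+1}$, so the third difference vanishes; only in the pressure-correction branch does it carry the assumed $O(\tau^k)$ error. With $R_q^{n+1}=O(\tau^k)$ established, \eqref{eq:q-local-bound} gives $q^{n+1}-1=O(\tau^{k+1})$, which is \eqref{eq:q-local-consistency}.
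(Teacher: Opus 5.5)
Your proposal is correct and follows essentially the same route as the paper: multiply the scalar equation by $\tau$ and rearrange with $\pm1$ to get the defect identity, then use $E_{\phi,r,k}^n>0$ and $q^{n+1}>0$ for the bound. For consistency you split $\We\Cn R_q^{n+1}$ on the exact state with the saddle-point terminal velocity taken as $\bu^{n+1}$ (as the paper does), plus the extra tentative-velocity term in the pressure-correction branch. The only cosmetic difference is your grouping $(\phi_E^{n+1}-\phi^{n+1})\bu_c^{n+1}+\phi^{n+1}(\bu_c^{n+1}-\bu^{n+1})$ versus the paper's $(\phi_E^{n+1}-\phi^{n+1})\bu^{n+1}+\phi_E^{n+1}(\bu_c^{n+1}-\bu^{n+1})$. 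The paper's grouping uses the stated bounds on $\|\bu^{n+1}\|$ and $\|\phi_E^{n+1}\|_{L^\infty}$ directly and spares you the two derived bounds.
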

\begin{proof}
Multiplying the scalar equation by $\tau$ and rearranging gives \eqref{eq:q-defect}. If $q^n=q_H^{n+1}=1$, division by the positive scalar $q^{n+1}$ yields
\begin{equation*}
 \left(\frac{E_{\phi,r,k}^{n}}{q^{n+1}}+\frac{\gamma_k}{\alpha_q}\right)(q^{n+1}-1)=-\tau R_q^{n+1}.
\end{equation*}
Since $E_{\phi,r,k}^{n}>0$, estimate \eqref{eq:q-local-bound} follows. In particular, $R_q^{n+1}=0$ forces $q^{n+1}=1$.

For the local temporal statement, evaluated on the smooth exact state, the saddle-point exchange defect is
\begin{equation*}
 \We\Cn R_q^{n+1}
 =\left((\phi_E^{n+1}-\phi^{n+1})\bu^{n+1}
 +\phi_E^{n+1}(\bu_c^{n+1}-\bu^{n+1}),\nabla\mu^{n+1}\right).
\end{equation*}
For the pressure-correction branch, the corresponding identity contains the additional term
\begin{equation*}
 \left(\phi^{n+1}(\bu^{n+1}-\bu_*^{n+1}),\nabla\mu^{n+1}\right).
\end{equation*}
The stated consistency and boundedness assumptions, together with the tentative-velocity assumption in the pressure-correction branch, therefore give $R_q^{n+1}=O(\tau^k)$, and \eqref{eq:q-local-bound} yields \eqref{eq:q-local-consistency}.
\end{proof}

\subsection{Modified energy law of the saddle-point scheme}
Define the total modified energy and the polarization remainders. For the BE start-up and BDF2 main step, respectively, set
\begin{subequations}\label{eq:modified-energy}
\begin{align}
 E_{h,1}^{\ell}&=\frac12\norm{\bg_{0,h}^{\ell}}_\ell^2+q^\ell E_{\phi,r,1}^{\ell}+\frac1{2\alpha_q}(q^\ell)^2,\label{eq:modified-energy-a}\\
 E_{h,2}^{\ell}&=\frac14\left[\norm{\bg_{0,h}^{\ell}}_\ell^2+\norm{\bg_{1,h}^{\ell}}_\ell^2\right]
 +q^\ell E_{\phi,r,2}^{\ell}+\frac1{4\alpha_q}\left[(q^\ell)^2+(2q^\ell-q^{\ell-1})^2\right].\label{eq:modified-energy-b}
\end{align}
The corresponding polarization remainders are
\begin{align}
 N_{h,1}^{\ell}&=\frac12\norm{\bzeta_h^\ell-\bg_{0,H}^{\ell}}_\ell^2
 +q^\ell\left[\frac1{2\We\Cn}\norm{\phi_h^\ell-\phi_h^{\ell-1}}_{\Cn,s}^2
 +\frac1{\We\Cn}(r^\ell-r^{\ell-1})^2\right]
 +\frac1{2\alpha_q}(q^\ell-q^{\ell-1})^2,\label{eq:Nh1}\\
 N_{h,2}^{\ell}&=\frac14\norm{\bzeta_h^\ell-\bg_{1,H}^{\ell}}_\ell^2
 +q^\ell\left[\frac1{4\We\Cn}\norm{\phi_h^\ell-2\phi_h^{\ell-1}+\phi_h^{\ell-2}}_{\Cn,s}^2
 +\frac1{2\We\Cn}(r^\ell-2r^{\ell-1}+r^{\ell-2})^2\right]\notag\\
 &\qquad+\frac1{4\alpha_q}(q^\ell-2q^{\ell-1}+q^{\ell-2})^2.\label{eq:Nh2}
\end{align}
\end{subequations}

\begin{theorem}[Modified energy balance of the saddle-point scheme]\label{thm:saddle-energy}
Assume that the current mesh satisfies the assumptions of Section 2.2, the accepted-state bounds above hold, and $q^{n+1}>0$. Then the scheme (3.4)--(3.9) satisfies
\begin{align}\label{eq:saddle-energy-law}
 &E_{h,k}^{n+1}-E_{h,k}^{n}+N_{h,k}^{n+1}
 +\frac{2\tau}{\Rey}\norm{(\eta_h^{n+1})^{1/2}D(\bu_h^{n+1})}_{n+1}^2\notag\\
 &\qquad+\frac{\tau q^{n+1}}{\Pe\We\Cn}\langle m_E^{n+1}\nabla\mu_h^{n+1},\nabla\mu_h^{n+1}\rangle_\times
 =\tau(\bfv^{n+1},\bu_h^{n+1})_{n+1},\qquad k=1,2.
\end{align}
\end{theorem}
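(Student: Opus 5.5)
The plan is to reproduce the proof of Theorem~\ref{thm:continuous-energy} at the discrete level. Each continuous test function is replaced by its discrete analogue, and every time derivative is turned into energy differences using Proposition~\ref{prop:bdf-pol} for $\phi$ and $\bzeta$ and the scalar BE/BDF2 polarization for $r$ and $q$. Throughout I fix $k\in\{1,2\}$ and multiply all equations by $\tau$.

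\textbf{Phase--SAV block.} I test \eqref{eq:phase-sav-a} with $\omega_h=\mu_h^{n+1}$ and \eqref{eq:phase-sav-b} with $\chi_h=\frac{\gamma_k}{\tau}(\phi_h^{n+1}-\phi_{H,h}^{n+1})$, and subtract so that the term $(\frac{\gamma_k}{\tau}(\phi_h^{n+1}-\phi_{H,h}^{n+1}),\mu_h^{n+1})_{n+1}$ cancels. The SAV contribution $\frac{r^{n+1}}{U_E^{n+1}}\langle G'(\phi_E^{n+1}),\chi_h\rangle_\times$ equals $2r^{n+1}D_\tau^{(k)}r^{n+1}$ by \eqref{eq:phase-sav-c}. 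The $\Cn,s$-part is exactly the left-hand side of the phase polarization identity of Proposition~\ref{prop:bdf-pol}. The term $2\tau r^{n+1}D_\tau^{(k)}r^{n+1}$ gets the standard scalar BE/BDF2 polarization, with $r_H^{n+1}$ built from physical scalars, so no transfer is involved. Multiplying by $q^{n+1}/(\We\Cn)$ and adding $C_A$ (which cancels in differences), I expect to obtain
\begin{equation*}
 q^{n+1}\bigl(E_{\phi,r,k}^{n+1}-E_{\phi,r,k}^{n}\bigr)+q^{n+1}[\text{phase/SAV part of }N_{h,k}^{n+1}]
 -\frac{\tau q^{n+1}}{\We\Cn}\langle\phi_E^{n+1}\bu_{c,h}^{n+1},\nabla\mu_h^{n+1}\rangle_\times
 +\frac{\tau q^{n+1}}{\Pe\We\Cn}\langle m_E^{n+1}\nabla\mu_h^{n+1},\nabla\mu_h^{n+1}\rangle_\times=0.
\end{equation*}

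\textbf{Scalar equation.} I multiply \eqref{eq:saddle-c} by $\tau$ and use the exact discrete product rule
\[
q^{n+1}E^{n+1}-q^nE^n=q^{n+1}(E^{n+1}-E^n)+(q^{n+1}-q^n)E^n .
\]
This converts $(q^{n+1}-q^n)E_{\phi,r,k}^n$ together with the block above into $q^{n+1}E_{\phi,r,k}^{n+1}-q^nE_{\phi,r,k}^n$; this is why the scheme evaluates $E_{\phi,r,k}$ at level $n$ in \eqref{eq:saddle-c}. The term $\frac{\gamma_k}{\alpha_q}q^{n+1}(q^{n+1}-q_H^{n+1})$ polarizes into the $q^2$ parts of $E_{h,k}$ and $N_{h,k}$. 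The carrier transport term in \eqref{eq:saddle-c} cancels the one from the phase block, leaving $-\frac{\tau q^{n+1}}{\We\Cn}(\phi_h^{n+1}\nabla\mu_h^{n+1},\bu_h^{n+1})_{n+1}$.

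\textbf{Momentum block.} I test \eqref{eq:saddle-a} with $\tau\bv_h=\tau\bu_h^{n+1}$ and \eqref{eq:saddle-b} with $\alpha_h=p_h^{n+1}$, so the pressure term vanishes. By the momentum-transport neutrality remark, $C_{\rho,h}^{n+1}(\bu_h^{n+1},\bu_h^{n+1})=0$; the skew form kills the mesh-velocity part as well. The inertia term is $\gamma_k(\bzeta_h^{n+1}-\bzeta_{H,h}^{n+1},\bzeta_h^{n+1})_{n+1}$, which \eqref{eq:kinetic-polarization-bdf2} (or its $k=1$ version) turns into the kinetic parts of $E_{h,k}^{n+1}-E_{h,k}^n$ and $N_{h,k}^{n+1}$. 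Here I use Step~5, $\bg_{0,h}^{n+1}=\bzeta_h^{n+1}$, and for $k=2$ the update of $\bg_{1,h}^{n+1}$. The capillary term $+\frac{\tau q^{n+1}}{\We\Cn}(\phi_h^{n+1}\nabla\mu_h^{n+1},\bu_h^{n+1})_{n+1}$ cancels the leftover from the scalar equation. Adding the three blocks gives \eqref{eq:saddle-energy-law}.

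\textbf{Main obstacle.} The hard part is bookkeeping, not analysis. I must confirm that the remainders weighted by $q^{n+1}$ match $N_{h,k}^{n+1}$ exactly as defined with the prefactor $q^\ell$, and that the BDF2 scalar polarizations for $r$ and $q$ produce precisely the $\frac14$ and $\frac12$ weights in \eqref{eq:modified-energy-b}. For $k=2$ I also need to check that the kinetic history quantities $\bg_{1,h}^n$ are those generated at the previous step, so that the telescoping is consistent across meshes. This relies only on the norm preservation $\norm{\bg_{j,H}^{n+1}}_{n+1}=\norm{\bg_{j,h}^n}_n$ already used in Proposition~\ref{prop:bdf-pol}.
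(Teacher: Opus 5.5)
Your proposal is correct and follows essentially the same route as the paper's proof. It uses the same tests $\omega_h=\mu_h^{n+1}$ and $\chi_h=D_\tau^{(k)}\phi_h^{n+1}$ in the CH--SAV block, then multiplies by $\tau q^{n+1}/(\We\Cn)$ and adds $\tau$ times the scalar equation, where the discrete product rule yields $q^{n+1}E_{\phi,r,k}^{n+1}-q^nE_{\phi,r,k}^{n}$. It then tests the momentum equation with $\bu_h^{n+1}$, uses skew-symmetry of $C_{\rho,h}^{n+1}$ and the kinetic polarization of Proposition~\ref{prop:bdf-pol} (which needs only norm preservation of the transfers), and closes with the cancellation of the capillary work.
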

\begin{proof}
We present the details for $k=2$ and set $\gamma_2=3/2$. We first derive the phase--SAV identity, add the scalar equation, and then combine the result with the momentum equation.
The case $k=1$ follows by replacing each BDF2 polarization below with the corresponding two-point BE polarization.
\begin{subequations}\label{eq:saddle-phase-sav-chain}
Choose $\omega_h=\mu_h^{n+1}$ in (3.4a) to obtain
\begin{equation}\label{eq:saddle-phase-sav-a}
 (D_\tau^{(2)}\phi_h^{n+1},\mu_h^{n+1})_{n+1}
 -\langle\phi_E^{n+1}\bu_{c,h}^{n+1},\nabla\mu_h^{n+1}\rangle_\times
 +\frac1\Pe\langle m_E^{n+1}\nabla\mu_h^{n+1},\nabla\mu_h^{n+1}\rangle_\times=0.
\end{equation}
Here $D_\tau^{(2)}\phi_h^{n+1}=\frac{\gamma_2}{\tau}(\phi_h^{n+1}-\phi_{H,h}^{n+1})$. On the other hand, choose $\chi_h=D_\tau^{(2)}\phi_h^{n+1}$ in (3.4b) and use (3.4c) to obtain
\begin{align}\label{eq:saddle-phase-sav-b}
 (\mu_h^{n+1},D_\tau^{(2)}\phi_h^{n+1})_{n+1}
 ={}&\Cn^2(\nabla\phi_h^{n+1},\nabla D_\tau^{(2)}\phi_h^{n+1})_{n+1}
 +s(\phi_h^{n+1},D_\tau^{(2)}\phi_h^{n+1})_{n+1}\notag+\frac{r^{n+1}}{U_E^{n+1}}\langle G'(\phi_E^{n+1}),D_\tau^{(2)}\phi_h^{n+1}\rangle_\times\notag\\
 ={}&\Cn^2(\nabla\phi_h^{n+1},\nabla D_\tau^{(2)}\phi_h^{n+1})_{n+1}
 +s(\phi_h^{n+1},D_\tau^{(2)}\phi_h^{n+1})_{n+1}
 +2r^{n+1}D_\tau^{(2)}r^{n+1}.
\end{align}
Substituting \eqref{eq:saddle-phase-sav-b} into \eqref{eq:saddle-phase-sav-a} gives the complete phase--SAV balance
\begin{align}\label{eq:saddle-phase-sav-c}
 &\Cn^2(\nabla\phi_h^{n+1},\nabla D_\tau^{(2)}\phi_h^{n+1})_{n+1}
 +s(\phi_h^{n+1},D_\tau^{(2)}\phi_h^{n+1})_{n+1}
 +2r^{n+1}D_\tau^{(2)}r^{n+1}\notag\\
 &+\frac1\Pe\langle m_E^{n+1}\nabla\mu_h^{n+1},\nabla\mu_h^{n+1}\rangle_\times
 -\langle\phi_E^{n+1}\bu_{c,h}^{n+1},\nabla\mu_h^{n+1}\rangle_\times = 0.
\end{align}
\end{subequations}
Next multiply \eqref{eq:saddle-phase-sav-c} by $\tau q^{n+1}/(\We\Cn)$ and add $\tau$ times (3.9c). The carrier-exchange terms cancel term by term in the same physical pairing, giving the following identity before the BDF2 polarization is applied:
\begin{subequations}\label{eq:saddle-phase-scalar-chain}
\begin{align}\label{eq:saddle-phase-scalar-a}
 &\frac{\tau q^{n+1}}{\We\Cn}\Bigl[\Cn^2(\nabla\phi_h^{n+1},\nabla D_\tau^{(2)}\phi_h^{n+1})_{n+1}
 +s(\phi_h^{n+1},D_\tau^{(2)}\phi_h^{n+1})_{n+1}\Bigr]
 +\frac{2\tau q^{n+1}}{\We\Cn}r^{n+1}D_\tau^{(2)}r^{n+1}\notag\\
 &+\frac{\gamma_2q^{n+1}}{\alpha_q}(q^{n+1}-q_H^{n+1})+(q^{n+1}-q^n)E_{\phi,r,2}^{n}
 +\frac{\tau q^{n+1}}{\Pe\We\Cn}\langle m_E^{n+1}\nabla\mu_h^{n+1},\nabla\mu_h^{n+1}\rangle_\times\notag\\
 &-\frac{\tau q^{n+1}}{\We\Cn}(\phi_h^{n+1}\nabla\mu_h^{n+1},\bu_h^{n+1})_{n+1}=0.
\end{align}
Apply the phase-field polarization \eqref{eq:phase-polarization-bdf2} from Proposition~\ref{prop:bdf-pol} and the BDF2 polarization identities for $r$ and $q$ to the first three time terms. Using
\begin{equation*}
 q^{n+1}(E^{n+1}-E^n)+(q^{n+1}-q^n)E^n=q^{n+1}E^{n+1}-q^nE^n,
\end{equation*}
equation \eqref{eq:saddle-phase-scalar-a} becomes
\begin{align}\label{eq:saddle-phase-scalar-b}
 &q^{n+1}E_{\phi,r,2}^{n+1}-q^nE_{\phi,r,2}^{n}
 +\frac1{4\alpha_q}\Bigl[(q^{n+1})^2+(2q^{n+1}-q^n)^2-(q^n)^2-(2q^n-q^{n-1})^2\Bigr]\notag\\
 &+\frac{q^{n+1}}{4\We\Cn}\norm{\phi_h^{n+1}-2\phi_h^n+\phi_h^{n-1}}_{\Cn,s}^2
 +\frac{q^{n+1}}{2\We\Cn}(r^{n+1}-2r^n+r^{n-1})^2
 +\frac1{4\alpha_q}(q^{n+1}-2q^n+q^{n-1})^2\notag\\
 &+\frac{\tau q^{n+1}}{\Pe\We\Cn}\langle m_E^{n+1}\nabla\mu_h^{n+1},\nabla\mu_h^{n+1}\rangle_\times
 -\frac{\tau q^{n+1}}{\We\Cn}(\phi_h^{n+1}\nabla\mu_h^{n+1},\bu_h^{n+1})_{n+1}=0.
\end{align}
\end{subequations}
Next, test the momentum equation. Choose $\bv_h=\bu_h^{n+1}$ in (3.9a), multiply by $\tau$, and choose $\alpha_h=p_h^{n+1}$ in (3.9b). Since
\begin{equation*}
 C_{\rho,h}^{n+1}(\bu_h^{n+1},\bu_h^{n+1})=0,
 \qquad (p_h^{n+1},\nabla\cdot\bu_h^{n+1})_{n+1}=0,
\end{equation*}
the unpolarized momentum identity, followed by the kinetic BDF2 polarization \eqref{eq:kinetic-polarization-bdf2}, gives
\begin{align}\label{eq:saddle-momentum}
 \tau(\bfv^{n+1},\bu_h^{n+1})_{n+1}
 ={}&\gamma_2(\bzeta_h^{n+1}-\bzeta_{H,h}^{n+1},\bzeta_h^{n+1})_{n+1}
 +\frac{2\tau}{\Rey}\norm{(\eta_h^{n+1})^{1/2}D(\bu_h^{n+1})}_{n+1}^2
 +\frac{\tau q^{n+1}}{\We\Cn}(\phi_h^{n+1}\nabla\mu_h^{n+1},\bu_h^{n+1})_{n+1}\notag\\
 ={}&\frac14\Bigl[\norm{\bg_{0,h}^{n+1}}_{n+1}^2+\norm{\bg_{1,h}^{n+1}}_{n+1}^2
 -\norm{\bg_{0,h}^{n}}_{n}^2-\norm{\bg_{1,h}^{n}}_{n}^2
 +\norm{\bzeta_h^{n+1}-\bg_{1,H}^{n+1}}_{n+1}^2\Bigr]\notag\\
 &+\frac{2\tau}{\Rey}\norm{(\eta_h^{n+1})^{1/2}D(\bu_h^{n+1})}_{n+1}^2
 +\frac{\tau q^{n+1}}{\We\Cn}(\phi_h^{n+1}\nabla\mu_h^{n+1},\bu_h^{n+1})_{n+1}.
\end{align}
The only remaining pair of reversible exchange terms in \eqref{eq:saddle-phase-scalar-b} and \eqref{eq:saddle-momentum} is
\begin{equation*}
 -\frac{\tau q^{n+1}}{\We\Cn}(\phi_h^{n+1}\nabla\mu_h^{n+1},\bu_h^{n+1})_{n+1}
 +\frac{\tau q^{n+1}}{\We\Cn}(\phi_h^{n+1}\nabla\mu_h^{n+1},\bu_h^{n+1})_{n+1}=0.
\end{equation*}
Therefore, adding \eqref{eq:saddle-phase-scalar-b} and \eqref{eq:saddle-momentum} cancels the capillary work exactly. Collecting the storage and numerical-dissipation terms according to \eqref{eq:modified-energy-b} and \eqref{eq:Nh2} yields
\begin{align}\label{eq:saddle-energy-bdf2}
 & E_{h,2}^{n+1}-E_{h,2}^{n}+N_{h,2}^{n+1}
 +\frac{2\tau}{\Rey}\norm{(\eta_h^{n+1})^{1/2}D(\bu_h^{n+1})}_{n+1}^2\notag\\
 &{}+\frac{\tau q^{n+1}}{\Pe\We\Cn}\langle m_E^{n+1}\nabla\mu_h^{n+1},\nabla\mu_h^{n+1}\rangle_\times
 -\tau(\bfv^{n+1},\bu_h^{n+1})_{n+1} = 0,
\end{align}
which is \eqref{eq:saddle-energy-law} for $k=2$. The case $k=1$ follows along the same chain of identities by replacing the BDF2 polarizations of $\phi_h$, $r$, $q$, and the kinetic histories with their BE counterparts.

The first time step is computed by BE and is used to initialize the BDF2 kinetic histories according to (2.13) and (2.17). The BDF2 modified-energy sequence is defined by \eqref{eq:modified-energy}--\eqref{eq:saddle-energy-law} from the first $k=2$ step onward; the connection with the BE start-up is stated after Theorem~\ref{thm:saddle-energy}. Equation~\eqref{eq:saddle-energy-law} is a discrete balance for the SAV--$q$ modified energy. If $\bfv=0$, then $q^{n+1}>0$ from Theorem~\ref{thm:solvability}, $m_E^{n+1}\ge0$, and the accepted-state bounds imply that every dissipation term in \eqref{eq:saddle-energy-law} is nonnegative. No time-step restriction enters this estimate. Hence, under the stated admissibility assumptions, the modified energy is nonincreasing for arbitrary $\tau>0$.
\end{proof}

\subsection{Modified energy law of the pressure-correction scheme}
By (3.11), the fixed-topology isometry and the unchanged physical field across a topology change both give
\begin{equation*}
 \norm{\boldsymbol\xi_{p,h}^{n+1}}^2=\norm{(\rho_h^n)^{1/2}\nabla_{\rho,h}^n\pi_h^n}_n^2.
\end{equation*}
Equation (3.12a) is then the $L^2$-orthogonal projection of this pressure-history field onto the current density-weighted gradient image space. Hence the Pythagorean decomposition gives
\begin{equation}\label{eq:pc-pythag}
 \norm{\boldsymbol\xi_{p,h}^{n+1}}^2
 =\norm{(\rho_h^{n+1})^{1/2}\nabla_{\rho,h}^{n+1}\pi_{\rm tr,h}^{n+1}}_{n+1}^2
 +\norm{\boldsymbol\xi_{p,h}^{n+1}-(\rho_h^{n+1})^{1/2}\nabla_{\rho,h}^{n+1}\pi_{\rm tr,h}^{n+1}}^2.
\end{equation}
The second term is a nonnegative projection remainder in the algorithmic pressure storage. Stepwise solvability of (3.12) and (3.14) has already been included in Theorem~\ref{thm:solvability}; here we use only the resulting orthogonality to track the pressure-gradient term. Define the pressure-correction energy and the additional remainder by
\begin{subequations}
\begin{align}
 E_{h,k}^{\ell,\rm pc}&=E_{h,k}^{\ell}+\frac{\tau^2}{2\gamma_k}\norm{(\rho_h^\ell)^{1/2}\nabla_{\rho,h}^{\ell}\pi_h^\ell}_\ell^2,\label{eq:pc-energy-a}\\
 N_{p,h,k}^{n+1}&=\frac{\tau^2}{2\gamma_k}\norm{(\rho_h^{n+1})^{1/2}\nabla_{\rho,h}^{n+1}\delta\pi_h^{n+1}}_{n+1}^2
 +\frac{\tau^2}{2\gamma_k}\norm{\boldsymbol\xi_{p,h}^{n+1}-(\rho_h^{n+1})^{1/2}\nabla_{\rho,h}^{n+1}\pi_{\rm tr,h}^{n+1}}^2.\label{eq:pc-energy-b}
\end{align}
\end{subequations}

\begin{theorem}[Modified energy balance of the pressure-correction scheme]\label{thm:pc-energy}
Assume that the current mesh satisfies the assumptions of Section 2.2, the accepted-state bounds above hold, and $q^{n+1}>0$. Then the pressure-correction scheme satisfies
\begin{align}\label{eq:pc-energy-law}
 &E_{h,k}^{n+1,\rm pc}-E_{h,k}^{n,\rm pc}+N_{h,k}^{n+1}+N_{p,h,k}^{n+1}
 +\frac{2\tau}{\Rey}\norm{(\eta_h^{n+1})^{1/2}D(\bu_{*,h}^{n+1})}_{n+1}^2\notag\\
 &\qquad+\frac{\tau q^{n+1}}{\Pe\We\Cn}\langle m_E^{n+1}\nabla\mu_h^{n+1},\nabla\mu_h^{n+1}\rangle_\times
 =\tau(\bfv^{n+1},\bu_{*,h}^{n+1})_{n+1},\qquad k=1,2.
\end{align}
\end{theorem}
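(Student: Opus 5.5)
The phase--SAV--$q$ part of the saddle-point proof carries over verbatim. Steps 1--3 and the scalar equation \eqref{eq:pc-tentative-b} have the same structure as (3.9c), with $\bu_h^{n+1}$ replaced by $\bu_{*,h}^{n+1}$. Repeating the chain \eqref{eq:saddle-phase-sav-chain}--\eqref{eq:saddle-phase-scalar-chain} with Proposition~\ref{prop:bdf-pol} therefore gives \eqref{eq:saddle-phase-scalar-b}, with exchange term $-\frac{\tau q^{n+1}}{\We\Cn}(\phi_h^{n+1}\nabla\mu_h^{n+1},\bu_{*,h}^{n+1})_{n+1}$. All the work is in the momentum/pressure part. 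Here I will show that testing \eqref{eq:pc-tentative-a} by $\bv_h=\bu_{*,h}^{n+1}$, combined with the correction step, reproduces the kinetic polarization for the terminal $\bzeta_h^{n+1}$ plus the pressure storage.

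\textbf{Kinetic and pressure identity.} Write $\bzeta_*=(\rho_h^{n+1})^{1/2}\bu_{*,h}^{n+1}$ and $\bzeta=\bzeta_h^{n+1}$. Set $\boldsymbol\psi(\alpha)=(\rho_h^{n+1})^{1/2}\nabla_{\rho,h}^{n+1}\alpha$ for $\alpha\in P_{h,0}^{n+1}$. The correction \eqref{eq:pc-corr-b} reads $\bzeta=\bzeta_*-\frac{\tau}{\gamma_k}\boldsymbol\psi(\delta\pi_h^{n+1})$. By (3.11) and \eqref{eq:pc-corr-a}, $\bzeta\perp\boldsymbol\psi(P_{h,0}^{n+1})$; indeed $(\bzeta,\boldsymbol\psi(\alpha))=-(\nabla\cdot\bu_h^{n+1},\alpha)=0$.

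Test \eqref{eq:pc-tentative-a} with $\bu_{*,h}^{n+1}$ and multiply by $\tau$. The skew term vanishes by the momentum-transport neutrality remark. By (3.11), $(\bar\pi_h^{n+1},\nabla\cdot\bu_{*,h}^{n+1})=-(\boldsymbol\psi(\bar\pi_h^{n+1}),\bzeta_*)$.

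Next, by \eqref{eq:p-hist-b} minus \eqref{eq:p-hist-a}, $\boldsymbol\psi(\bar\pi_h^{n+1}-\pi_{\rm tr,h}^{n+1})$ is the orthogonal projection of $\frac{\gamma_k}{\tau}\bzeta_{H,h}^{n+1}$ onto the gradient image. Hence $\gamma_k(\bzeta_*-\bzeta_{H,h}^{n+1},\bzeta_*)+\tau(\boldsymbol\psi(\bar\pi_h^{n+1}),\bzeta_*)$ is rewritten as follows.

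\begin{enumerate}[label=(\alph*)]
\item Split $\bzeta_*=\bzeta+\frac{\tau}{\gamma_k}\boldsymbol\psi(\delta\pi_h^{n+1})$.
\item Use the orthogonality of $\bzeta$ to drop the projected history component against $\bzeta$. Then $\gamma_k(\bzeta-\bzeta_{H,h}^{n+1},\bzeta)$ appears, and Proposition~\ref{prop:bdf-pol} polarizes it exactly as in \eqref{eq:saddle-momentum}.
\item The remaining gradient terms pair $\boldsymbol\psi(\delta\pi_h^{n+1})$ with $\boldsymbol\psi(\pi_{\rm tr,h}^{n+1})$, with itself, and with the projected history. The history pieces cancel between the two occurrences of $\bzeta_{H,h}^{n+1}$ and $\bar\pi_h^{n+1}$.
\item Using $\pi_h^{n+1}=\pi_{\rm tr,h}^{n+1}+\delta\pi_h^{n+1}$ and $2(a,b)+|b|^2=|a+b|^2-|a|^2$, these terms become $\frac{\tau^2}{2\gamma_k}\bigl[\norm{\boldsymbol\psi(\pi_h^{n+1})}^2-\norm{\boldsymbol\psi(\pi_{\rm tr,h}^{n+1})}^2+\norm{\boldsymbol\psi(\delta\pi_h^{n+1})}^2\bigr]$. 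The last piece is the first term of $N_{p,h,k}^{n+1}$.
\item Finally, replace $\norm{\boldsymbol\psi(\pi_{\rm tr,h}^{n+1})}^2$ via the Pythagorean identity \eqref{eq:pc-pythag} together with $\norm{\boldsymbol\xi_{p,h}^{n+1}}=\norm{(\rho_h^n)^{1/2}\nabla_{\rho,h}^n\pi_h^n}_n$. This yields $-\frac{\tau^2}{2\gamma_k}\norm{(\rho_h^n)^{1/2}\nabla_{\rho,h}^n\pi_h^n}_n^2$ plus the second term of $N_{p,h,k}^{n+1}$.
\end{enumerate}

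\textbf{Conclusion.} Adding this momentum identity to the phase--scalar identity cancels the capillary exchange in $\bu_{*,h}^{n+1}$ exactly. Collecting terms according to \eqref{eq:modified-energy} and \eqref{eq:pc-energy-a} gives \eqref{eq:pc-energy-law}. The viscous term and the forcing are naturally evaluated at $\bu_{*,h}^{n+1}$.

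\textbf{Main obstacle.} The delicate step is the bookkeeping in (c)--(e). The history term $\frac{\gamma_k}{\tau}\bzeta_{H,h}^{n+1}$ was partly absorbed into $\bar\pi_h^{n+1}$, and its gradient component must cancel exactly without leaving a cross term. I would first verify this at $k=1$ on a fixed mesh, where it reduces to the standard Guermond rotational/incremental identity. I would then check that the cross-mesh transfer enters only through the norm equality of $\boldsymbol\xi_{p,h}^{n+1}$ and the projection \eqref{eq:pc-pythag}.
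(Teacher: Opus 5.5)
Your proposal is correct and follows the paper's proof step for step. You reuse the phase--scalar chain with $\bu_{*,h}^{n+1}$, and you use orthogonality of the corrected $\bzeta_h^{n+1}$ to the density-weighted gradient image to remove the cross terms. The difference (3.12b)$-$(3.12a) then cancels the history pairing: you read it as the orthogonal projection of $\frac{\gamma_k}{\tau}\bzeta_{H,h}^{n+1}$, whereas the paper simply tests it with $\delta\pi_h^{n+1}$. Finally, the pressure polarization and \eqref{eq:pc-pythag} yield the storage difference and $N_{p,h,k}^{n+1}$. The only slip is a citation: the discrete-gradient definition you invoke is (3.10), not (3.11), which is the pressure-history representation.
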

\begin{proof}
Again we present the case $k=2$ and set $\gamma_2=3/2$. The phase--SAV testing is identical to that of the saddle-point branch, so we start from the common identity \eqref{eq:saddle-phase-sav-c}. The only difference is that the capillary work in the scalar equation is evaluated with the tentative velocity $\bu_{*,h}^{n+1}$ and the momentum identity also contains the pressure-correction term. Multiply \eqref{eq:saddle-phase-sav-c} by $\tau q^{n+1}/(\We\Cn)$ and write $\tau$ times (3.13b) as
\begin{equation*}
\frac{\gamma_2q^{n+1}}{\alpha_q}(q^{n+1}-q_H^{n+1})+(q^{n+1}-q^n)E_{\phi,r,2}^{n}
 +\frac{\tau q^{n+1}}{\We\Cn}\langle\phi_E^{n+1}\bu_{c,h}^{n+1},\nabla\mu_h^{n+1}\rangle_\times
 -\frac{\tau q^{n+1}}{\We\Cn}(\phi_h^{n+1}\nabla\mu_h^{n+1},\bu_{*,h}^{n+1})_{n+1}=0.
\end{equation*}
Adding this relation to the weighted form of \eqref{eq:saddle-phase-sav-c} gives the analogue of \eqref{eq:saddle-phase-scalar-a} with $\bu_h^{n+1}$ replaced by $\bu_{*,h}^{n+1}$. The same BDF2 polarizations therefore yield
\begin{align}\label{eq:pc-phase-balance}
 &q^{n+1}E_{\phi,r,2}^{n+1}-q^nE_{\phi,r,2}^{n}
 +\frac1{4\alpha_q}\Bigl[(q^{n+1})^2+(2q^{n+1}-q^n)^2-(q^n)^2-(2q^n-q^{n-1})^2\Bigr]\notag\\
 &+\frac{q^{n+1}}{4\We\Cn}\norm{\phi_h^{n+1}-2\phi_h^n+\phi_h^{n-1}}_{\Cn,s}^2
 +\frac{q^{n+1}}{2\We\Cn}(r^{n+1}-2r^n+r^{n-1})^2
 +\frac1{4\alpha_q}(q^{n+1}-2q^n+q^{n-1})^2\notag\\
 &+\frac{\tau q^{n+1}}{\Pe\We\Cn}\langle m_E^{n+1}\nabla\mu_h^{n+1},\nabla\mu_h^{n+1}\rangle_\times
 -\frac{\tau q^{n+1}}{\We\Cn}(\phi_h^{n+1}\nabla\mu_h^{n+1},\bu_{*,h}^{n+1})_{n+1}=0.
\end{align}
We next treat the tentative velocity and pressure terms specific to the pressure-correction branch. Combining the pressure-increment equation (3.14a), the velocity correction (3.14b), and the density-weighted discrete-gradient definition (3.10) gives
\begin{equation*}
 (\nabla\cdot\bu_h^{n+1},\alpha_h)_{n+1}=0\qquad\forall\alpha_h\in P_{h,0}^{n+1},
\end{equation*}
and therefore
\begin{equation*}
 (\rho_h^{n+1}\bu_h^{n+1},\nabla_{\rho,h}^{n+1}\alpha_h)_{n+1}=0
 \qquad\forall\alpha_h\in P_{h,0}^{n+1}.
\end{equation*}
The velocity correction (3.14b) then implies
\begin{equation*}
 (\rho_h^{n+1})^{1/2}\bu_{*,h}^{n+1}=(\rho_h^{n+1})^{1/2}\bu_h^{n+1}
 +\frac{\tau}{\gamma_2}(\rho_h^{n+1})^{1/2}\nabla_{\rho,h}^{n+1}\delta\pi_h^{n+1},
\end{equation*}
\begin{equation*}
 \bigl((\rho_h^{n+1})^{1/2}\bu_h^{n+1},(\rho_h^{n+1})^{1/2}\nabla_{\rho,h}^{n+1}\delta\pi_h^{n+1}\bigr)_{n+1}=0.
\end{equation*}
Subtracting (3.12a) from (3.12b) and choosing $\alpha_h=\delta\pi_h^{n+1}$ gives
\begin{equation}\label{eq:pc-pressure-cross}
 \left((\rho_h^{n+1})^{1/2}\nabla_{\rho,h}^{n+1}(\bar\pi_h^{n+1}-\pi_{\rm tr,h}^{n+1}),
 (\rho_h^{n+1})^{1/2}\nabla_{\rho,h}^{n+1}\delta\pi_h^{n+1}\right)_{n+1}
 =\frac{\gamma_2}{\tau}\left\langle\bzeta_{H,h}^{n+1},(\rho_h^{n+1})^{1/2}\nabla_{\rho,h}^{n+1}\delta\pi_h^{n+1}\right\rangle_\times.
\end{equation}
Moreover, $\pi_h^{n+1}=\pi_{\rm tr,h}^{n+1}+\delta\pi_h^{n+1}$ yields
\begin{align}\label{eq:pc-pressure-polarization}
 &2(\rho_h^{n+1}\nabla_{\rho,h}^{n+1}\pi_{\rm tr,h}^{n+1},\nabla_{\rho,h}^{n+1}\delta\pi_h^{n+1})_{n+1}
 +2\norm{(\rho_h^{n+1})^{1/2}\nabla_{\rho,h}^{n+1}\delta\pi_h^{n+1}}_{n+1}^2\notag\\
 &\qquad=\norm{(\rho_h^{n+1})^{1/2}\nabla_{\rho,h}^{n+1}\pi_h^{n+1}}_{n+1}^2
 -\norm{(\rho_h^{n+1})^{1/2}\nabla_{\rho,h}^{n+1}\pi_{\rm tr,h}^{n+1}}_{n+1}^2
 +\norm{(\rho_h^{n+1})^{1/2}\nabla_{\rho,h}^{n+1}\delta\pi_h^{n+1}}_{n+1}^2.
\end{align}
\begin{subequations}\label{eq:pc-momentum-chain}
Now choose $\bv_h=\bu_{*,h}^{n+1}$ in (3.13a) and multiply by $\tau$. The skew-symmetric transport vanishes, giving first the tentative-velocity identity before reorganizing the pressure terms,
\begin{align}\label{eq:pc-momentum-a}
 \tau(\bfv^{n+1},\bu_{*,h}^{n+1})_{n+1}
 ={}&\gamma_2(\bzeta_{*,h}^{n+1}-\bzeta_{H,h}^{n+1},\bzeta_{*,h}^{n+1})_{n+1}
 -\tau(\bar\pi_h^{n+1},\nabla\cdot\bu_{*,h}^{n+1})_{n+1}
 +\frac{2\tau}{\Rey}\norm{(\eta_h^{n+1})^{1/2}D(\bu_{*,h}^{n+1})}_{n+1}^2\notag\\
 &+\frac{\tau q^{n+1}}{\We\Cn}(\phi_h^{n+1}\nabla\mu_h^{n+1},\bu_{*,h}^{n+1})_{n+1},
\end{align}
where $\bzeta_{*,h}^{n+1}=(\rho_h^{n+1})^{1/2}\bu_{*,h}^{n+1}$. By the velocity correction and discrete gradient orthogonality, the time term expands as
\begin{align*}
 \gamma_2(\bzeta_{*,h}^{n+1}-\bzeta_{H,h}^{n+1},\bzeta_{*,h}^{n+1})_{n+1}
 ={}&\gamma_2(\bzeta_h^{n+1}-\bzeta_{H,h}^{n+1},\bzeta_h^{n+1})_{n+1}
 -\tau\left\langle\bzeta_{H,h}^{n+1},(\rho_h^{n+1})^{1/2}\nabla_{\rho,h}^{n+1}\delta\pi_h^{n+1}\right\rangle_\times\\
 &+\frac{\tau^2}{\gamma_2}\norm{(\rho_h^{n+1})^{1/2}\nabla_{\rho,h}^{n+1}\delta\pi_h^{n+1}}_{n+1}^2.
\end{align*}
On the other hand, since the corrected velocity $\bu_h^{n+1}$ is orthogonal to the current discrete gradient space, the density-weighted discrete-gradient definition (3.10) and the velocity correction imply
\begin{equation*}
 -\tau(\bar\pi_h^{n+1},\nabla\cdot\bu_{*,h}^{n+1})_{n+1}
 =\frac{\tau^2}{\gamma_2}\left((\rho_h^{n+1})^{1/2}\nabla_{\rho,h}^{n+1}\bar\pi_h^{n+1},
 (\rho_h^{n+1})^{1/2}\nabla_{\rho,h}^{n+1}\delta\pi_h^{n+1}\right)_{n+1}.
\end{equation*}
Adding these two expansions and using \eqref{eq:pc-pressure-cross} to eliminate the cross pairing between $\bzeta_{H,h}^{n+1}$ and the pressure-increment gradient gives
\begin{align*}
 &\gamma_2(\bzeta_{*,h}^{n+1}-\bzeta_{H,h}^{n+1},\bzeta_{*,h}^{n+1})_{n+1}
 -\tau(\bar\pi_h^{n+1},\nabla\cdot\bu_{*,h}^{n+1})_{n+1}\\
 &\quad=\gamma_2(\bzeta_h^{n+1}-\bzeta_{H,h}^{n+1},\bzeta_h^{n+1})_{n+1}
 +\frac{\tau^2}{\gamma_2}(\rho_h^{n+1}\nabla_{\rho,h}^{n+1}\pi_{\rm tr,h}^{n+1},\nabla_{\rho,h}^{n+1}\delta\pi_h^{n+1})_{n+1}
 +\frac{\tau^2}{\gamma_2}\norm{(\rho_h^{n+1})^{1/2}\nabla_{\rho,h}^{n+1}\delta\pi_h^{n+1}}_{n+1}^2.
\end{align*}
Polarizing the last two pressure terms by \eqref{eq:pc-pressure-polarization}, then applying the kinetic BDF2 polarization \eqref{eq:kinetic-polarization-bdf2}, the Pythagorean decomposition \eqref{eq:pc-pythag}, and the pressure-history norm preservation gives the chained identity
\begin{equation}\label{eq:pc-momentum-b}
\begin{aligned}
 \tau(\bfv^{n+1},\bu_{*,h}^{n+1})_{n+1}
 ={}&\gamma_2(\bzeta_h^{n+1}-\bzeta_{H,h}^{n+1},\bzeta_h^{n+1})_{n+1}
 +\frac{2\tau}{\Rey}\norm{(\eta_h^{n+1})^{1/2}D(\bu_{*,h}^{n+1})}_{n+1}^2\\
 &+\frac{\tau q^{n+1}}{\We\Cn}(\phi_h^{n+1}\nabla\mu_h^{n+1},\bu_{*,h}^{n+1})_{n+1}
 +\frac{\tau^2}{2\gamma_2}\Bigl[
 \norm{(\rho_h^{n+1})^{1/2}\nabla_{\rho,h}^{n+1}\pi_h^{n+1}}_{n+1}^2\\
 &-\norm{(\rho_h^{n+1})^{1/2}\nabla_{\rho,h}^{n+1}\pi_{\rm tr,h}^{n+1}}_{n+1}^2
 +\norm{(\rho_h^{n+1})^{1/2}\nabla_{\rho,h}^{n+1}\delta\pi_h^{n+1}}_{n+1}^2\Bigr]\\
 ={}&\frac14\Bigl[\norm{\bg_{0,h}^{n+1}}_{n+1}^2+\norm{\bg_{1,h}^{n+1}}_{n+1}^2
 -\norm{\bg_{0,h}^{n}}_{n}^2-\norm{\bg_{1,h}^{n}}_{n}^2
 +\norm{\bzeta_h^{n+1}-\bg_{1,H}^{n+1}}_{n+1}^2\Bigr]\\
 &+\frac{\tau^2}{2\gamma_2}\Bigl[\norm{(\rho_h^{n+1})^{1/2}\nabla_{\rho,h}^{n+1}\pi_h^{n+1}}_{n+1}^2
 -\norm{(\rho_h^{n})^{1/2}\nabla_{\rho,h}^{n}\pi_h^{n}}_{n}^2\Bigr]
 +N_{p,h,2}^{n+1}\\
 &+\frac{2\tau}{\Rey}\norm{(\eta_h^{n+1})^{1/2}D(\bu_{*,h}^{n+1})}_{n+1}^2
 +\frac{\tau q^{n+1}}{\We\Cn}(\phi_h^{n+1}\nabla\mu_h^{n+1},\bu_{*,h}^{n+1})_{n+1}.
\end{aligned}
\end{equation}
\end{subequations}
The tentative-velocity capillary terms in \eqref{eq:pc-phase-balance} and \eqref{eq:pc-momentum-b} have equal magnitude and opposite signs, and therefore cancel exactly. Adding the two identities gives
\begin{align*}
 &E_{h,2}^{n+1,\rm pc}-E_{h,2}^{n,\rm pc}+N_{h,2}^{n+1}+N_{p,h,2}^{n+1}
 +\frac{2\tau}{\Rey}\norm{(\eta_h^{n+1})^{1/2}D(\bu_{*,h}^{n+1})}_{n+1}^2\\
 &\qquad+\frac{\tau q^{n+1}}{\Pe\We\Cn}\langle m_E^{n+1}\nabla\mu_h^{n+1},\nabla\mu_h^{n+1}\rangle_\times
 -\tau(\bfv^{n+1},\bu_{*,h}^{n+1})_{n+1}=0,
\end{align*}
which is \eqref{eq:pc-energy-law} for $k=2$. The case $k=1$ follows by the same argument, with every BDF2 polarization replaced by its BE counterpart.

The first step is computed by BE, using the prescribed $\pi_h^0$ to form the first algorithmic pressure-gradient history. For $\bfv=0$, Theorem~\ref{thm:solvability}, $m_E^{n+1}\ge0$, and the accepted-state bounds make every remainder and dissipation term in \eqref{eq:pc-energy-law} nonnegative, so the pressure-correction modified energy is nonincreasing at each BDF2 step. The BDF2 pressure-correction energy law applies from the first $k=2$ step onward. BE and BDF2 use their respective coefficient $1/(2\gamma_k)$ in the algorithmic pressure storage. The pressure-gradient term is an algorithmic component of the modified energy, and the orthogonal defect of a topology-changing pressure projection appears as the nonnegative remainder in \eqref{eq:pc-energy-b}.
\end{proof}

\noindent\textbf{BE--BDF2 start-up connection.} The first-order start-up storage controls the initial BDF2 storage. Indeed, the Hilbert-space inequality
\begin{equation*}
 \frac14\left(\norm{x^1}^2+\norm{2x^1-x^0}^2\right)
 \le\frac34\norm{x^1}^2+\frac12\norm{x^1-x^0}^2
\end{equation*}
applied componentwise to the kinetic, phase, SAV, and scalar storages, together with $q^1>0$, gives
\begin{equation*}
 E_{h,2}^1\le\frac32E_{h,1}^1+N_{h,1}^1,
 \qquad E_{h,2}^{1,\rm pc}\le\frac32E_{h,1}^{1,\rm pc}+N_{h,1}^1.
\end{equation*}
Consequently, when $\bfv=0$, the BE energy balances imply
\begin{equation*}
 E_{h,2}^1\le\frac32E_{h,1}^0,
 \qquad E_{h,2}^{1,\rm pc}\le\frac32E_{h,1}^{0,\rm pc}.
\end{equation*}
Combining these bounds with \eqref{eq:saddle-energy-law} and \eqref{eq:pc-energy-law} yields, for all subsequent BDF2 levels,
\begin{equation*}
 E_{h,2}^n\le\frac32E_{h,1}^0,
 \qquad E_{h,2}^{n,\rm pc}\le\frac32E_{h,1}^{0,\rm pc}.
\end{equation*}
Thus the BE start-up and the BDF2 energy sequence are controlled by the initial data without any additional time-step restriction beyond the individual BE and BDF2 balances.

\section{Computational implementation}
This section describes the mesh, cross-space integration, and linear algebra used to implement the discretizations of Sections 2--4. The analysis itself only requires the accepted-mesh and finite element admissibility conditions of Section 2.2 and is independent of the particular metric generator. For the physical-interface tests of Section 6, both mesh branches use anisotropic metrics constructed from the accepted phase field $\phi_h^n$: the fixed-topology branch employs a trace--logarithmic variational moving mesh, and the topology-changing branch uses MMG-type metric-based $h$-adaptation. In the manufactured temporal-convergence test, an exact-Hessian metric is used for the topology-changing branch to isolate temporal error. On nonmatching meshes, $\langle\cdot,\cdot\rangle_\times$ is evaluated by overlap/supermesh integration or an equivalent piecewise integration on the active history meshes. In Sections 2--4 these symbols denote the corresponding physical cross-mesh integrals; the implementation evaluates them with the finite element quadrature accuracy required by the chosen polynomial and coefficient representations. For BE the active meshes are $\Th^{n+1}$ and $\Th^n$; for BDF2 they are $\Th^{n+1}$, $\Th^n$, and $\Th^{n-1}$.

Linear cross-mesh pairings are assembled on the corresponding source--target intersections. For coefficients formed from the BDF2 extrapolation, all three active meshes are represented on a common local partition. For example,
\begin{equation*}
\int_\Omega F(2\phi_h^n-\phi_h^{n-1})z_h^{n+1}\,dx
=\sum_{K^{n+1}\in\Th^{n+1}}\sum_{K^n\in\Th^n}\sum_{\substack{K^{n-1}\in\Th^{n-1}\\ |K^{n+1}\cap K^n\cap K^{n-1}|>0}}
\int_{K^{n+1}\cap K^n\cap K^{n-1}}F(2\phi_h^n-\phi_h^{n-1})z_h^{n+1}\,dx.
\end{equation*}
This common-intersection assembly supplies the BDF2 extrapolated mobility, nonlinear free-energy coefficient, and all other nonlinear history-dependent coefficients used in the current equations.

\subsection{Phase-driven metric and the two mesh updates}
The analysis in Sections 2--4 applies to accepted meshes satisfying the shape and finite element stability assumptions of Section 2.2. A representative phase-driven choice used in the physical-interface tests is a gradient-based metric constructed from Huang's anisotropic metric-tensor framework \cite{Huang2005}. Given the accepted state $\phi_h^n$ on $\Th^n$, choose $\nu_M\in[0,1]$ and $r_M\ge1$, and set
\begin{subequations}\label{eq:metric-defs}
\begin{equation}\label{eq:metric-alpha}
 \alpha_M^n=\max\left\{\alpha_{\min},\left[\frac1{|\Omega|}\int_\Omega
 \norm{\nabla\phi_h^n}^{1/(d/r_M+1-\nu_M)}\,dx\right]^{2(d/r_M+1-\nu_M)}\right\}.
\end{equation}
If the number of elements is fixed as $N_h=\#\Th^n$, take
\begin{align}\label{eq:metric-field}
 \mathbf M_{f,h}^n(x)={}&\left[\frac{N_h}{\displaystyle\int_\Omega
 \left(1+\left(\norm{\nabla\phi_h^n}^2\right)/\left(\alpha_M^n\right)\right)^{1/[2(d/r_M+1-\nu_M)]}\,dx}\right]^{2/d}\notag\\
 &\times\left(1+\left(\norm{\nabla\phi_h^n}^2\right)/\left(\alpha_M^n\right)\right)^{\frac1d\left(\frac1{d/r_M+1-\nu_M}-1\right)}
 \left[\mathbf I+\left(\nabla\phi_h^n(\nabla\phi_h^n)^T\right)/\left(\alpha_M^n\right)\right].
\end{align}
For each $K\in\Th^n$, take the elementwise average and apply the prescribed smoothing and spectral truncation,
\begin{equation}\label{eq:metric-cell}
 \mathbf M_K^n=\frac1{|K|}\int_K \mathbf M_{f,h}^n(x)\,dx,
 \qquad 0<\lambda_M^-\mathbf I\preceq \mathbf M_K^n\preceq\lambda_M^+\mathbf I.
\end{equation}
\end{subequations}
The $N_h$ normalization in \eqref{eq:metric-field} gives the metric scale for $r$-movement with a fixed number of degrees of freedom. In $h$-adaptation, $\mathbf M_{f,h}^n$ supplies the anisotropic directions and relative scales, and its global scaling is normalized to the prescribed mesh complexity.

\noindent\textbf{Fixed-topology trace--logarithmic variational mesh update.} With the subscript ${\rm TL}$ denoting the trace--logarithmic functional, fix one metric epoch and define
\begin{equation*}
 p_{\rm TL}=\frac{d\gamma_{\rm TL}}2,\qquad \gamma_{\rm TL}>1,
 \qquad
 \theta_h^n=\left(\left(\displaystyle\sum_{K\in\Th^n}|K|\sqrt{\det \mathbf M_K^n}\right)/\left(|\widehat\Omega|\right)\right)^{-2/d}.
\end{equation*}
For a physical element $K$ and its computational counterpart, let
\begin{align}
 \mathbf E_K^n&=[\boldsymbol x_1^K-\boldsymbol x_0^K,\ldots,\boldsymbol x_d^K-\boldsymbol x_0^K],\qquad
 \mathbf E_{c,K}=[\boldsymbol\xi_1^K-\boldsymbol\xi_0^K,\ldots,\boldsymbol\xi_d^K-\boldsymbol\xi_0^K],\notag\\
 \mathbf P_K^n&=\mathbf E_{c,K}(\mathbf E_K^n)^{-1},\qquad
 \mathbf A_K^n=\mathbf P_K^n(\mathbf M_K^n)^{-1}(\mathbf P_K^n)^T.
\end{align}
Within this epoch, $\mathbf M_K^n$ and the physical element $K$ are fixed while the computational coordinates $\boldsymbol\xi$ evolve in artificial time. The discrete trace--logarithmic functional is
\begin{equation}
 I_h^n(\xi_h)=\sum_{K\in\Th^n}|K|\sqrt{\det \mathbf M_K^n}\left\{[\operatorname{tr}(\mathbf A_K^n)]^{p_{\rm TL}}
 -p_{\rm TL}d^{p_{\rm TL}-1}(\theta_h^n)^{p_{\rm TL}}\log\det \mathbf A_K^n\right\}.
\end{equation}
Its computational-coordinate residual uses
\begin{subequations}\label{eq:tl-residual}
\begin{equation}\label{eq:tl-Q}
 \mathbf Q_K=p_{\rm TL}[\operatorname{tr}(\mathbf A_K^n)]^{p_{\rm TL}-1}\mathbf A_K^n
 -p_{\rm TL}d^{p_{\rm TL}-1}(\theta_h^n)^{p_{\rm TL}}\mathbf I.
\end{equation}
Let $\mathbf R=[-1\ \mathbf I_d]^T\in\R^{(d+1)\times d}$. Applying the geometric discretization of \cite{WangHuangWei2026} in computational coordinates gives
\begin{equation}\label{eq:tl-residual-gradient}
 \frac{\partial I_h^n}{\partial\boldsymbol\xi_i}
 =\sum_{K\in S_i}2|K|\sqrt{\det \mathbf M_K^n}
 \left[\mathbf R(\mathbf E_{c,K})^{-1}\mathbf Q_K\right]_{\ell(i,K),:}.
\end{equation}
\end{subequations}
During a metric epoch, $\Th^n$, $\mathbf M_K^n$, and the global metric volume remain fixed; each residual evaluation updates $\mathbf A_K^n$ and $\mathbf Q_K$ through the current $\mathbf E_{c,K}$. Let $\mathbf M_i^n$ denote the nodal metric obtained from surrounding element metrics. With the dimensionless nodal scaling used here, the free computational nodes satisfy
\begin{equation}
 \frac{d\boldsymbol\xi_i}{d\varsigma}
 =-\frac1{\tau_{\rm mesh}}
 \left[(\theta_h^n)^{-1/2}(\det \mathbf M_i^n)^{1/d}\right]^{-d/4}
 \frac{\partial I_h^n}{\partial\boldsymbol\xi_i},\qquad i\in\mathcal N_{\rm free}.
\end{equation}
Artificial time is integrated with the backward differentiation formula strategy used in \cite{WangHuangWei2026}. Once the computational mesh has been obtained, the nodal correspondence with the current physical mesh defines a piecewise-affine inverse map $\boldsymbol\Psi_h^n$. The next physical mesh is recovered at fixed reference nodes by
\begin{equation}
 \boldsymbol\Psi_h^n(\boldsymbol\xi_i^*)=\boldsymbol x_i^n,\qquad \boldsymbol x_i^{n+1}=\boldsymbol\Psi_h^n(\widehat{\boldsymbol x}_i),\qquad
 \Th^{n+1}=\bX_h^{n+1}(\widehat{\mathcal T}_h).
\end{equation}
The candidate mesh enters the CHNS update after satisfying the boundary correspondence, positive orientation, and fixed-topology mesh assumptions of Sections 2.2 and 2.4.

\noindent\textbf{MMG topology-changing update.} The topology-changing branch constructs a new conforming simplicial mesh in the same symmetric positive-definite metric field. First rescale the metric uniformly to match the target mesh complexity,
\begin{equation*}
 \int_\Omega\sqrt{\det \mathbf M^n(x)}\,dx\simeq N_{\rm tar},
\end{equation*}
where $N_{\rm tar}$ denotes the target element complexity. For a new element $K$, let $\mathbf M_K^n$ denote the local metric average. The output is accepted when the quasi-$M$-uniform conditions
\begin{subequations}\label{eq:mmg-conditions}
\begin{equation}\label{eq:mmg-equidistribution}
 0<c_{\rm eq}\le
 \frac{N_{\rm tar}|K|\sqrt{\det \mathbf M_K^n}}{\displaystyle\int_\Omega\sqrt{\det \mathbf M^n(x)}\,dx}
 \le C_{\rm eq}<\infty,\qquad K\in\Th^{n+1},
\end{equation}
and
\begin{equation}\label{eq:mmg-alignment}
 1\le\frac{\operatorname{tr}((\mathbf E_K)^T\mathbf M_K^n\mathbf E_K)}{d\,\det((\mathbf E_K)^T\mathbf M_K^n\mathbf E_K)^{1/d}}
 \le C_{\rm ali},\qquad \det \mathbf E_K>0,
\end{equation}
\end{subequations}
hold with mesh-independent positive constants. Condition (5.7a) controls quasi-equidistribution of metric volume, while (5.7b) controls the deviation of each element from isotropy in the local metric. Boundary labels and prescribed internal geometric constraints are preserved. The MMG operations of point insertion/removal, connectivity changes, and vertex relocation are driven by the prescribed metric \cite{ArpaiaEtAl2022,DapognyDobrzynskiFrey2014,DobrzynskiFrey2008}. We accept the remeshed grid only when $\Th^{n+1}$ satisfies \eqref{eq:mmg-conditions} and the finite element admissibility conditions of Section 2.2; the overlap partition is then constructed as described at the beginning of this section and reused by all cross-mesh histories.

The phase BE/BDF2 history is represented by the current phase-energy Ritz map (2.11), the carrier history by (3.2), the kinetic BE/BDF2 histories by the norm-constrained nearest projection \eqref{eq:kinetic-transfer-constrained}--\eqref{eq:kinetic-transfer-normalized}, and the pressure-gradient history by (3.11)--(3.12). On refinement steps for which the broken target space contains the restrictions of the old piecewise polynomials, the local projection acts as the identity; coarsening and general nonnested remeshing use the same overlap-defined projection and history construction.

\subsection{Linear algebra and operator reuse}
All field subproblems in Section 3 are linear. Once the mesh at $t^{n+1}$ has been accepted, the implementation uses cross-mesh preparation, two affine-response condensations for $r^{n+1}$ and $q^{n+1}$, and, in the pressure-correction branch, repeated application of one density-weighted pressure operator. The following algebra shows which matrices can be reused.

\subsubsection{Cross-mesh preparation}
Let $\{\varphi_i\}$ be a basis of $S_h^{n+1}$ and set
\begin{equation}
 M_{ij}=(\varphi_j,\varphi_i)_{n+1},\qquad
 K_{ij}=(\nabla\varphi_j,\nabla\varphi_i)_{n+1},\qquad
 \mathbf A_\phi=\Cn^2\mathbf K+s\mathbf M.
\end{equation}
The same matrix $\mathbf A_\phi$ is the current phase-energy Ritz operator and the phase-energy block in the CH--SAV solve. The phase history and the topology-changing carrier correction are prepared from
\begin{equation}
 \mathbf A_\phi\boldsymbol\Phi_H=\boldsymbol b_H,\qquad \mathbf K\boldsymbol\Lambda=\boldsymbol b_\lambda,
\end{equation}
where
\begin{equation*}
 (\boldsymbol b_H)_i=\Cn^2\langle\nabla\phi_H^{n+1},\nabla\varphi_i\rangle_\times
 +s\langle\phi_H^{n+1},\varphi_i\rangle_\times,
 \qquad
 (\boldsymbol b_\lambda)_i=\langle\bu_{E,h}^{n+1},\nabla\varphi_i\rangle_\times,
\end{equation*}
and the second system is understood on the zero-mean subspace. On a fixed-topology ALE interval, the reference correction uses the fixed stiffness matrix $\widehat{\mathbf K}$ and is pushed forward by (3.15), so the reference factorization is reusable over the interval. At a topology-changing step, the overlap partition is assembled once and supplies the right-hand sides above, the kinetic-history projection data, and the pressure-history data. If the kinetic-history target in \eqref{eq:kinetic-transfer-constrained} is the broken polynomial space on the current mesh, its $L^2$ projection is elementwise and \eqref{eq:kinetic-transfer-normalized} completes the history by a global norm rescaling.

\subsubsection{Phase responses and SAV condensation}
Define
\begin{equation*}
 (K_m)_{ij}=\langle m_E^{n+1}\nabla\varphi_j,\nabla\varphi_i\rangle_\times,
\end{equation*}
\begin{equation*}
 (\boldsymbol f_\phi)_i=\frac{\gamma_k}{\tau}(\phi_{H,h}^{n+1},\varphi_i)_{n+1}
 +\langle\phi_E^{n+1}\bu_{c,h}^{n+1},\nabla\varphi_i\rangle_\times,
 \qquad
 (\boldsymbol b_G)_i=\frac1{U_E^{n+1}}\langle G'(\phi_E^{n+1}),\varphi_i\rangle_\times.
\end{equation*}
The two affine field responses are obtained simultaneously from the same block operator,
\begin{equation}
 \begin{bmatrix}
 \dfrac{\gamma_k}{\tau}\mathbf M&\dfrac1\Pe \mathbf K_m\\[2pt]
 -\mathbf A_\phi&\mathbf M
 \end{bmatrix}
 \begin{bmatrix}\boldsymbol\Phi_0&\boldsymbol\Phi_1\\ \boldsymbol\Psi_0&\boldsymbol\Psi_1\end{bmatrix}
 =\begin{bmatrix}\boldsymbol f_\phi&0\\0&\boldsymbol b_G\end{bmatrix},
 \qquad
 \boldsymbol\Phi=\boldsymbol\Phi_0+r^{n+1}\boldsymbol\Phi_1,\quad \boldsymbol\Psi=\boldsymbol\Psi_0+r^{n+1}\boldsymbol\Psi_1.
\end{equation}
Substitution into (3.4c) gives
\begin{equation}
 \left[1-\frac1{2U_E^{n+1}}\langle G'(\phi_E^{n+1}),\phi_{1,h}^{n+1}\rangle_\times\right]r^{n+1}
 =r_H^{n+1}+\frac1{2U_E^{n+1}}\langle G'(\phi_E^{n+1}),\phi_{0,h}^{n+1}-\phi_{H,h}^{n+1}\rangle_\times.
\end{equation}
The unit response satisfies
\begin{equation*}
 \frac{\gamma_k}{\tau}(\phi_{1,h}^{n+1},\mu_{1,h}^{n+1})_{n+1}
 +\frac1\Pe\langle m_E^{n+1}\nabla\mu_{1,h}^{n+1},\nabla\mu_{1,h}^{n+1}\rangle_\times=0,
\end{equation*}
\begin{equation*}
 (\mu_{1,h}^{n+1},\phi_{1,h}^{n+1})_{n+1}
 =\norm{\phi_{1,h}^{n+1}}_{\Cn,s}^2
 +\frac1{U_E^{n+1}}\langle G'(\phi_E^{n+1}),\phi_{1,h}^{n+1}\rangle_\times,
\end{equation*}
and therefore
\begin{equation*}
 \frac1{U_E^{n+1}}\langle G'(\phi_E^{n+1}),\phi_{1,h}^{n+1}\rangle_\times
 =-\norm{\phi_{1,h}^{n+1}}_{\Cn,s}^2
 -\frac{\tau}{\gamma_k\Pe}\langle m_E^{n+1}\nabla\mu_{1,h}^{n+1},\nabla\mu_{1,h}^{n+1}\rangle_\times\le0.
\end{equation*}
Thus the coefficient in brackets in (5.11) is at least one, and the scalar reconstruction is uniquely determined.

For an iterative solver, eliminating the phase variable through the existing $\mathbf A_\phi$ solve yields
\begin{equation}
 \left(\frac{\gamma_k}{\tau}\mathbf M\mathbf A_\phi^{-1}\mathbf M+\frac1\Pe \mathbf K_m\right)\boldsymbol\Psi
 =\boldsymbol f_\phi+\frac{\gamma_k}{\tau}r^{n+1}\mathbf M\mathbf A_\phi^{-1}\boldsymbol b_G.
\end{equation}
This operator is symmetric positive definite for $m_E^{n+1}\ge0$, and its action reuses the $\mathbf A_\phi$ solver from the phase-energy Ritz history.

\subsubsection{Momentum responses and determination of \texorpdfstring{$q$}{q}}
Let $\{\bv_i\}$ and $\{\psi_a\}$ be bases of $V_h^{n+1}$ and $P_{h,0}^{n+1}$, and define
\begin{align}
 (M_\rho)_{ij}&=(\rho_h^{n+1}\bv_j,\bv_i)_{n+1},\qquad
 (C_\rho)_{ij}=C_{\rho,h}^{n+1}(\bv_j,\bv_i),\notag\\
 (K_\eta)_{ij}&=(\eta_h^{n+1}D\bv_j,D\bv_i)_{n+1},\qquad
 B_{ai}=(\nabla\cdot\bv_i,\psi_a)_{n+1},\notag\\
 \mathbf H&=\frac{\gamma_k}{\tau}\mathbf M_\rho+\mathbf C_\rho+\frac2\Rey\mathbf K_\eta.
\end{align}
The history--forcing and unit-capillary loads are
\begin{equation*}
 (\boldsymbol f_u)_i=\frac{\gamma_k}{\tau}\left\langle\bzeta_{H,h}^{n+1},(\rho_h^{n+1})^{1/2}\bv_i\right\rangle_\times
 +(\bfv^{n+1},\bv_i)_{n+1},
 \qquad
 (\boldsymbol f_c)_i=\frac1{\We\Cn}(\phi_h^{n+1}\nabla\mu_h^{n+1},\bv_i)_{n+1}.
\end{equation*}
The saddle-point responses are obtained from the two-right-hand-side system
\begin{equation}
 \begin{bmatrix}\mathbf H&-\mathbf B^T\\ \mathbf B&0\end{bmatrix}
 \begin{bmatrix}\boldsymbol U_0&\boldsymbol U_1\\ \boldsymbol P_0&\boldsymbol P_1\end{bmatrix}
 =\begin{bmatrix}\boldsymbol f_u&-\boldsymbol f_c\\0&0\end{bmatrix}.
\end{equation}
For the pressure-correction branch, after $\bar P$ has been obtained, the tentative responses satisfy
\begin{equation}
 \mathbf H[\boldsymbol U_{*,0}\ \boldsymbol U_{*,1}]=[\boldsymbol f_u+\mathbf B^T\bar{\boldsymbol P}\quad -\boldsymbol f_c].
\end{equation}
The two responses determine $A_q$, $B_q$, $C_q$ through \eqref{eq:q-coefficients}, and the scalar formula reconstructs $q^{n+1}$. A numerically stable evaluation of the positive root is
\begin{equation}
 q^{n+1}=\begin{cases}
 \dfrac{2C_q}{B_q+\sqrt{B_q^2+4A_qC_q}},& B_q\ge0,\\[8pt]
 \dfrac{-B_q+\sqrt{B_q^2+4A_qC_q}}{2A_q},& B_q<0.
 \end{cases}
\end{equation}
Since $\mathbf C_\rho^T=-\mathbf C_\rho$, the quadratic form of $\mathbf H$ is carried by $\frac{\gamma_k}{\tau}\mathbf M_\rho+\frac2\Rey \mathbf K_\eta$; this symmetric coercive part supplies a natural core for velocity preconditioning.

\subsubsection{Pressure-correction operator}
Using the discrete-gradient relation (3.10), coefficient elimination gives the positive pressure operator
\begin{equation}
 \mathbf S_p=\mathbf B\mathbf M_\rho^{-1}\mathbf B^T.
\end{equation}
With $\{\psi_a\}$ the pressure basis used in (5.13), define
\begin{equation*}
 (\boldsymbol g_{\rm tr})_a=\left\langle\bxi_{p,h}^{n+1},(\rho_h^{n+1})^{1/2}\nabla_{\rho,h}^{n+1}\psi_a\right\rangle_\times,
\end{equation*}
\begin{equation*}
 (\boldsymbol g_{\rm bar})_a=\left\langle\bxi_{p,h}^{n+1}+\frac{\gamma_k}{\tau}\bzeta_{H,h}^{n+1},(\rho_h^{n+1})^{1/2}\nabla_{\rho,h}^{n+1}\psi_a\right\rangle_\times,
 \qquad
 (\boldsymbol d_*)_a=(\nabla\cdot\bu_{*,h}^{n+1},\psi_a)_{n+1}.
\end{equation*}
The two pressure-history representations and the pressure increment use the same operator,
\begin{equation}
 \mathbf S_p[\boldsymbol P_{\rm tr}\ \bar{\boldsymbol P}]=[\boldsymbol g_{\rm tr}\ \boldsymbol g_{\rm bar}],\qquad
 \mathbf S_p\,\delta\boldsymbol P=-\frac{\gamma_k}{\tau}\boldsymbol d_*.
\end{equation}
The first two right-hand sides are available before the tentative-velocity solve, and $\boldsymbol d_*$ is assembled from the resulting tentative velocity. The action $\boldsymbol x\mapsto \mathbf B\mathbf M_\rho^{-1}\mathbf B^T\boldsymbol x$ shares the weighted mass solver and the pressure preconditioner across all three applications.

\noindent\textbf{Operator reuse.} Within one time step, the CH--SAV block is used for the two $r$ responses, the saddle or tentative-velocity matrix for the two $q$ responses, and the pressure matrix for two history solves and one correction. A direct solver can reuse one factorization for these right-hand sides; an iterative solver can reuse one preconditioner. During fixed-topology ALE motion the sparsity pattern does not change, so the ordering and symbolic factorization can be kept while the coefficients are updated. After remeshing, the overlap data and algebraic graph are rebuilt.

\section{Numerical experiments}
\label{sec:numerical}

We first verify the temporal accuracy of the BDF2 discretizations and then examine the discrete structural
properties and benchmark behavior under the two mesh updates considered above. We denote the fixed-topology
$r$-adaptive moving-mesh method by RM and the topology-changing $h$-adaptive method by HM; SP and PC denote
the saddle-point and pressure-correction fluid solvers. The three tested combinations are
RM-SP, RM-PC, and HM-SP\@. The first physical time step is computed by backward Euler. Unless otherwise stated,
$p$ denotes the effective pressure defined in Section~2.1.

The computations are implemented with the Finite Element Analysis Library in Python (FEALPy) \cite{ZhengEtAl2026}. Its mesh data structures and finite element
function spaces are used throughout, and the fixed-topology moving meshes are generated with its
moving-mesh module.

RM-SP and RM-PC use the same fixed-connectivity, phase-driven trace--logarithmic variational moving-mesh
update and the same reference-corrected Piola carrier; only the velocity--pressure treatment differs. Their meshes are
pre-adapted from the initial phase field and then moved and checked at every physical time step. HM-SP is
likewise initialized from a pre-adapted mesh, with subsequent topology-changing adaptation controlled as
specified in each test. In the gravity-driven tests we take
\[
\bfv=\Fr^{-1}\rho(\phi)\bg,\qquad \bg=(0,-1).
\]

All conservation and compatibility diagnostics reported below are absolute, without normalization.
The global phase-mass drift is
\[
\delta_M^n=\bigl|(\phi_h^n,1)_n-(\phi_h^0,1)_0\bigr|.
\]
At a topology-changing step, the accuracy of the physical cross-mesh integration is monitored by
\[
\delta_{M,\times}^{n\to n+1}
=
\left|\left\langle\phi_h^n,1\right\rangle_\times-(\phi_h^n,1)_n\right|.
\]
This diagnostic measures the accuracy of the cross-mesh physical pairing; it does not introduce a separate
remapped phase field.

The accepted velocity and the compatible carrier satisfy different weak conditions in the theory and are
therefore monitored in their corresponding test spaces. Let $\boldsymbol d_{P,h}(\boldsymbol v_h)$ denote the
assembled divergence vector against the pressure basis and let $\boldsymbol d_{S,h}(\boldsymbol v)$ denote the
assembled carrier-compatibility vector against gradients of the phase basis. We report
\[
r_{\rm div}(\boldsymbol v_h)=\|\boldsymbol d_{P,h}(\boldsymbol v_h)\|_{w,P},
\qquad
r_{\rm car}(\boldsymbol v)=\|\boldsymbol d_{S,h}(\boldsymbol v)\|_{w,S}.
\]
Thus $r_{\rm div}$ measures the discrete incompressibility of the accepted velocity, whereas $r_{\rm car}$
measures the compatibility required by Step~2 of Section~3.1. For RM the latter is supplied by the
reference-space correction followed by the Piola pushforward; for HM it is supplied by the current-space correction.

For the energy tests we distinguish the physical AGG energy $E_{\rm AGG}$ of Theorem~2.1 from the discrete
modified storage. In the BDF2 regime the latter is $E_{h,2}^n$ for SP and $E_{h,2}^{n,\mathrm{pc}}$ for PC;
the BE first step uses the corresponding $k=1$ storage. Additive constants are omitted in plotted energies.

\subsection{Temporal convergence}
\label{sec:temporal-convergence}

The manufactured solution is prescribed on $\Omega=(0,1)^2$ for $0<t\le T=0.2$ by
\begin{align}
\phi_e(x,y,t)
&=\tanh\!\left(3\big[\cos(\pi x)\cos(\pi y)-0.35\sin(\pi t)\big]\right), \label{eq:ms-phi}\\
\psi_e(x,y,t)
&=0.1\sin^2(\pi x)\sin^2(\pi y)\sin(\pi t), \label{eq:ms-psi}\\
\bu_e&=(\partial_y\psi_e,-\partial_x\psi_e)^T, \label{eq:ms-u}\\
p_e &=0.1\cos(\pi x)\cos(\pi y)\cos(\pi t), \label{eq:ms-p}\\
\mu_e&=-\Cn^2\Delta\phi_e+\phi_e^3-\phi_e. \label{eq:ms-mu}
\end{align}
Thus $\nabla\!\cdot\bu_e=0$ and $\int_\Omega p_e\,dx=0$. We take
\(
\Cn=\Pe=0.14, \Rey=\We=1, m(\phi)=1,
\)
with $\rho_1=1.2$, $\rho_2=0.8$, $\eta_1=0.15$, and $\eta_2=0.05$, so that
\[
\rho(\phi)=1+0.2\phi,
\qquad
\eta(\phi)=0.1+0.05\phi.
\]
The phase source and momentum source are obtained by substituting $(\phi_e,\mu_e,\bu_e,p_e)$ into the corresponding equations of the extended AGG system on the consistent branch $q=1$; the chemical-potential equation requires no additional source. Since $\rho$ is affine in $\phi$, the manufactured phase source induces the corresponding affine-density source $\rho_\phi$ times the phase source; no separate density unknown is solved. The exact fields satisfy
\[
\bu_e=\boldsymbol0,
\qquad
\partial_n\phi_e=\partial_n\mu_e=0
\quad\text{on }\partial\Omega.
\]

We use $P_3$ elements for $\phi$ and $\mu$, $[P_3]^2$ for velocity, $P_2$ for pressure, and quadrature order $9$. For RM-SP and RM-PC, the base mesh parameter is $n_x=8$, and the phase-driven fixed-topology update uses four outer mesh steps with pseudo-time horizon $5\tau$. For HM-SP, $n_x=4$ with one initial adaptation pass, followed by common remeshing events at $t=0.04,0.08,0.12,0.16$ using the exact-Hessian metric with $h\in[0.08,0.3]$ and anisotropy bound $8$.

\begin{table}[!htbp]
\centering
\footnotesize
\setlength{\tabcolsep}{3.0pt}
\caption{Final-time temporal errors and observed orders for the three mesh/solver combinations.}
\label{tab:time-convergence}
\begin{tabular}{ccrrrrrrrr}
\toprule
& & \multicolumn{2}{c}{$\phi$} & \multicolumn{2}{c}{$\mu$} & \multicolumn{2}{c}{$\bu$} & \multicolumn{2}{c}{$p$}\\
\cmidrule(lr){3-4}\cmidrule(lr){5-6}\cmidrule(lr){7-8}\cmidrule(lr){9-10}
Method & $\tau$ & $E_\phi$ & ord. & $E_\mu$ & ord. & $E_{\bu}$ & ord. & $E_p$ & ord.\\
\midrule
RM-SP
& $1.00\!\times\!10^{-2}$ & $6.9721\!\times\!10^{-4}$ & --    & $1.1446\!\times\!10^{-3}$ & --    & $7.5545\!\times\!10^{-5}$ & --    & $3.1246\!\times\!10^{-3}$ & --    \\
& $5.00\!\times\!10^{-3}$ & $1.2979\!\times\!10^{-4}$ & 2.425 & $2.6709\!\times\!10^{-4}$ & 2.099 & $1.5582\!\times\!10^{-5}$ & 2.277 & $8.8044\!\times\!10^{-4}$ & 1.827 \\
& $2.50\!\times\!10^{-3}$ & $3.2100\!\times\!10^{-5}$ & 2.016 & $6.6982\!\times\!10^{-5}$ & 1.995 & $4.0176\!\times\!10^{-6}$ & 1.955 & $2.2079\!\times\!10^{-4}$ & 1.996 \\
& $1.25\!\times\!10^{-3}$ & $7.6739\!\times\!10^{-6}$ & 2.065 & $1.6221\!\times\!10^{-5}$ & 2.046 & $1.0345\!\times\!10^{-6}$ & 1.957 & $5.3034\!\times\!10^{-5}$ & 2.058 \\
\midrule
RM-PC
& $1.00\!\times\!10^{-2}$ & $6.9878\!\times\!10^{-4}$ & --    & $1.1454\!\times\!10^{-3}$ & --    & $8.2571\!\times\!10^{-5}$ & --    & $3.6623\!\times\!10^{-3}$ & --    \\
& $5.00\!\times\!10^{-3}$ & $1.2978\!\times\!10^{-4}$ & 2.429 & $2.6708\!\times\!10^{-4}$ & 2.101 & $1.8421\!\times\!10^{-5}$ & 2.164 & $8.9840\!\times\!10^{-4}$ & 2.027 \\
& $2.50\!\times\!10^{-3}$ & $3.2098\!\times\!10^{-5}$ & 2.016 & $6.6981\!\times\!10^{-5}$ & 1.995 & $4.6930\!\times\!10^{-6}$ & 1.973 & $2.2507\!\times\!10^{-4}$ & 1.997 \\
& $1.25\!\times\!10^{-3}$ & $7.6737\!\times\!10^{-6}$ & 2.065 & $1.6221\!\times\!10^{-5}$ & 2.046 & $1.1963\!\times\!10^{-6}$ & 1.972 & $5.4028\!\times\!10^{-5}$ & 2.059 \\
\midrule
HM-SP
& $1.00\!\times\!10^{-2}$ & $6.9757\!\times\!10^{-4}$ & --    & $1.1455\!\times\!10^{-3}$ & --    & $7.5176\!\times\!10^{-5}$ & --    & $3.1215\!\times\!10^{-3}$ & --    \\
& $5.00\!\times\!10^{-3}$ & $1.3009\!\times\!10^{-4}$ & 2.423 & $2.6727\!\times\!10^{-4}$ & 2.100 & $1.5265\!\times\!10^{-5}$ & 2.300 & $8.8183\!\times\!10^{-4}$ & 1.824 \\
& $2.50\!\times\!10^{-3}$ & $3.2422\!\times\!10^{-5}$ & 2.005 & $6.7442\!\times\!10^{-5}$ & 1.987 & $3.8893\!\times\!10^{-6}$ & 1.973 & $2.2310\!\times\!10^{-4}$ & 1.983 \\
& $1.25\!\times\!10^{-3}$ & $8.0139\!\times\!10^{-6}$ & 2.016 & $1.6764\!\times\!10^{-5}$ & 2.008 & $9.8200\!\times\!10^{-7}$ & 1.986 & $5.5539\!\times\!10^{-5}$ & 2.006 \\
\bottomrule
\end{tabular}
\end{table}

Table~\ref{tab:time-convergence} and Figure~\ref{fig:time-convergence}
show the expected asymptotic second-order temporal behavior for all four
primary variables. On the two finest refinement intervals, the observed
orders range from $1.955$ to $2.065$, while the coarsest level is not yet
fully in the asymptotic regime.

RM-SP and RM-PC produce nearly identical phase-field and
chemical-potential errors and comparable velocity and pressure errors.
HM-SP retains the same second-order behavior across the prescribed
topology-changing remeshing events. Thus, within this manufactured test, neither fixed-topology mesh motion nor the prescribed topology-changing remeshing events degrade the observed BDF2 temporal accuracy.

\begin{figure}[!htbp]
\centering
\includegraphics[width=0.33\textwidth]{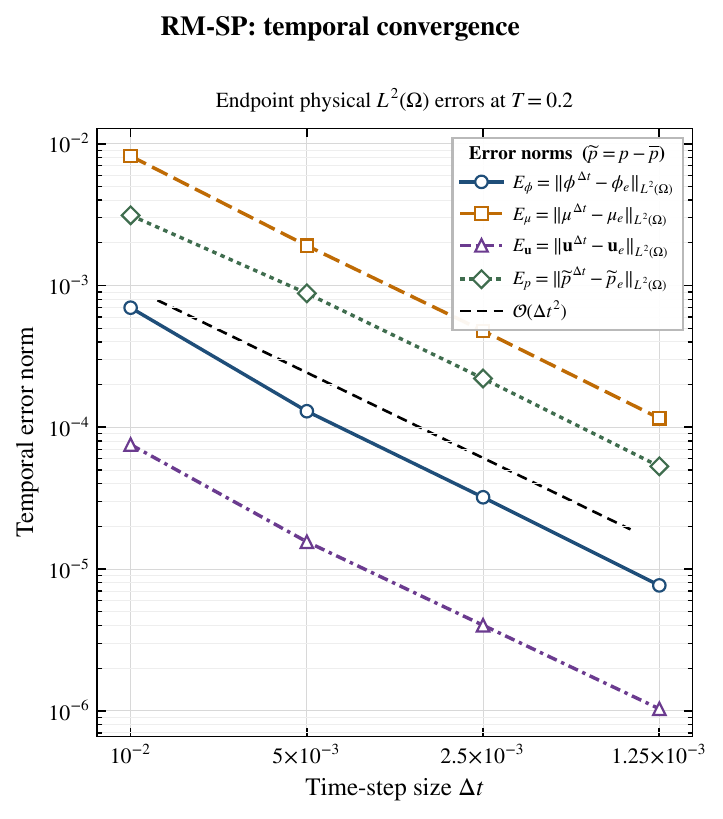}\hfill
\includegraphics[width=0.33\textwidth]{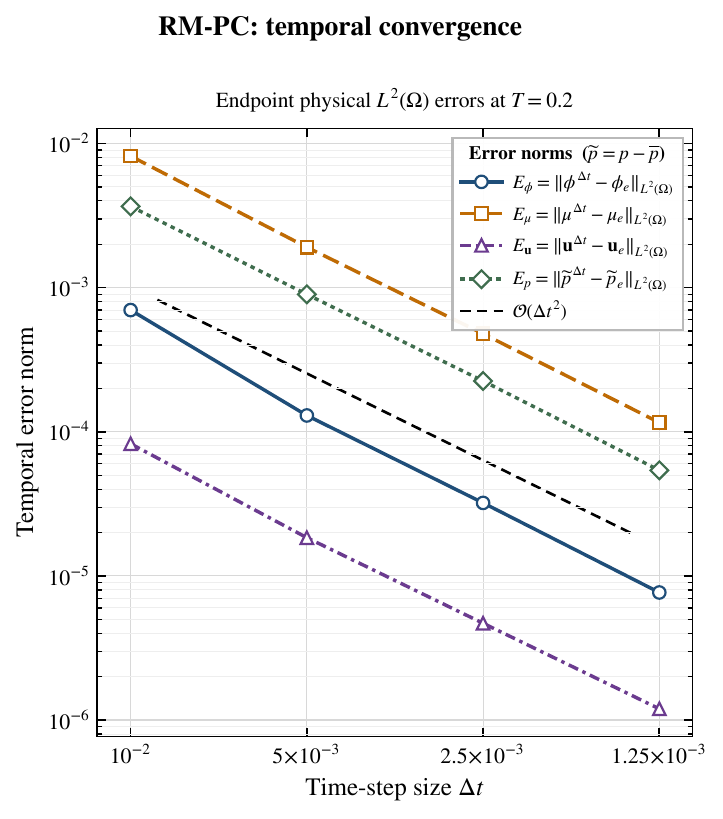}\hfill
\includegraphics[width=0.33\textwidth]{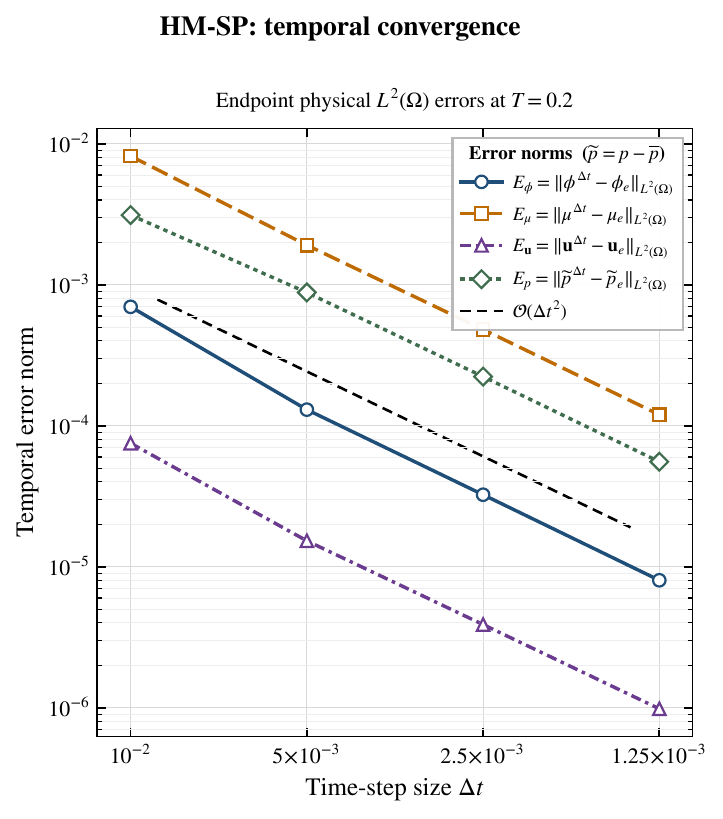}
\caption{Temporal convergence for the three methods: fixed-topology $r$-adaptive saddle-point (RM-SP, left), fixed-topology $r$-adaptive pressure-correction (RM-PC, center), and topology-changing $h$-adaptive saddle-point (HM-SP, right).}\label{fig:time-convergence}
\end{figure}

\subsection{Energy-stability tests}
\label{sec:energy-stability}

We next examine the discrete energy behavior in two unforced
matched-density tests: the relaxation of a six-lobed droplet
and a droplet-coalescence problem at $\Rey=1000$. In both tests,
\(
\rho_1=\rho_2=1,
\eta_1=\eta_2=1,
m(\phi)=1,
\)
with
\[
\bu=\boldsymbol0,\qquad
\partial_n\phi=\partial_n\mu=0
\quad\text{on }\partial\Omega.
\]
We use $P_2$ elements for $\phi$, $\mu$, and $\bu$, $P_1$ elements
for the pressure, and BDF2 time integration with a backward-Euler
first step. RM-SP and RM-PC start from the same $64\times64$ cross mesh
with $16384$ triangles and retain its connectivity. For HM-SP,
\(
h_{\min}=5\times10^{-4},
h_{\max}=0.25,
A_{\max}=60,
h_{\rm grad}=1.8,
\)
with adaptation checked every four physical time steps and at most
three passes per event.

\subsubsection{Six-lobed relaxation}\label{sec:six-lobed}

Using the six-lobed star-shaped initial geometry of Li et al.~\cite{LiLiuShenZheng2025},
the first test is performed on $\Omega=(0,1)^2$ with
\(
\Cn=0.005,
\Pe=10,
\Rey=\We=1,
\tau=10^{-3}\) and
\(T=1.5\).
The initial velocity is $\bu_0=\boldsymbol0$, and the phase field is
\[
\phi_0(x,y)
=
\tanh\!\left(
\frac{
0.25+0.1\cos\!\left(6\operatorname{atan2}(y-0.5,x-0.5)+\pi/2\right)-r
}{
\sqrt{2}\,\Cn
}
\right),
\qquad
r=\sqrt{(x-0.5)^2+(y-0.5)^2}.
\]

The left panel of Figure~\ref{fig:six-lobed-energy} shows closely matching physical-energy
trajectories for the three methods. The RM-SP and RM-PC physical energies are nonincreasing throughout the BDF2 regime. HM-SP undergoes $374$ topology-changing events; its physical energy has small changes at remeshing times, while the cross-mesh modified energy remains nonincreasing. The phase-mass drift is below $2.0\times10^{-15}$ in all three runs, the accepted-velocity weak-divergence residual below $5.5\times10^{-16}$, and the carrier-compatibility residual below $2.7\times10^{-15}$. For HM-SP, the cross-mesh phase-mass defect is below $5.6\times10^{-16}$ at topology-changing steps.

\begin{figure}[!htbp]
\centering
\includegraphics[width=0.49\linewidth]{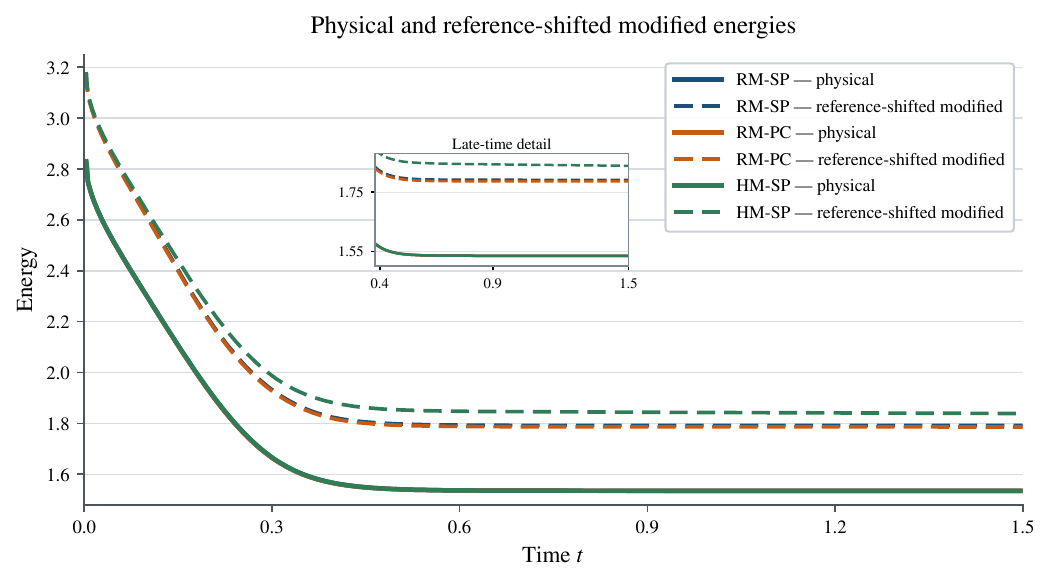}\hfill
\includegraphics[width=0.49\linewidth]{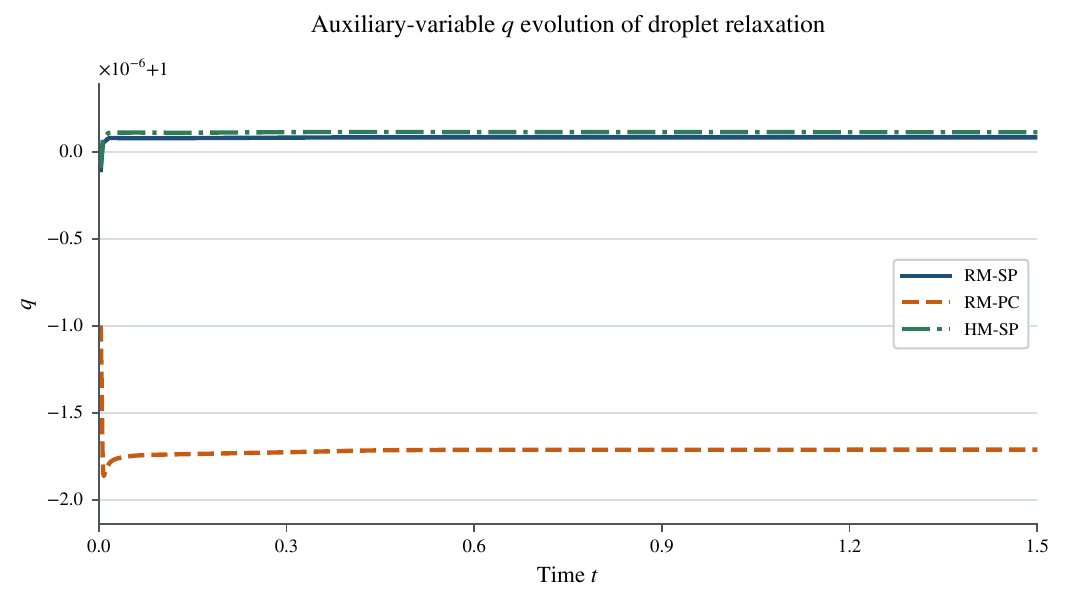}
\caption{
Energy and auxiliary-variable evolution of the six-lobed droplet relaxation for RM-SP,
RM-PC, and HM-SP\@. Left: solid curves denote the physical energy and
dashed curves the modified energy, with its additive constant omitted.
Right: the auxiliary variable $q$ remains close to $1$; the vertical axis
uses an offset of $1$ and a scale factor of $10^{-6}$.
}
\label{fig:six-lobed-energy}
\end{figure}

The right panel of Figure~\ref{fig:six-lobed-energy} shows that $q$ remains
close to $1$ throughout the relaxation. The largest visible departure is about
$1.9\times10^{-6}$ for RM-PC, with smaller departures for RM-SP and HM-SP,
including the run with repeated remeshing.

Figure~\ref{fig:six-lobed-evolution} shows the same relaxation under
the two mesh updates. The fixed-topology mesh follows the interface
through vertex redistribution, whereas HM-SP uses local refinement,
coarsening, and connectivity changes. Despite these distinct geometric
updates, both methods produce closely matching macroscopic relaxation toward an almost circular state.

\begin{figure}[!htbp]
\centering
\begin{tabular}{@{}r@{\hspace{2pt}}c@{}}
\raisebox{0.132\linewidth}{$t=0.01$} &
\includegraphics[width=0.80\linewidth,height=0.135\textheight,keepaspectratio]{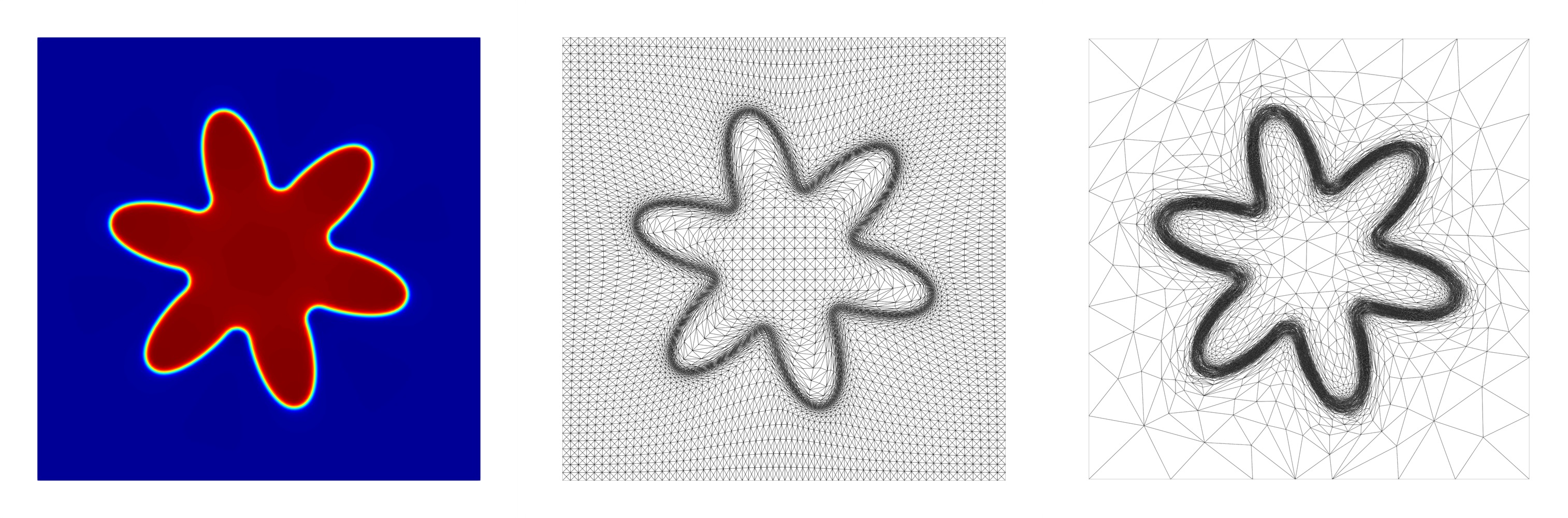} \\[-12pt]
\raisebox{0.132\linewidth}{$t=0.2$} &
\includegraphics[width=0.80\linewidth,height=0.135\textheight,keepaspectratio]{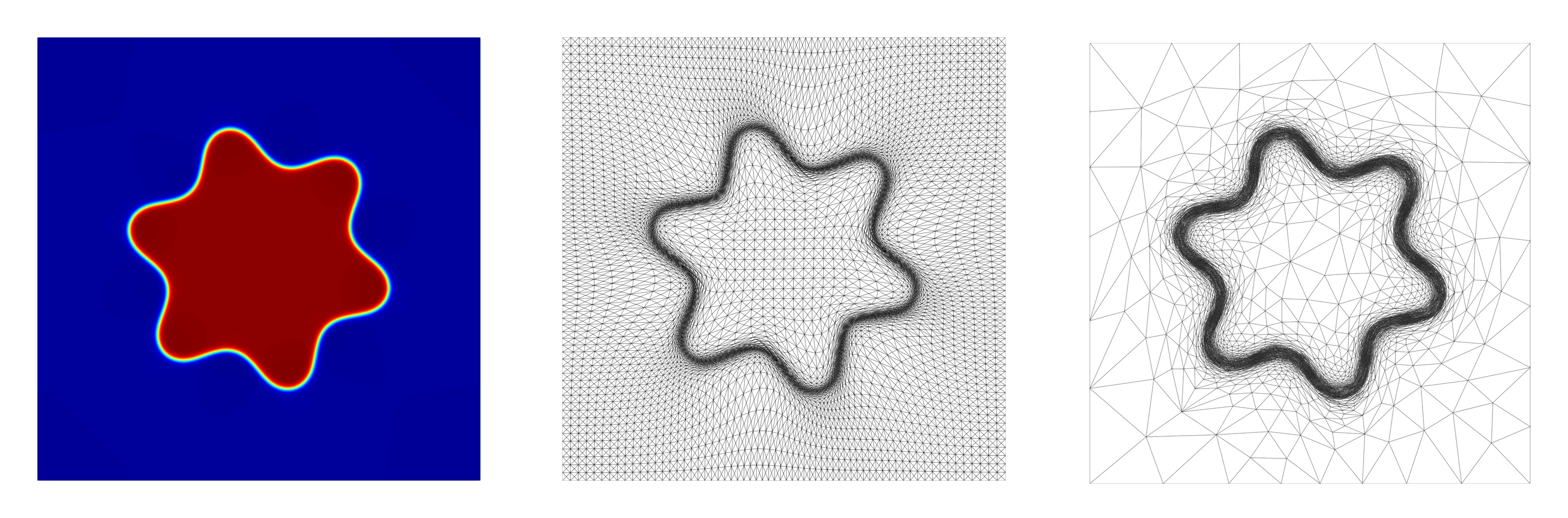} \\[-12pt]
\raisebox{0.132\linewidth}{$t=0.4$} &
\includegraphics[width=0.80\linewidth,height=0.135\textheight,keepaspectratio]{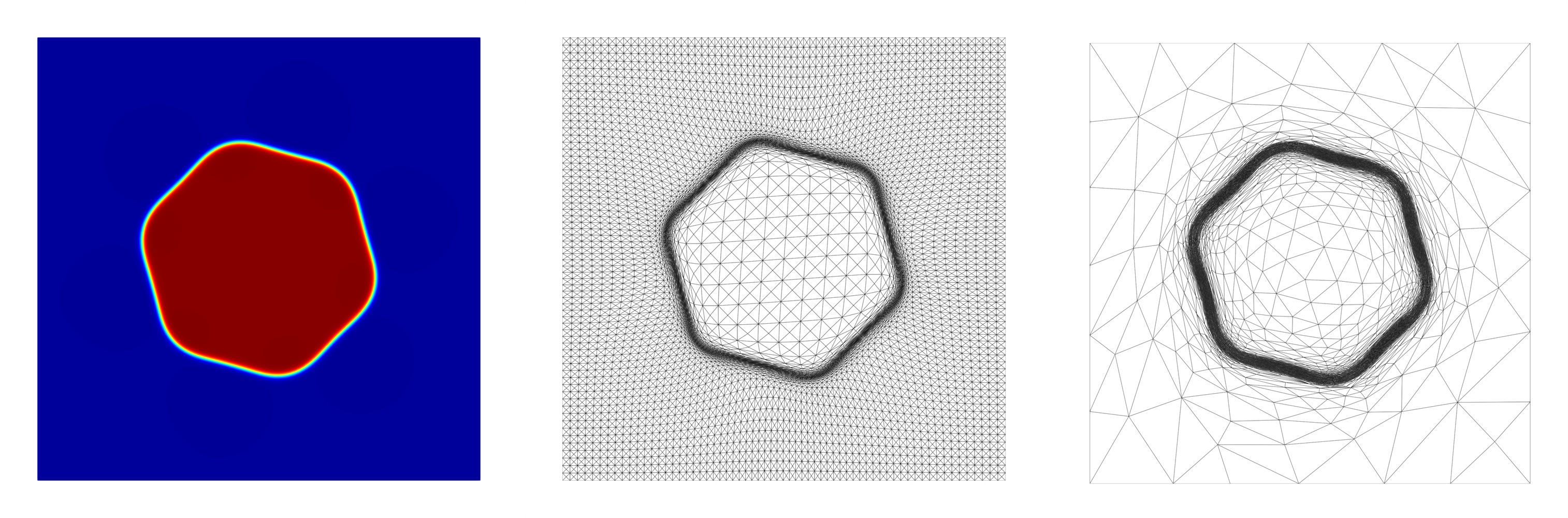} \\[-12pt]
\raisebox{0.132\linewidth}{$t=0.6$} &
\includegraphics[width=0.80\linewidth,height=0.135\textheight,keepaspectratio]{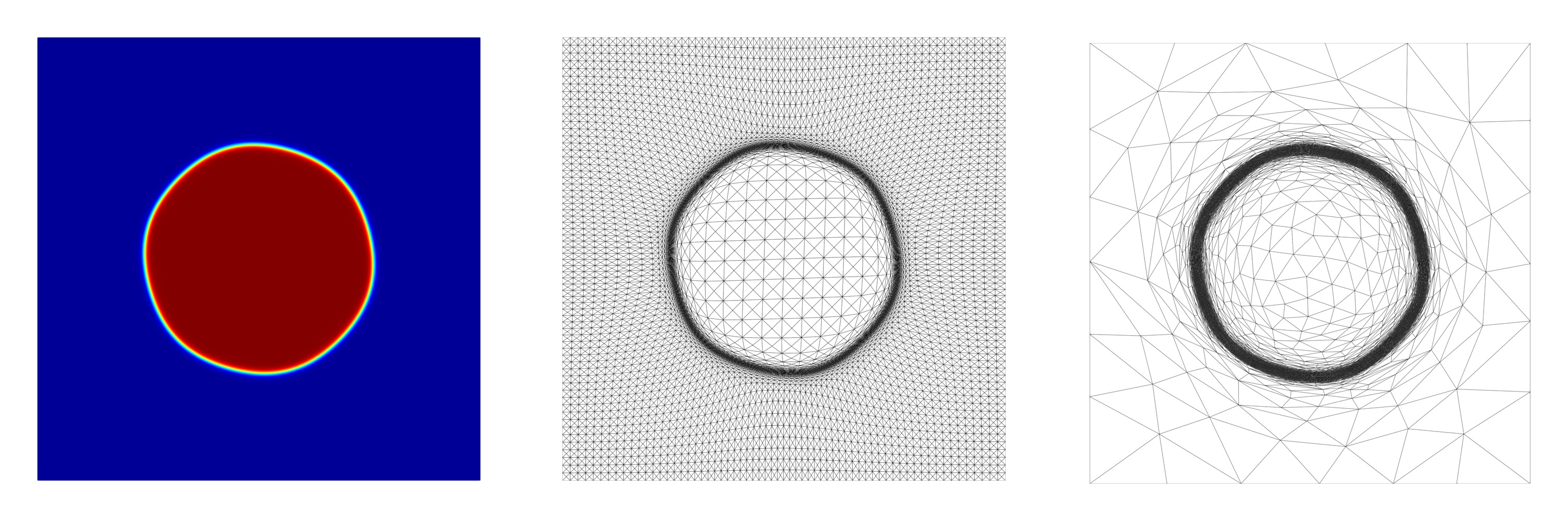} \\[-12pt]
\raisebox{0.132\linewidth}{$t=1.5$} &
\includegraphics[width=0.80\linewidth,height=0.135\textheight,keepaspectratio]{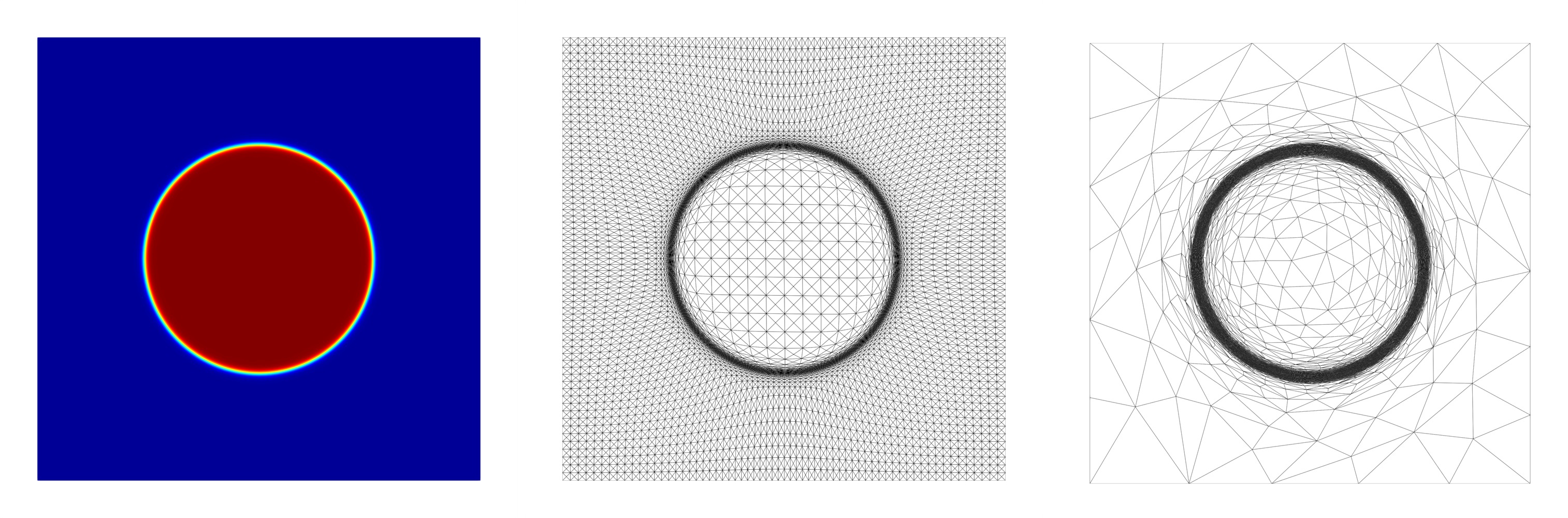}
\end{tabular}
\caption{
Evolution of the six-lobed droplet at
$t=0.01,\,0.2,\,0.4,\,0.6$ and $1.5$.
From left to right: RM-SP phase field, RM-SP fixed-topology moving mesh,
and HM-SP topology-changing $h$-adaptive mesh.
}
\label{fig:six-lobed-evolution}
\end{figure}

\subsubsection{Droplet coalescence}\label{sec:coalescence}

The second energy-stability scenario adapts the two-droplet merging configuration of Zhao and Han~\cite{ZhaoHan2021} and uses
\(
\Cn=0.005,
\Pe=\We=100,
\Rey=1000,
\tau=5\times10^{-3}\) and
\(T=10\).
Two droplets of radius $r_0=0.15$ are centered at
\[
\boldsymbol c_a=
\left(
0.5-\frac{r_0}{\sqrt2},
0.5+\frac{r_0}{\sqrt2}
\right),
\qquad
\boldsymbol c_b=
\left(
0.5+\frac{r_0}{\sqrt2},
0.5-\frac{r_0}{\sqrt2}
\right),
\]
and the initial phase field is
\[
\phi_0(x,y)
=
1-
\tanh\!\left(\frac{d_a-r_0}{\sqrt{2}\Cn}\right)
-
\tanh\!\left(\frac{d_b-r_0}{\sqrt{2}\Cn}\right),
\]
where
\(
d_a=|\boldsymbol x-\boldsymbol c_a|,
d_b=|\boldsymbol x-\boldsymbol c_b|,
\)
with $\bu_0=\boldsymbol0$.

Figure~\ref{fig:coalescence-evolution} shows the coalescence and subsequent
shape relaxation with the two mesh updates. The fixed-topology mesh
follows the evolving interface through vertex redistribution, whereas HM-SP
uses local refinement, coarsening, and connectivity changes. Despite these
distinct geometric updates, both meshes remain concentrated near the
diffuse interface, and the two methods produce closely matching macroscopic coalescence dynamics.

The left panel of Figure~\ref{fig:coalescence-energy} shows nearly coincident physical-energy
trajectories for RM-SP, RM-PC, and HM-SP; in this test the physical energies are observed to remain nonincreasing throughout the computation, while the modified storages follow the discrete dissipation law. Across the three runs, the phase-mass drift stays
below $3.0\times10^{-15}$ and the accepted-velocity weak-divergence
residual below $2.2\times10^{-14}$. For the carrier correction,
the carrier-compatibility residual remains below $3.4\times10^{-15}$. In particular,
HM-SP undergoes $499$ topology-changing events; over these remeshing steps,
the cross-mesh phase-mass defect remains below $6.7\times10^{-16}$ and the
carrier-compatibility residual below $1.9\times10^{-15}$. The corresponding phase-mass and carrier residuals therefore remain at numerical precision through repeated topology changes, while the modified energy decreases as in Section~4.

The right panel of Figure~\ref{fig:coalescence-energy} shows that $q$ also
stays very close to $1$ throughout coalescence and the subsequent relaxation.
The largest visible departure is approximately $4.2\times10^{-8}$ for RM-PC,
with still smaller deviations for RM-SP and HM-SP\@. The computed scalar therefore remains close to its continuous value even as the interface changes topology. This near-unity behavior is consistent with the continuous branch $q\equiv1$.

% Keep the two queued coalescence figures as top floats rather than allowing
% LaTeX to ship them as a float-only page; the RTI discussion can then use the
% remaining space below them.
\renewcommand{\floatpagefraction}{0.90}
\begin{figure}[!htbp]
\centering
\includegraphics[width=\textwidth]{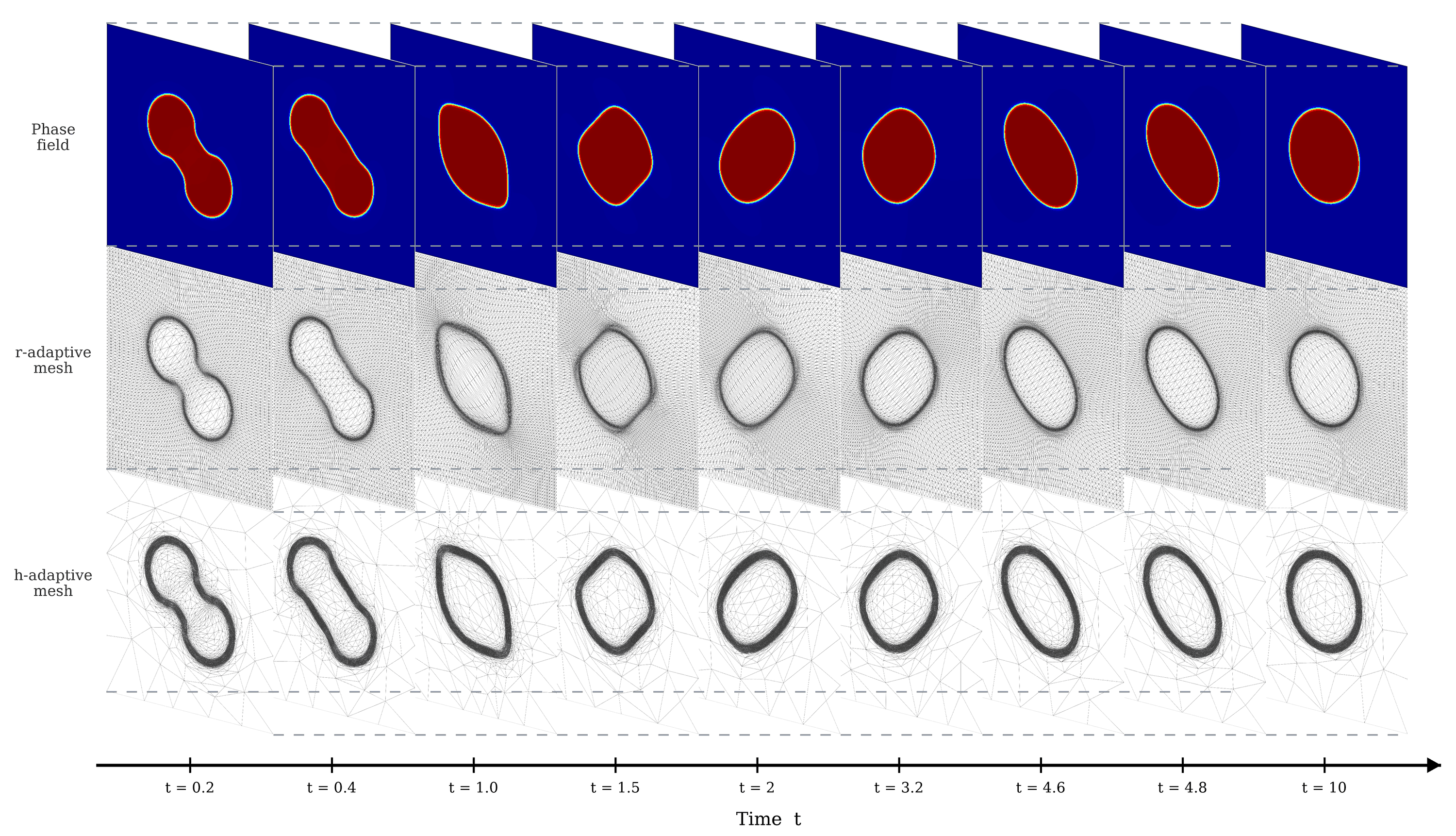}
\caption{Droplet-coalescence evolution at $t=0.2,\,0.4,\,1.0,\,1.5,\,2.0,\,3.2,\,4.6,\,4.8$, and $10$. Top: RM-SP
phase field. Middle: RM-SP fixed-topology $r$-adaptive mesh. Bottom: HM-SP topology-changing $h$-adaptive mesh.}
\label{fig:coalescence-evolution}
\end{figure}

\begin{figure}[!htbp]
\centering
\includegraphics[width=0.49\linewidth]{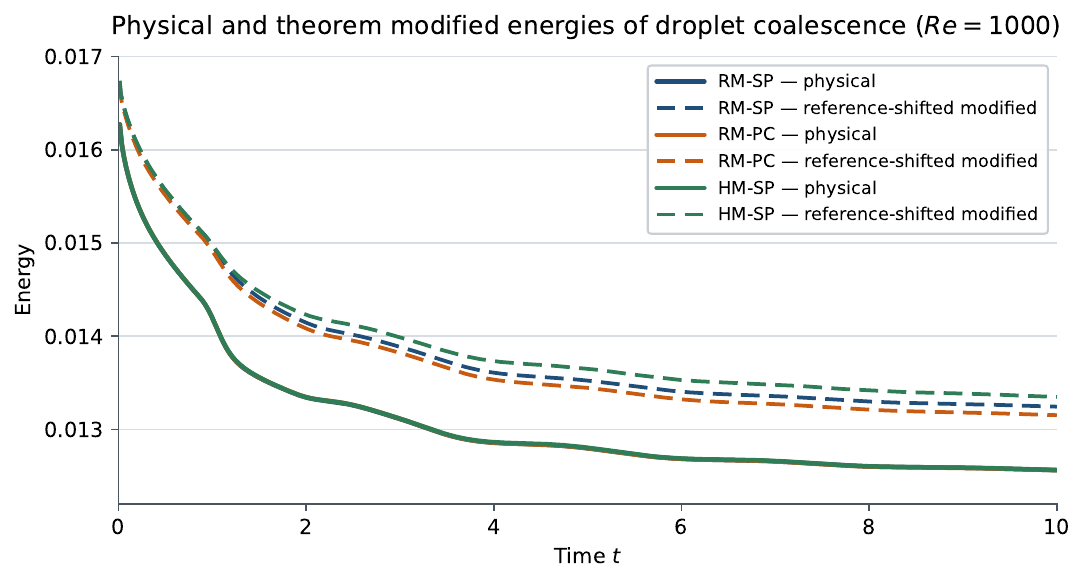}\hfill
\includegraphics[width=0.49\linewidth]{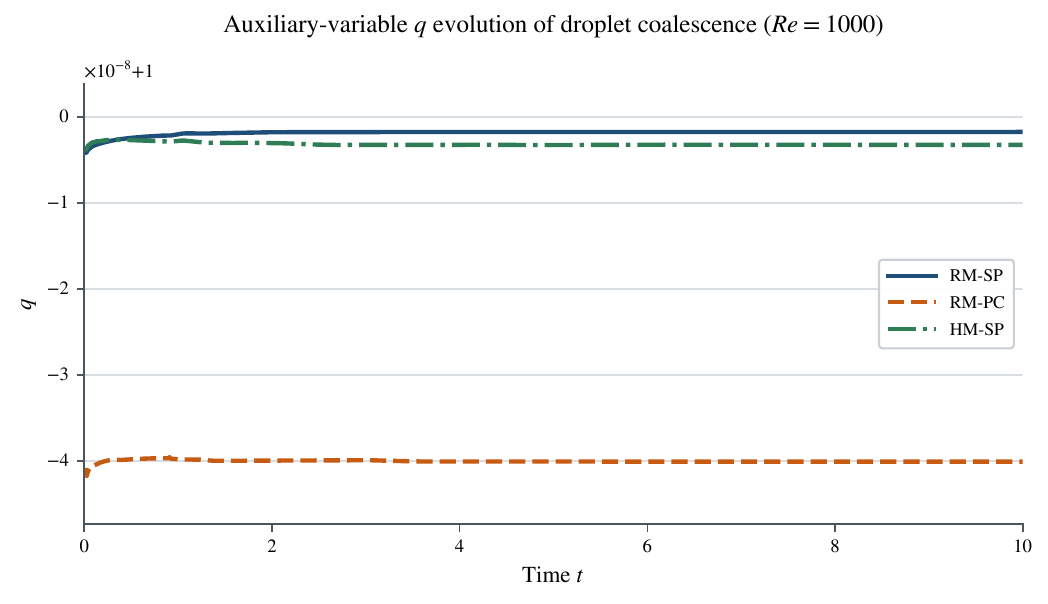}
\caption{Energy and auxiliary-variable evolution for the $\Rey=1000$ droplet-coalescence scenario
with RM-SP, RM-PC, and HM-SP\@. Left: solid curves denote the physical energy and dashed curves
the modified energy, with its additive constant omitted. Right: the auxiliary variable $q$
remains close to $1$; the vertical axis uses an offset of $1$ and a scale factor of $10^{-8}$.}
\label{fig:coalescence-energy}
\end{figure}

\subsection{Rayleigh--Taylor instability}
\label{sec:rti}

We next consider the two-dimensional single-mode Rayleigh--Taylor instability (RTI) on
\(
\Omega=(-0.5,0.5)\times(-1.5,1).
\)
The heavy fluid initially occupies the upper part of the domain and the
light fluid the lower part, with
\[
\phi_0(x,y)
=
\tanh\!\left(
\frac{y+0.1\cos(2\pi x)}
{\sqrt{2}\,\Cn}
\right),
\qquad
\bu_0=\boldsymbol0,\qquad p_0=0.
\]
No-slip conditions are imposed on the horizontal walls, free-slip
conditions on the vertical walls, and
$\partial_n\phi=\partial_n\mu=0$ on $\partial\Omega$.
We use the constant mobility $m(\phi)=1$, with
$\Cn=0.005$, $\Pe=100$, $\Fr=1$, and $\tau=10^{-3}$.
The spatial discretization uses $P_2$ elements for $\phi$, $\mu$, and
$\bu$, and $P_1$ elements for the pressure. The dimensionless parameter values for the two Atwood cases use the scaling adopted by Wang--Li--Wang~\cite{WangLiWang2024}:
\[
\begin{array}{c|cccc}
\mathrm{At} & \Rey & \We & T & \rho_H/\rho_L\\
\hline
0.50 & 3000 & 400\sqrt{2} & 2.5 & 3\\
0.75 & 7000 & 2800\sqrt{2}/3 & 2.0 & 7
\end{array}
\]
Since RM-SP and RM-PC have already exhibited nearly indistinguishable accuracy,
the RTI tests compare RM-SP and HM-SP to focus on the effect of the mesh update.

The RTI growth is quantified by the extreme positions of the reconstructed
$\phi_h=0$ interface,
\[
y_{\min}(t)=\min_{\{\phi_h=0\}}y,
\qquad
y_{\max}(t)=\max_{\{\phi_h=0\}}y,
\]
representing the descending heavy-fluid spike and rising light-fluid
bubble, respectively. We use the reference marker sets adopted by Wang--Li--Wang~\cite{WangLiWang2024}: for $\mathrm{At}=0.50$ they originate from Fu~\cite{Fu2020} and Li et al.~\cite{LiChengShenZhang2021}, while for $\mathrm{At}=0.75$ they originate from Li et al.~\cite{LiChengShenZhang2021}. These reference results were obtained with different formulations and nondimensionalizations and are used here only as trajectory references.

Figure~\ref{fig:rti-tip-trajectories} compares the computed spike and bubble
trajectories with the reference markers. For $\mathrm{At}=0.50$ ($\Rey=3000$), RM-SP and HM-SP are nearly
indistinguishable throughout the evolution. Their maximum differences
are only $1.05\times10^{-3}$ for the spike and
$2.61\times10^{-3}$ for the bubble, and the mean absolute errors
relative to the Li et al.~\cite{LiChengShenZhang2021} reference markers remain below $5.6\times10^{-3}$ and
$7.6\times10^{-3}$, respectively. The reference data of Fu~\cite{Fu2020} and Li et al.~\cite{LiChengShenZhang2021}
separate slightly at later times, while both adaptive methods follow
the same overall benchmark trajectory.

% Restore the document-wide threshold after the formerly float-only page has
% been assembled; the rising-bubble section applies its own setting below.
\renewcommand{\floatpagefraction}{0.78}

For $\mathrm{At}=0.75$ ($\Rey=7000$), the spike trajectories remain
closely superposed, with a maximum RM-SP/HM-SP difference of
$1.73\times10^{-3}$ and mean absolute errors of about
$1.1\times10^{-2}$ relative to the Li et al.~\cite{LiChengShenZhang2021} reference markers. A larger
difference appears only in the late-time bubble growth, where the maximum
inter-method separation reaches $3.58\times10^{-2}$; the corresponding
bubble errors are $5.82\times10^{-3}$ for RM-SP and
$1.71\times10^{-2}$ for HM-SP\@. Thus the descending-spike trajectory is only weakly affected by the choice between the two adaptive meshes over the tested interval, whereas the higher-Atwood bubble roll-up shows greater sensitivity to spatial adaptation. A small detached phase island is visible at late time after local separation from the rolled-up interface.

Figure~\ref{fig:rti-rmsp-snapshots} further shows the progressive
formation and roll-up of the descending spike, with the fixed-topology
mesh continuously redistributed along the increasingly deformed
interface.

\begin{figure}[!htbp]
\centering
\includegraphics[width=0.42\linewidth]{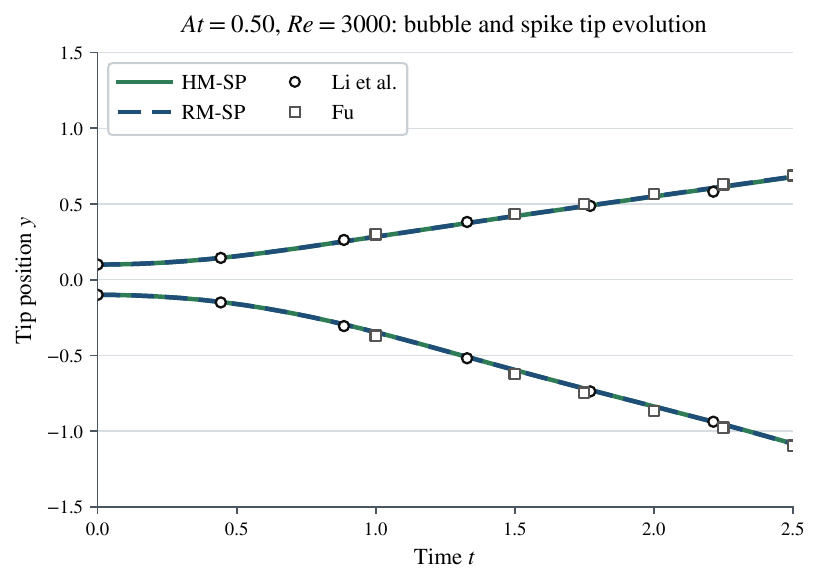}
\includegraphics[width=0.42\linewidth]{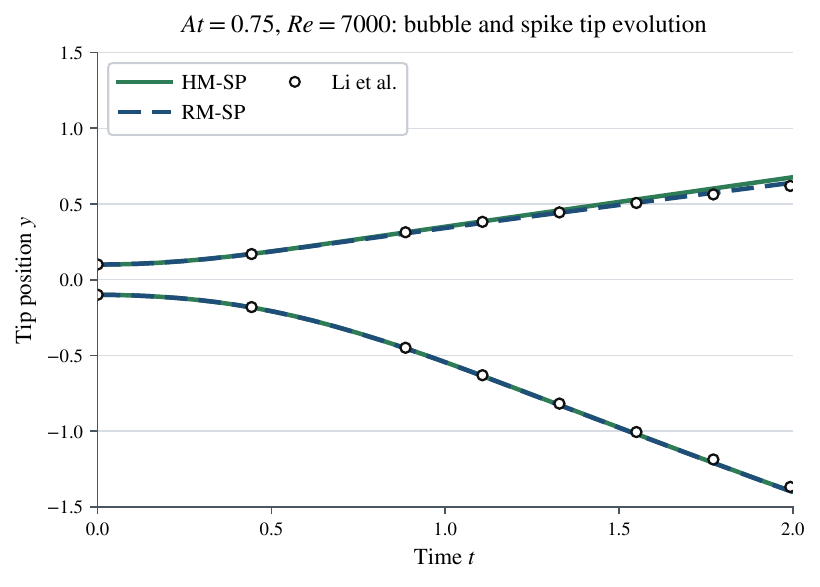}
\caption{Evolution of the RTI spike and bubble tips for
$\mathrm{At}=0.50$ (left) and $\mathrm{At}=0.75$ (right).
Solid and dashed curves denote HM-SP and RM-SP, respectively. For
$\mathrm{At}=0.50$, the markers show the reference data of Fu~\cite{Fu2020}
and Li et al.~\cite{LiChengShenZhang2021}; for $\mathrm{At}=0.75$, they show the data of
Li et al.~\cite{LiChengShenZhang2021}.}
\label{fig:rti-tip-trajectories}
\end{figure}

\begin{figure}[!htbp]
\centering
\begin{minipage}[b]{0.5\linewidth}
  \centering
  \includegraphics[width=\linewidth]{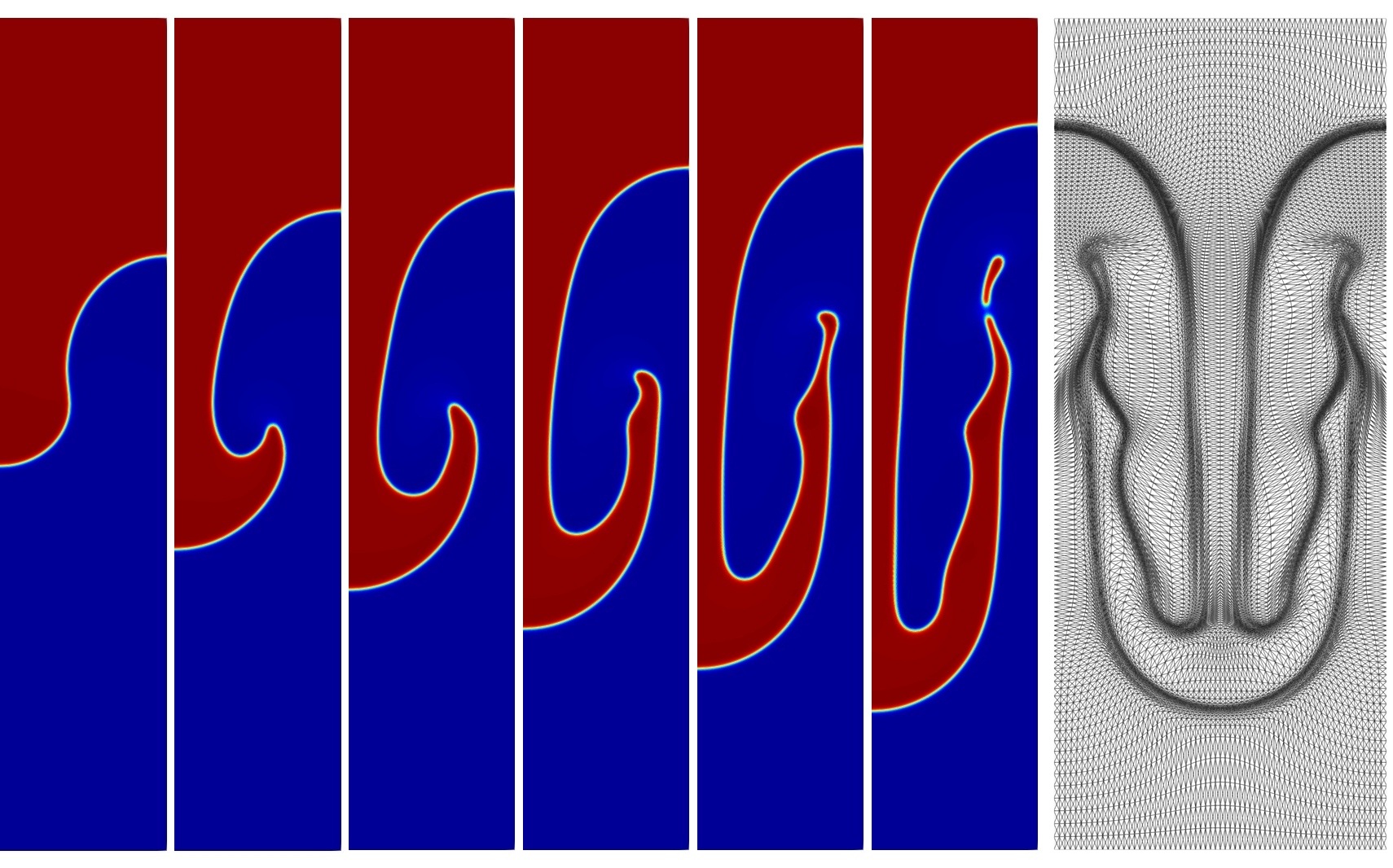}\\[-5pt]
  \small (a) $\mathrm{At}=0.50$ ($\Rey=3000$)
\end{minipage}\hfill
\begin{minipage}[b]{0.5\linewidth}
  \centering
  \includegraphics[width=\linewidth]{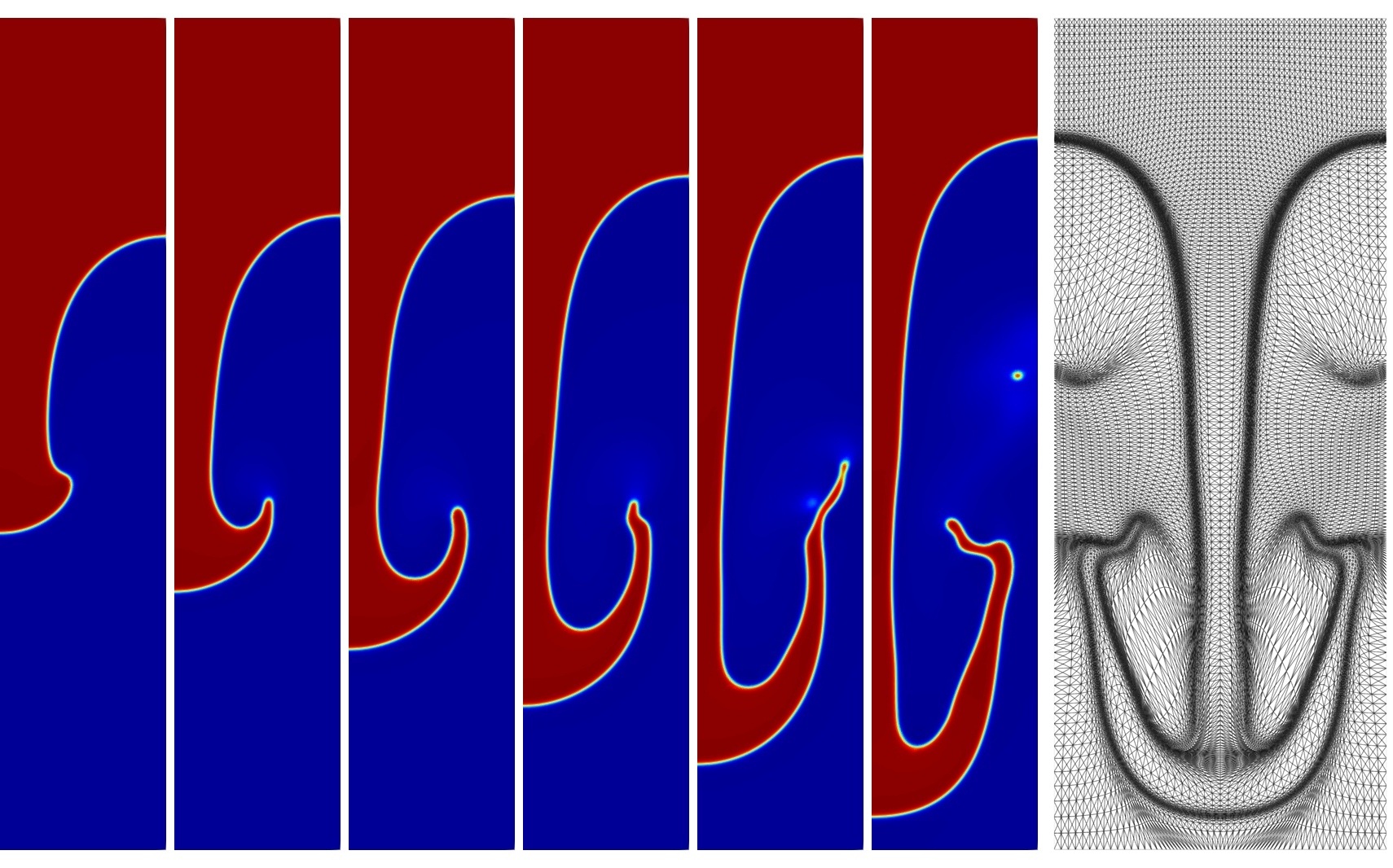}\\[-5pt]
  \small (b) $\mathrm{At}=0.75$ ($\Rey=7000$)
\end{minipage}
\caption{RM-SP snapshots for the Rayleigh--Taylor instability. In (a), the phase-field frames at $t=1.0,\,1.5,\,1.75,\,2.0,\,2.25$, and $2.5$ are followed by the RM-SP mesh at $t=2.5$. In (b), the phase-field frames at $t=1.0,\,1.2,\,1.4,\,1.6,\,1.8$, and $2.0$ are followed by the RM-SP mesh at $t=2.0$.}
\label{fig:rti-rmsp-snapshots}
\end{figure}

Across the four representative RTI runs, the phase-mass drift
remains below $3.2\times10^{-15}$ and the accepted-velocity
weak-divergence residual below $2.5\times10^{-15}$. The HM-SP runs
undergo $113$ and $565$ topology-changing adaptation events for
$\mathrm{At}=0.50$ and $0.75$, respectively; their cross-mesh
phase-mass defects remain below $7.8\times10^{-16}$, while the carrier-compatibility residual remains below $1.1\times10^{-13}$. Hence, in these runs, the large interfacial deformation and repeated topology changes leave the measured phase-conservation, accepted-velocity incompressibility, and carrier-compatibility residuals at numerical precision.

% Keep the rising-bubble figures compact and top-aligned when LaTeX creates a float-only page.
\setlength{\textfloatsep}{8pt plus 2pt minus 2pt}
\setlength{\floatsep}{5pt plus 2pt minus 2pt}
\setcounter{topnumber}{3}
\renewcommand{\topfraction}{0.96}
\renewcommand{\textfraction}{0.04}
% Keep the rising-bubble float queue compact so that the Case 2 figures
% can share a page instead of being flushed one-by-one at the final barrier.
\renewcommand{\floatpagefraction}{0.90}
\makeatletter
\setlength{\@fptop}{0pt}
\setlength{\@fpsep}{5pt}
\setlength{\@fpbot}{0pt plus 1fil}
\makeatother
\subsection{Rising-bubble}
\label{sec:rising-bubble}

We consider the two-dimensional rising-bubble benchmark of Hysing et al.~\cite{HysingEtAl2009} on
\(
\Omega=(0,1)\times(0,2), 0<t\le T=3.
\)
The initial bubble is a circle of radius $R_0=0.25$ centered at
$\boldsymbol x_0=(0.5,0.5)$. In the present diffuse-interface
formulation, the corresponding equilibrium phase profile is prescribed as
\[
\phi_0(x,y)
=
\tanh\!\left(
\frac{
\sqrt{(x-0.5)^2+(y-0.5)^2}-R_0
}{
\sqrt{2}\,\Cn
}
\right),
\]
so that $\phi<0$ identifies the lighter bubble phase, and
\(
\bu_0=\boldsymbol0, p_0=0.
\)
The phase field and chemical potential satisfy the no-flux conditions
\(\partial_n\phi=\partial_n\mu=0\) on \(\partial\Omega\).
Following the benchmark configuration, no-slip conditions are imposed
on the horizontal walls, \(\bu=\boldsymbol0\) on \(y=0,2\),
whereas the vertical walls are free slip,
\(\bu\cdot\boldsymbol n=0\) and
\(\bigl(2\eta D(\bu)\boldsymbol n\bigr)\cdot\boldsymbol t=0\)
on \(x=0,1\). The pressure is normalized by \(\int_\Omega p\,dx=0\),
and gravity uses the convention stated at the beginning of this section, with \(\Fr=1\).
The two benchmark cases differ only in their material ratios and dimensionless parameters.
For each tested interface thickness we set
\[
\Pe=\frac{\We}{\Rey\,\Cn},
\qquad
\tau=
\begin{cases}
10^{-3}, & \Cn\ge 0.005,\\
5\times10^{-4}, & \Cn<0.005,
\end{cases}
\]
for all three methods. The values of $\Rey$ and $\We$ follow the nondimensionalization of the
present AGG formulation and therefore differ numerically from the conventional dimensionless
parameters quoted for the original Hysing benchmark. The tested values are summarized in
Table~\ref{tab:rising-bubble-parameters}. RM-SP and RM-PC employ the phase-driven gradient
metric on fixed-topology moving meshes, whereas HM-SP uses the absolute-interface-gradient
metric with topology-changing adaptation.

\begin{table}[!htbp]
\centering
\footnotesize
\setlength{\tabcolsep}{4.5pt}
\caption{Dimensionless parameters used in the rising-bubble tests.}
\label{tab:rising-bubble-parameters}
\begin{tabular}{ccccccc}
\toprule
Case & $\rho_1/\rho_2$ & $\eta_1/\eta_2$ & $\Rey$ & $\We$ & $\Cn$ & $\Pe$ \\
\midrule
1 & 10 & 10 & 98.9949 & 37.7124
  & $0.010,\ 0.005,\ 0.0025$
  & $38.0952,\ 76.1905,\ 152.3810$ \\
2 & 1000 & 100 & 98.9949 & 471.4045
  & $0.010,\ 0.005,\ 0.0025,\ 0.00125$
  & $476.1905,\ 952.3810,\ 1904.7619,\ 3809.5238$ \\
\bottomrule
\end{tabular}
\end{table}

The Hysing benchmark is a sharp-interface problem; $\Cn$ and $\Pe$
enter only through the present diffuse-interface approximation. All
computations use $P_2$ elements for $\phi$, $\mu$, and $\bu$, $P_1$
elements for the pressure, and BDF2 time integration with a
backward-Euler first step. RM-SP and RM-PC use identical
fixed-topology meshes at each common resolution: $32\times64$ for
$\Cn=0.01$ and $64\times128$ for $\Cn=0.005$; the Case~2 runs at
$\Cn=0.0025$ also use $64\times128$. HM-SP employs topology-changing interface-driven
adaptation, with the case-specific controls given below.

For both benchmark cases, we use the MooNMD reference data reported by Hysing et al.~\cite{HysingEtAl2009}. Following the benchmark quantities of Hysing et al. and their phase-field implementation in Wang--Li--Wang~\cite{WangLiWang2024}, the bubble dynamics are quantified
by the centroid height, circularity, and mean rise velocity, defined respectively by
\[
y_c(t)
=\frac{\displaystyle\int_{\{\phi_h<0\}} y\,dx}
     {\displaystyle\int_{\{\phi_h<0\}} dx},
\qquad
\mathcal C(t)
=\frac{2\sqrt{\displaystyle\pi\int_{\{\phi_h<0\}}dx}}
     {\displaystyle\int_{\{\phi_h=0\}}ds},
\qquad
V_c(t)
=\frac{\displaystyle\int_{\{\phi_h<0\}} u_{h,2}\,dx}
     {\displaystyle\int_{\{\phi_h<0\}}dx}.
\]
Here $\phi_h=0$ represents the reconstructed bubble interface. For the
quantitative comparison, we report the final centroid position
$y_c(3)$, the minimum circularity
\(
\mathcal C_{\min}=\min_{0<t\le3}\mathcal C(t),
\)
and the maximum rise velocity
\(
V_{\max}=\max_{0<t\le3}V_c(t),
\)
together with the corresponding occurrence times. The complete time
histories of $y_c$, $\mathcal C$, and $V_c$ and the reconstructed
$\phi_h=0$ contour at $t=3$ are compared with the MooNMD reference data.

\subsubsection{Case 1}
\label{sec:rising-bubble-case1}

Case~1 has the moderate density and viscosity ratios
$\rho_1/\rho_2=\eta_1/\eta_2=10$. For HM-SP,
$h\in[6\times10^{-4},0.2]$, with anisotropy bound $40$ and
$h_{\rm grad}=1.8$; adaptation is checked every four time steps with
at most two passes per event.

\begin{figure}[!htbp]
\centering
\includegraphics[width=0.75\linewidth]{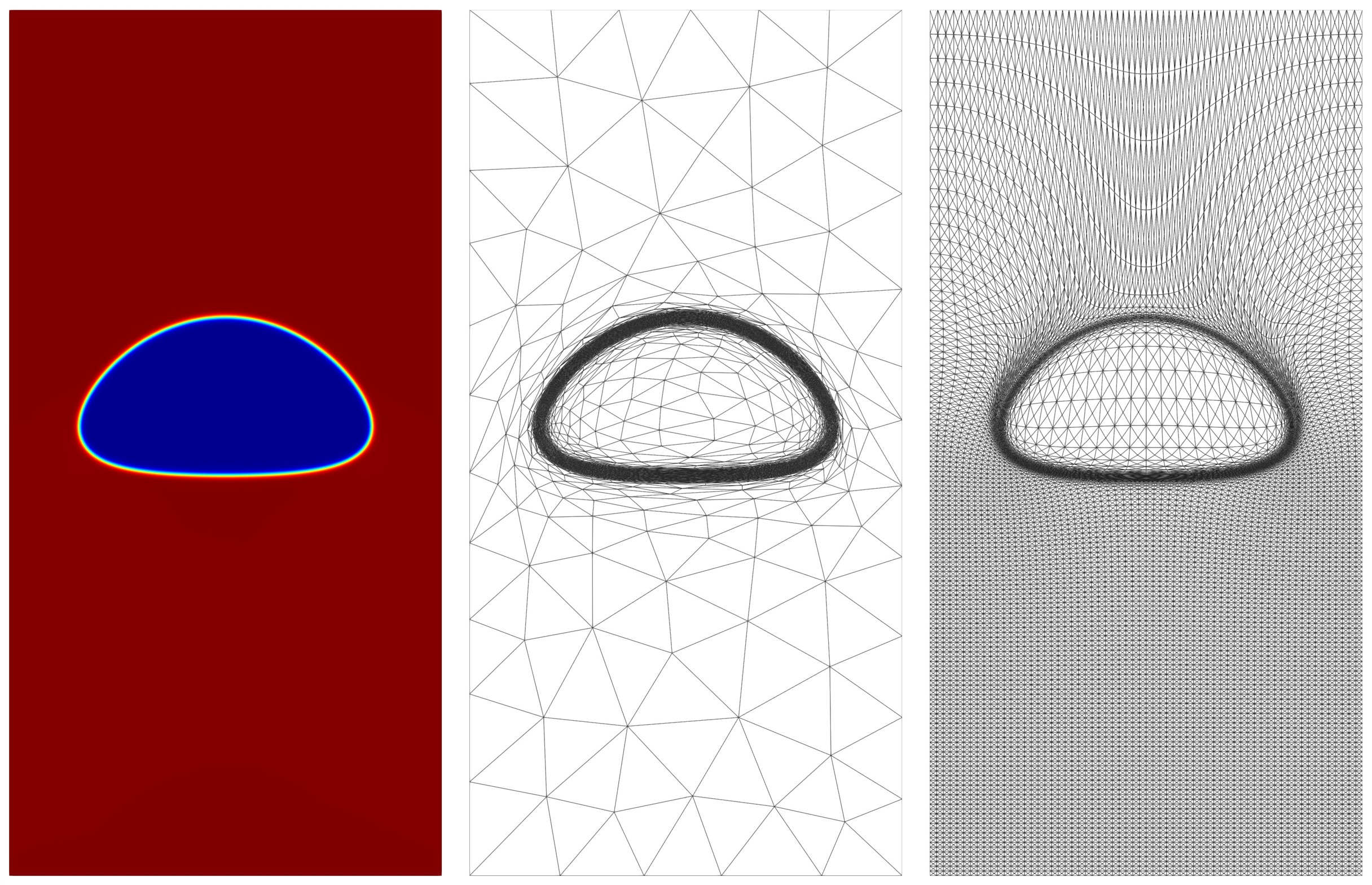}
\caption{Case~1 at $t=3$: HM-SP phase field (left), HM-SP topology-changing $h$-adaptive mesh (center),
and RM-SP fixed-topology $r$-adaptive mesh (right), all at $\Cn=0.005$.}
\label{fig:rising-bubble-case1-state}
\end{figure}

\begin{figure}[!htbp]
\centering
\setlength{\tabcolsep}{-2pt}
\begin{tabular}{@{}ccc@{}}
\includegraphics[width=0.33\linewidth]{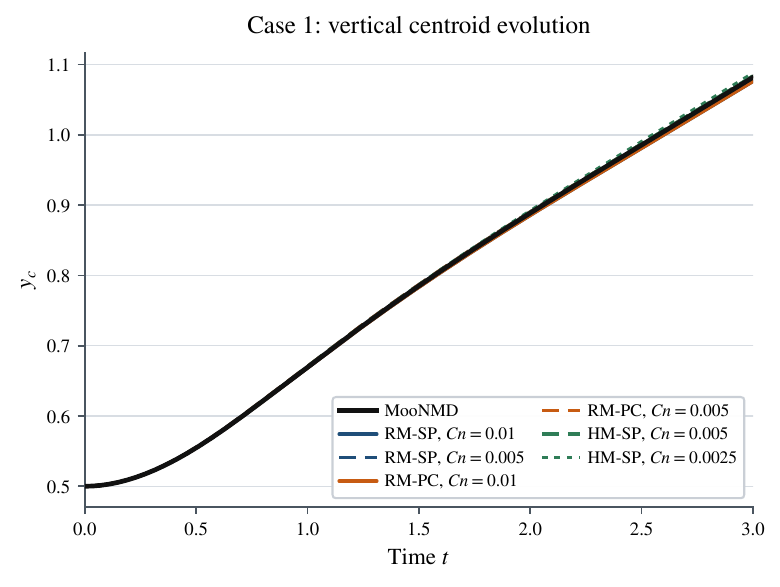} &
\includegraphics[width=0.33\linewidth]{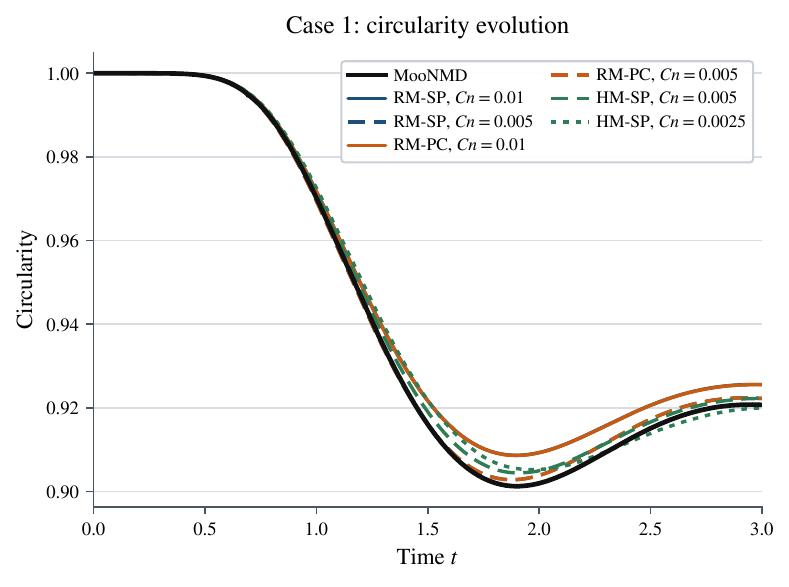} &
\includegraphics[width=0.33\linewidth]{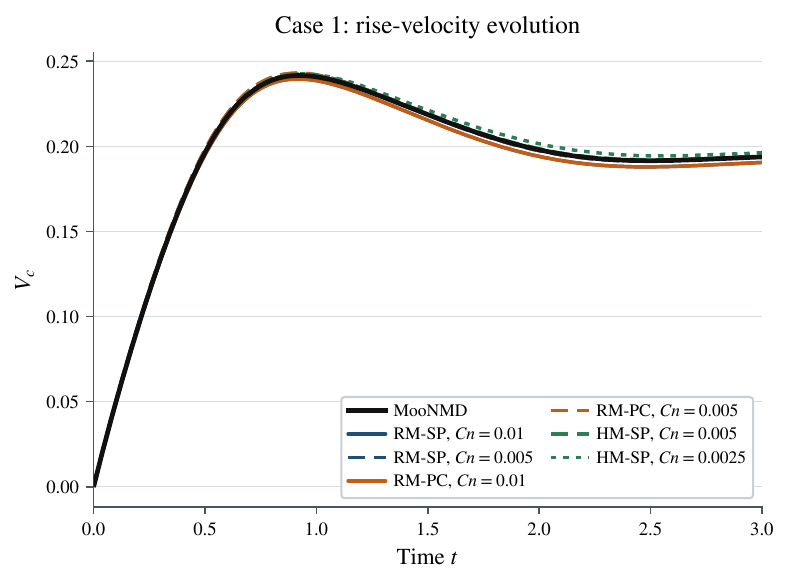}
\end{tabular}
\caption{Case~1 time histories of the bubble centroid height (left),
circularity (center), and mean rise velocity (right). Black curves
denote the MooNMD data of Hysing et al. The RM-SP and RM-PC histories are
nearly superposed.}
\label{fig:rising-bubble-case1-evolution}
\end{figure}

At $\Cn=0.005$, all three methods closely reproduce the reference
dynamics. The RM-SP and RM-PC histories are nearly superposed, as are
their terminal contours; their terminal-centroid and minimum-circularity
deviations are about $0.14\%$ and $0.17\%$, respectively. HM-SP gives
comparable centroid and circularity predictions and the closest maximum
rise velocity, with a deviation of $0.31\%$. The corresponding benchmark
quantities are listed in Table~\ref{tab:bubble-case1}. Figures~\ref{fig:rising-bubble-case1-state}
and~\ref{fig:rising-bubble-case1-evolution} therefore show essentially the
same bubble evolution under fixed-topology vertex motion and
topology-changing adaptation.

\begin{table}[!htbp]
\centering
\footnotesize
\setlength{\tabcolsep}{6.0pt}
\caption{Benchmark quantities for rising-bubble Case 1.}
\label{tab:bubble-case1}
\begin{tabular}{lccccccc}
\toprule
Method & $\Cn$
& $y_c(3)$
& $\mathcal C_{\min}$
& $t_{\mathcal C}$
& $V_{\max}$
& $t_V$ \\
\midrule
MooNMD& --& 1.081699 & 0.901252 & 1.900 & 0.241658 & 0.924\\
RM-SP& 0.01000& 1.076076 & 0.908642 & 1.897& 0.239589 & 0.917\\
RM-SP& 0.00500& 1.083260 & 0.902806 & 1.878& 0.243008 & 0.915\\
RM-PC& 0.01000& 1.076028 & 0.908615 & 1.897& 0.239577 & 0.917\\
RM-PC& 0.00500& 1.083251 & 0.902806 & 1.878& 0.243000 & 0.915\\
HM-SP& 0.00500& 1.083603 & 0.904439 & 1.904& 0.242414 & 0.932\\
HM-SP& 0.00250& 1.087693 & 0.905213 & 1.965& 0.242666 & 0.940\\
\bottomrule
\end{tabular}
\end{table}

\subsubsection{Case 2}
\label{sec:rising-bubble-case2}

Case~2 increases the density and viscosity ratios to
$\rho_1/\rho_2=1000$ and $\eta_1/\eta_2=100$, producing substantially
stronger deformation. For HM-SP, $h\in[4\times10^{-4},0.25]$, with
anisotropy bound $60$ and $h_{\rm grad}=1.8$; adaptation is checked every
five time steps with at most three passes per event.

\begin{figure}[!t]
\centering
\includegraphics[width=0.76\linewidth]{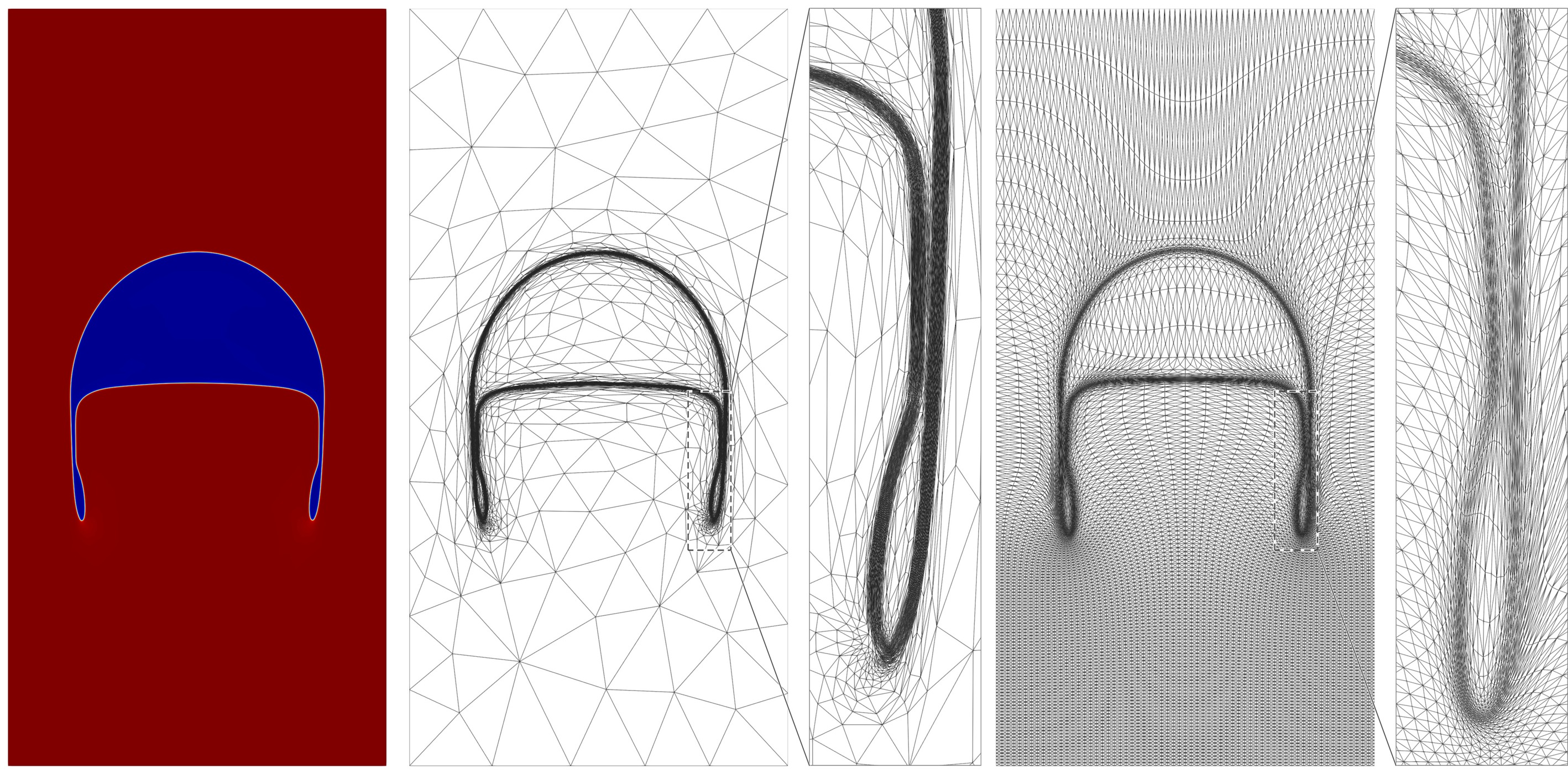}
\caption{Case~2 at $t=3$: HM-SP phase field (left), HM-SP topology-changing $h$-adaptive mesh (middle), and RM-SP
fixed-topology $r$-adaptive mesh (right). The mesh panels include their right-lower interface enlargements.}
\label{fig:rising-bubble-case2-state}
\end{figure}

\begin{figure}[!t]
\centering
\setlength{\tabcolsep}{-2pt}
\begin{tabular}{@{}ccc@{}}
\includegraphics[width=0.31\linewidth]{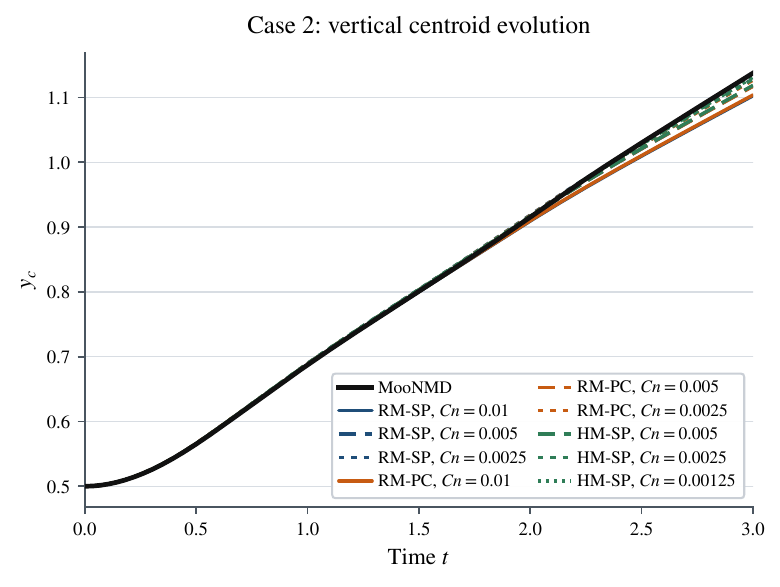} &
\includegraphics[width=0.31\linewidth]{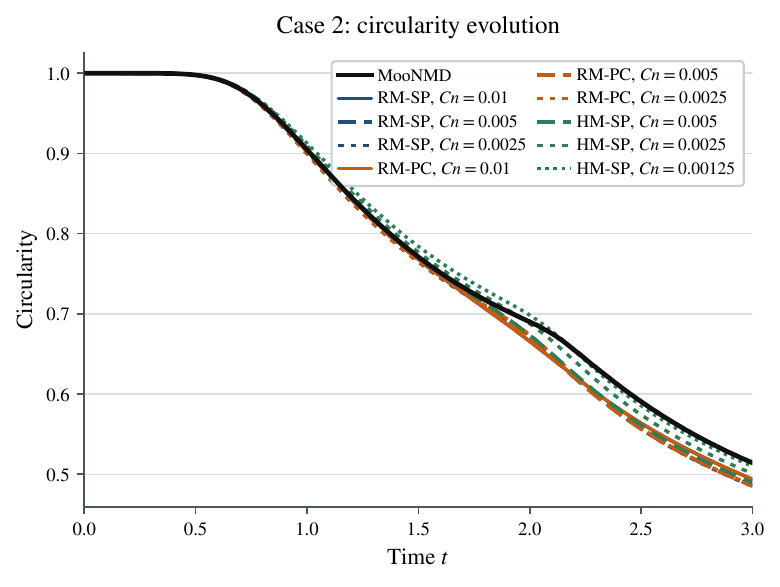} &
\includegraphics[width=0.31\linewidth]{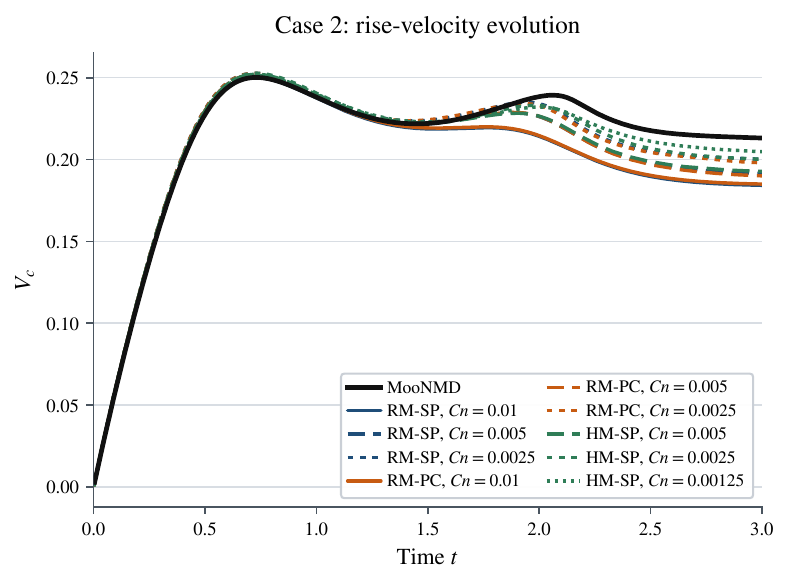}
\end{tabular}
\caption{Case~2 time histories of the bubble centroid height (left),
circularity (center), and mean rise velocity (right). Black curves
denote the MooNMD data of Hysing et al.; the RM-SP and RM-PC histories at
common $\Cn$ are nearly superposed.}
\label{fig:rising-bubble-case2-evolution}
\end{figure}

Despite the much stronger deformation, all three methods reproduce
the principal benchmark dynamics. At $\Cn=0.0025$, the RM-SP and
RM-PC histories and terminal contours remain closely superposed; their
terminal-centroid deviations are below $0.96\%$, and their maximum
rise velocities differ from the MooNMD data by about $1\%$. Figures~\ref{fig:rising-bubble-case2-state}
and~\ref{fig:rising-bubble-case2-evolution} show the corresponding terminal states and time histories.
The minimum circularity occurs at $t=3$ in all runs, consistently with
the continued deformation of the reference bubble. Table~\ref{tab:bubble-case2}
also shows a distinct resolution trend in the skirt region: the RM values
change only marginally once $\Cn\le0.005$, whereas HM-SP continues to
approach the MooNMD circularity as $\Cn$ decreases. For the present metrics and resolutions, this trend suggests that the fixed-topology $r$-adaptive mesh is more resolution-limited in the thin, highly stretched skirt. Figure~\ref{fig:rising-bubble-terminal-contours}
resolves the corresponding interface differences.

Across the six representative rising-bubble computations, the phase-mass drift remains below $2.4\times10^{-15}$ and the
accepted-velocity weak-divergence residual below $5.2\times10^{-13}$.
The HM-SP runs undergo $749$ and $1199$ topology-changing adaptation
events in Cases~1 and~2, respectively, while their carrier-compatibility residuals remain below $3.3\times10^{-14}$ and the
cross-mesh phase-mass defects below $1.6\times10^{-15}$. The close RM-SP/RM-PC agreement and the benchmark comparisons in
Figures~\ref{fig:rising-bubble-case1-state}--%
\ref{fig:rising-bubble-terminal-contours} are accompanied by phase-conservation, weak-incompressibility, and carrier residuals at the $10^{-13}$--$10^{-15}$ levels for both mesh updates.

\begin{table}[!htbp]
\centering
\footnotesize
\renewcommand{\arraystretch}{0.90}
\setlength{\tabcolsep}{5.0pt}
\caption{Benchmark quantities for rising-bubble Case 2.}
\label{tab:bubble-case2}
\begin{tabular}{lccccccc}
\toprule
Method & $\Cn$
& $y_c(3)$
& $\mathcal C_{\min}$
& $t_{\mathcal C}$
& $V_{\max}$
& $t_V$ \\
\midrule
MooNMD& --& 1.137580 & 0.514444  & 3.000& 0.250218 & 0.732\\
RM-SP& 0.01000& 1.102672 & 0.493810 & 3.000& 0.250081 & 0.729\\
RM-SP& 0.00500& 1.117772 & 0.485369 & 3.000& 0.251902 & 0.725\\
RM-SP& 0.00250& 1.128512 & 0.485054 & 3.000& 0.252717 & 0.723\\
RM-PC& 0.01000& 1.103885 & 0.493774 & 3.000& 0.250195 & 0.730\\
RM-PC& 0.00500& 1.117301 & 0.485459 & 3.000& 0.251920 & 0.725\\
RM-PC& 0.00250& 1.126765 & 0.486453 & 3.000& 0.252672 & 0.723\\
HM-SP& 0.00500& 1.118742 & 0.489709 & 3.000& 0.252159 & 0.728\\
HM-SP& 0.00250& 1.128384 & 0.500945 & 3.000& 0.252815 & 0.720\\
HM-SP& 0.00125& 1.131203 & 0.509982 & 3.000& 0.251524 & 0.732\\
\bottomrule
\end{tabular}
\end{table}

\begin{figure}[!t]
\centering
\begin{minipage}[b]{0.47\linewidth}
  \centering
  \includegraphics[width=\linewidth]{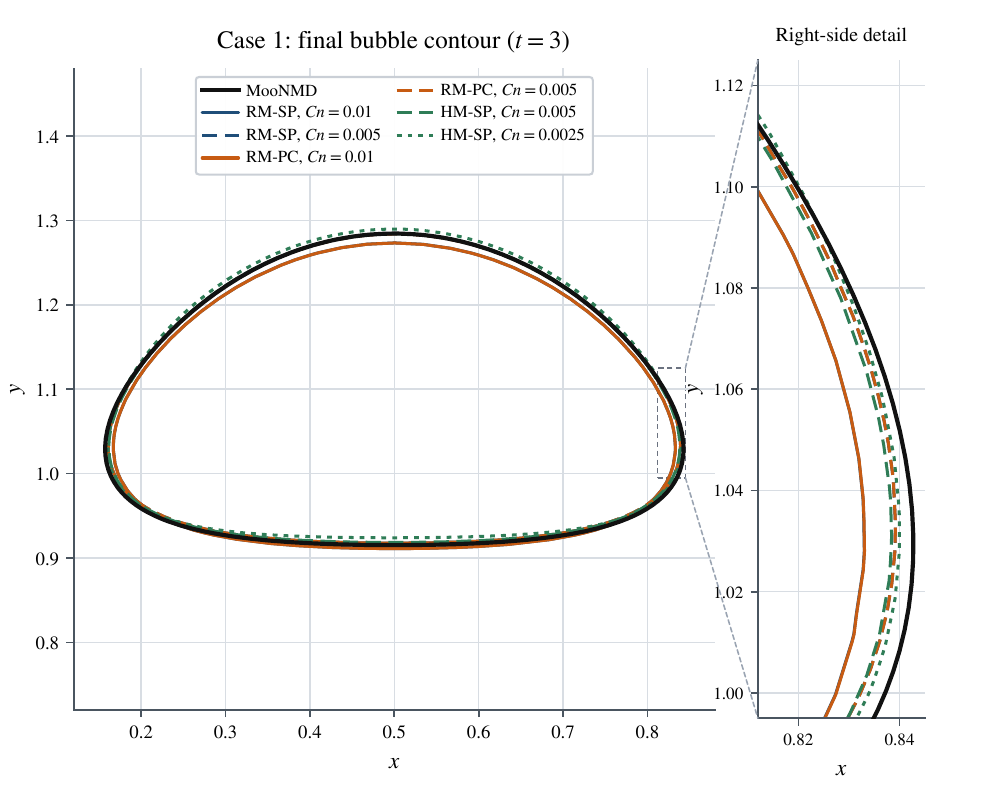}\\[-1pt]
  \small (a) Case~1
\end{minipage}\hfill
\begin{minipage}[b]{0.47\linewidth}
  \centering
  \includegraphics[width=\linewidth]{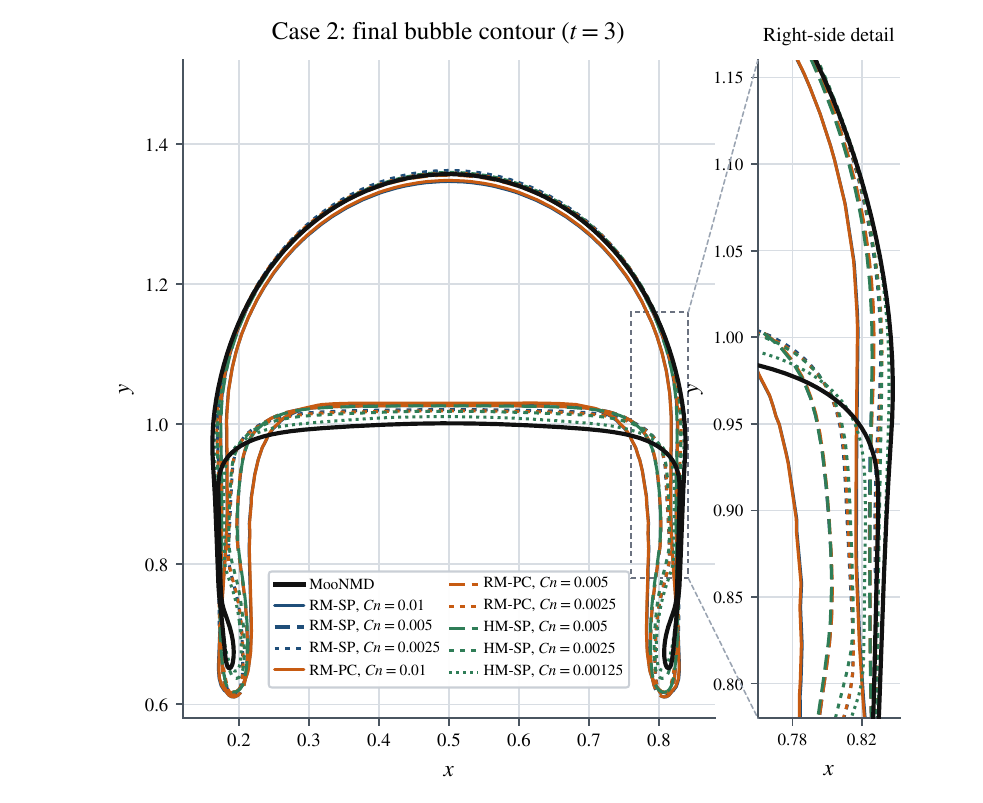}\\[-1pt]
  \small (b) Case~2
\end{minipage}
\caption{Terminal $\phi_h=0$ bubble contours at $t=3$ for the two
benchmark cases, compared with the MooNMD data of Hysing et al.\ and the selected
RM-SP, RM-PC, and HM-SP results. The RM-SP and RM-PC contours are
nearly superposed at common $\Cn$; the insets resolve the right-side
interface details.}
\label{fig:rising-bubble-terminal-contours}
\end{figure}

\FloatBarrier

\section{Conclusions}
We developed decoupled BE and BDF2 finite element schemes for the variable-density CHNS system on mesh sequences generated by fixed-topology motion or topology-changing remeshing. The phase and kinetic histories are tied to their corresponding storage norms; in the pressure-correction formulation, the pressure history is transferred in a density-weighted gradient norm. The phase update and affine density law determine the AGG-consistent mass flux used in momentum transport. Together with the scalar capillary-exchange equation, this yields linear field subproblems, a uniquely determined positive scalar, and modified energy balances for both formulations.

The numerical tests confirm second-order temporal convergence under both mesh updates. Phase mass, weak incompressibility, and carrier compatibility remain at numerical precision. In the unforced tests, the modified energies decrease as predicted, while the Rayleigh--Taylor and rising-bubble computations give comparable macroscopic dynamics for fixed-topology motion and topology-changing remeshing.

A natural next step is to develop divergence-conforming discontinuous Galerkin (DG) formulations, where exact discrete incompressibility and Piola-compatible transfer may absorb the carrier correction into the velocity space and simplify the pressure treatment. The same cross-mesh viewpoint may also be extended to wall-energy and moving-contact-line models by incorporating the corresponding boundary energy into the history construction.

\section*{Acknowledgments}
\small
\noindent
This work was supported by the Key Program of the National Natural Science Foundation of China
(Grant No. 12431014),
the National Natural Science Foundation of China
(Grant Nos.  12371410, 12571469, and 12261131501),
the 111 Project (No. D23017),
the Program for Science and Technology Innovative Research Team in Higher Educational
Institutions of Hunan Province of China,
the Scientific Research Innovation Capability Support Project for Young Faculty of China
(No. SRICSPYF-BS2025132),
and the High Performance Computing Platform of Xiangtan University.
\normalsize

\begingroup
\small
\bibliographystyle{elsarticle-num}
\bibliography{reference}

@article{AbelsGarckeGruen2012,
  author  = {Abels, Helmut and Garcke, Harald and Gr{\"u}n, G{\"u}nther},
  title   = {{Thermodynamically consistent, frame indifferent diffuse interface models for incompressible two-phase flows with different densities}},
  journal = {Mathematical Models and Methods in Applied Sciences},
  year    = {2012},
  volume  = {22},
  number  = {3},
  pages   = {1150013},
  doi     = {10.1142/S0218202511500138},
}

@incollection{ArpaiaEtAl2022,
  author    = {Arpaia, Luca and Beaugendre, H{\'e}lo{\"i}se and Cirrottola, Luca and Froehly, Algiane and Lorini, Marco and Nouveau, L{\'e}o and Ricchiuto, Mario},
  title     = {{$h$- and $r$-adaptation on simplicial meshes using MMG tools}},
  editor    = {Sevilla, Rub{\'e}n and Perotto, Simona and Morgan, Kenneth},
  booktitle = {Mesh Generation and Adaptation: Cutting-Edge Techniques},
  series    = {SEMA SIMAI Springer Series},
  volume    = {30},
  publisher = {Springer International Publishing},
  address   = {Cham},
  year      = {2022},
  pages     = {183--208},
  isbn      = {978-3-030-92540-6},
  doi       = {10.1007/978-3-030-92540-6_9},
}

@book{BoffiBrezziFortin2013,
  author    = {Boffi, Daniele and Brezzi, Franco and Fortin, Michel},
  title     = {{Mixed Finite Element Methods and Applications}},
  series    = {Springer Series in Computational Mathematics},
  volume    = {44},
  edition   = {1},
  publisher = {Springer},
  address   = {Berlin, Heidelberg},
  year      = {2013},
  isbn      = {978-3-642-36518-8},
  doi       = {10.1007/978-3-642-36519-5},
}

@article{CahnHilliard1958,
  author  = {Cahn, John W. and Hilliard, John E.},
  title   = {{Free energy of a nonuniform system. I. Interfacial free energy}},
  journal = {The Journal of Chemical Physics},
  year    = {1958},
  volume  = {28},
  number  = {2},
  pages   = {258--267},
  doi     = {10.1063/1.1744102},
  issn    = {0021-9606},
}

@article{ChenHuangYi2021,
  author  = {Chen, Yaoyao and Huang, Yunqing and Yi, Nianyu},
  title   = {{A decoupled energy stable adaptive finite element method for Cahn--Hilliard--Navier--Stokes equations}},
  journal = {Communications in Computational Physics},
  year    = {2021},
  volume  = {29},
  pages   = {1186--1212},
  doi     = {10.4208/cicp.OA-2020-0032},
}

@article{ChenHuangYi2022,
  author  = {Yaoyao Chen and Yunqing Huang and Nianyu Yi},
  title   = {{Error analysis of a decoupled, linear and stable finite element method for Cahn--Hilliard--Navier--Stokes equations}},
  journal = {Applied Mathematics and Computation},
  year    = {2022},
  volume  = {421},
  pages   = {126928},
  doi     = {10.1016/j.amc.2022.126928},
  issn    = {0096-3003},
}

@article{ChenLiYangYin2026,
  author  = {Chen, Yaoyao and Li, Dongqian and Yang, Yin and Yin, Peimeng},
  title   = {Unconditional energy stable hybrid {IEQ-FEMs} for the {Cahn--Hilliard--Navier--Stokes} equations},
  journal = {International Journal of Numerical Analysis and Modeling},
  volume  = {23},
  number  = {6},
  pages   = {881--916},
  year    = {2026}
}

@article{ChenShen2016,
  author  = {Ying Chen and Jie Shen},
  title   = {{Efficient, adaptive energy stable schemes for the incompressible Cahn--Hilliard Navier--Stokes phase-field models}},
  journal = {Journal of Computational Physics},
  year    = {2016},
  volume  = {308},
  pages   = {40--56},
  doi     = {10.1016/j.jcp.2015.12.006},
  issn    = {0021-9991},
}

@article{ChenYang2021,
  author  = {Chen, Chuanjun and Yang, Xiaofeng},
  title   = {{Fully-discrete finite element numerical scheme with decoupling structure and energy stability for the Cahn--Hilliard phase-field model of two-phase incompressible flow system with variable density and viscosity}},
  journal = {ESAIM: Mathematical Modelling and Numerical Analysis},
  year    = {2021},
  volume  = {55},
  number  = {5},
  pages   = {2323--2347},
  doi     = {10.1051/m2an/2021056},
}

@article{DapognyDobrzynskiFrey2014,
  author  = {C. Dapogny and C. Dobrzynski and P. Frey},
  title   = {{Three-dimensional adaptive domain remeshing, implicit domain meshing, and applications to free and moving boundary problems}},
  journal = {Journal of Computational Physics},
  year    = {2014},
  volume  = {262},
  pages   = {358--378},
  doi     = {10.1016/j.jcp.2014.01.005},
  issn    = {0021-9991},
}

@article{DiegelWangWangWise2017,
  author  = {Diegel, Amanda E. and Wang, Cheng and Wang, Xiaoming and Wise, Steven M.},
  title   = {{Convergence analysis and error estimates for a second order accurate finite element method for the Cahn--Hilliard--Navier--Stokes system}},
  journal = {Numerische Mathematik},
  year    = {2017},
  volume  = {137},
  number  = {3},
  pages   = {495--534},
  doi     = {10.1007/s00211-017-0887-5},
  issn    = {0945-3245},
}

@article{DingSpeltShu2007,
  author  = {Hang Ding and Peter D. M. Spelt and Chang Shu},
  title   = {{Diffuse interface model for incompressible two-phase flows with large density ratios}},
  journal = {Journal of Computational Physics},
  year    = {2007},
  volume  = {226},
  number  = {2},
  pages   = {2078--2095},
  doi     = {10.1016/j.jcp.2007.06.028},
  issn    = {0021-9991},
}

@inproceedings{DobrzynskiFrey2008,
  author    = {Dobrzynski, C. and Frey, P.},
  title     = {{Anisotropic Delaunay mesh adaptation for unsteady simulations}},
  editor    = {Garimella, Rao V.},
  booktitle = {Proceedings of the 17th International Meshing Roundtable},
  publisher = {Springer Berlin Heidelberg},
  address   = {Berlin, Heidelberg},
  year      = {2008},
  pages     = {177--194},
  isbn      = {978-3-540-87921-3},
  doi       = {10.1007/978-3-540-87921-3_11},
}

@article{DoneaGiulianiHalleux1982,
  author  = {J. Donea and S. Giuliani and J.-P. Halleux},
  title   = {{An arbitrary Lagrangian--Eulerian finite element method for transient dynamic fluid--structure interactions}},
  journal = {Computer Methods in Applied Mechanics and Engineering},
  year    = {1982},
  volume  = {33},
  number  = {1},
  pages   = {689--723},
  doi     = {10.1016/0045-7825(82)90128-1},
  issn    = {0045-7825},
}

@article{DuanLiYang2022,
  author  = {Beiping Duan and Buyang Li and Zongze Yang},
  title   = {{An energy diminishing arbitrary Lagrangian--Eulerian finite element method for two-phase Navier--Stokes flow}},
  journal = {Journal of Computational Physics},
  year    = {2022},
  volume  = {461},
  pages   = {111215},
  doi     = {10.1016/j.jcp.2022.111215},
  issn    = {0021-9991},
}

@article{EtienneGaronPelletier2009,
  author  = {S. {\'E}tienne and A. Garon and D. Pelletier},
  title   = {{Perspective on the geometric conservation law and finite element methods for ALE simulations of incompressible flow}},
  journal = {Journal of Computational Physics},
  year    = {2009},
  volume  = {228},
  number  = {7},
  pages   = {2313--2333},
  doi     = {10.1016/j.jcp.2008.11.032},
  issn    = {0021-9991},
}

@article{FarhatGeuzaineGrandmont2001,
  author  = {Charbel Farhat and Philippe Geuzaine and C{\'e}line Grandmont},
  title   = {{The discrete geometric conservation law and the nonlinear stability of ALE schemes for the solution of flow problems on moving grids}},
  journal = {Journal of Computational Physics},
  year    = {2001},
  volume  = {174},
  number  = {2},
  pages   = {669--694},
  doi     = {10.1006/jcph.2001.6932},
  issn    = {0021-9991},
}

@article{FarrellEtAl2009,
  author  = {P. E. Farrell and M. D. Piggott and C. C. Pain and G. J. Gorman and C. R. Wilson},
  title   = {{Conservative interpolation between unstructured meshes via supermesh construction}},
  journal = {Computer Methods in Applied Mechanics and Engineering},
  year    = {2009},
  volume  = {198},
  number  = {33},
  pages   = {2632--2642},
  doi     = {10.1016/j.cma.2009.03.004},
  issn    = {0045-7825},
}

@article{FarrellMaddison2011,
  author  = {P. E. Farrell and J. R. Maddison},
  title   = {{Conservative interpolation between volume meshes by local Galerkin projection}},
  journal = {Computer Methods in Applied Mechanics and Engineering},
  year    = {2011},
  volume  = {200},
  number  = {1},
  pages   = {89--100},
  doi     = {10.1016/j.cma.2010.07.015},
  issn    = {0045-7825},
}

@article{Feng2006,
  author  = {Feng, Xiaobing},
  title   = {{Fully discrete finite element approximations of the Navier--Stokes--Cahn--Hilliard diffuse interface model for two-phase fluid flows}},
  journal = {SIAM Journal on Numerical Analysis},
  year    = {2006},
  volume  = {44},
  number  = {3},
  pages   = {1049--1072},
  doi     = {10.1137/050638333},
}

@article{FormaggiaNobile1999,
  author  = {Formaggia, Luca and Nobile, Fabio},
  title   = {{A stability analysis for the arbitrary Lagrangian Eulerian formulation with finite elements}},
  journal = {East-West Journal of Numerical Mathematics},
  year    = {1999},
  volume  = {7},
  pages   = {105--132},
}

@article{FormaggiaNobile2004,
  author  = {Luca Formaggia and Fabio Nobile},
  title   = {{Stability analysis of second-order time accurate schemes for ALE-FEM}},
  journal = {Computer Methods in Applied Mechanics and Engineering},
  year    = {2004},
  volume  = {193},
  number  = {39},
  pages   = {4097--4116},
  doi     = {10.1016/j.cma.2003.09.028},
  issn    = {0045-7825},
}

@article{Fu2020,
  author  = {Guosheng Fu},
  title   = {{A divergence-free HDG scheme for the Cahn--Hilliard phase-field model for two-phase incompressible flow}},
  journal = {Journal of Computational Physics},
  year    = {2020},
  volume  = {419},
  pages   = {109671},
  doi     = {10.1016/j.jcp.2020.109671},
  issn    = {0021-9991},
}

@article{GarckeNuernbergZhao2023,
  author  = {Harald Garcke and Robert N{\"u}rnberg and Quan Zhao},
  title   = {{Structure-preserving discretizations of two-phase Navier--Stokes flow using fitted and unfitted approaches}},
  journal = {Journal of Computational Physics},
  year    = {2023},
  volume  = {489},
  pages   = {112276},
  doi     = {10.1016/j.jcp.2023.112276},
  issn    = {0021-9991},
}

@article{GarckeNuernbergZhao2024,
  author  = {Harald Garcke and Robert N{\"u}rnberg and Quan Zhao},
  title   = {{Arbitrary Lagrangian--Eulerian finite element approximations for axisymmetric two-phase flow}},
  journal = {Computers \& Mathematics with Applications},
  year    = {2024},
  volume  = {155},
  pages   = {209--223},
  doi     = {10.1016/j.camwa.2023.12.013},
  issn    = {0898-1221},
}

@article{GeuzaineGrandmontFarhat2003,
  author  = {Philippe Geuzaine and C{\'e}line Grandmont and Charbel Farhat},
  title   = {{Design and analysis of ALE schemes with provable second-order time-accuracy for inviscid and viscous flow simulations}},
  journal = {Journal of Computational Physics},
  year    = {2003},
  volume  = {191},
  number  = {1},
  pages   = {206--227},
  doi     = {10.1016/S0021-9991(03)00311-5},
  issn    = {0021-9991},
}

@article{GruenKlingbeil2014,
  author  = {G. Gr{\"u}n and F. Klingbeil},
  title   = {{Two-phase flow with mass density contrast: stable schemes for a thermodynamic consistent and frame-indifferent diffuse-interface model}},
  journal = {Journal of Computational Physics},
  year    = {2014},
  volume  = {257},
  pages   = {708--725},
  doi     = {10.1016/j.jcp.2013.10.028},
  issn    = {0021-9991},
}

@article{GuermondMinevShen2006,
  author  = {J.-L. Guermond and P. Minev and Jie Shen},
  title   = {{An overview of projection methods for incompressible flows}},
  journal = {Computer Methods in Applied Mechanics and Engineering},
  year    = {2006},
  volume  = {195},
  number  = {44},
  pages   = {6011--6045},
  doi     = {10.1016/j.cma.2005.10.010},
  issn    = {0045-7825},
}

@article{GuermondQuartapelle2000,
  author  = {J.-L. Guermond and L. Quartapelle},
  title   = {{A projection FEM for variable density incompressible flows}},
  journal = {Journal of Computational Physics},
  year    = {2000},
  volume  = {165},
  number  = {1},
  pages   = {167--188},
  doi     = {10.1006/jcph.2000.6609},
  issn    = {0021-9991},
}

@article{GuermondSalgado2009,
  author  = {J.-L. Guermond and Abner Salgado},
  title   = {{A splitting method for incompressible flows with variable density based on a pressure Poisson equation}},
  journal = {Journal of Computational Physics},
  year    = {2009},
  volume  = {228},
  number  = {8},
  pages   = {2834--2846},
  doi     = {10.1016/j.jcp.2008.12.036},
  issn    = {0021-9991},
}

@article{HanWang2015,
  author  = {Daozhi Han and Xiaoming Wang},
  title   = {{A second order in time, uniquely solvable, unconditionally stable numerical scheme for Cahn--Hilliard--Navier--Stokes equation}},
  journal = {Journal of Computational Physics},
  year    = {2015},
  volume  = {290},
  pages   = {139--156},
  doi     = {10.1016/j.jcp.2015.02.046},
  issn    = {0021-9991},
}

@article{HohenbergHalperin1977,
  author  = {Hohenberg, P. C. and Halperin, B. I.},
  title   = {{Theory of dynamic critical phenomena}},
  journal = {Reviews of Modern Physics},
  year    = {1977},
  volume  = {49},
  pages   = {435--479},
  doi     = {10.1103/RevModPhys.49.435},
}

@article{Huang2005,
  author  = {Weizhang Huang},
  title   = {{Metric tensors for anisotropic mesh generation}},
  journal = {Journal of Computational Physics},
  year    = {2005},
  volume  = {204},
  number  = {2},
  pages   = {633--665},
  doi     = {10.1016/j.jcp.2004.10.024},
  issn    = {0021-9991},
}

@article{HuangKamenski2015,
  author  = {Weizhang Huang and Lennard Kamenski},
  title   = {{A geometric discretization and a simple implementation for variational mesh generation and adaptation}},
  journal = {Journal of Computational Physics},
  year    = {2015},
  volume  = {301},
  pages   = {322--337},
  doi     = {10.1016/j.jcp.2015.08.032},
  issn    = {0021-9991},
}

@article{HuangKamenski2018,
  author  = {Huang, Weizhang and Kamenski, Lennard},
  title   = {{On the mesh nonsingularity of the moving mesh PDE method}},
  journal = {Mathematics of Computation},
  year    = {2018},
  volume  = {87},
  number  = {312},
  pages   = {1887--1911},
  doi     = {10.1090/mcom/3271},
  issn    = {0025-5718},
}

@article{HuangKamenskiRussell2015,
  author  = {Huang, Weizhang and Kamenski, Lennard and Russell, Robert},
  title   = {{A comparative numerical study of meshing functionals for variational mesh adaptation}},
  journal = {Journal of Mathematical Study},
  year    = {2015},
  volume  = {48},
  number  = {2},
  pages   = {168--186},
  doi     = {10.4208/jms.v48n2.15.04},
}

@book{HuangRussell2011,
  author    = {Huang, Weizhang and Russell, Robert D.},
  title     = {{Adaptive Moving Mesh Methods}},
  series    = {Applied Mathematical Sciences},
  volume    = {174},
  edition   = {1},
  publisher = {Springer},
  address   = {New York, NY},
  year      = {2011},
  isbn      = {978-1-4419-7915-5},
  doi       = {10.1007/978-1-4419-7916-2},
}

@article{HysingEtAl2009,
  author  = {Hysing, S. and Turek, S. and Kuzmin, D. and Parolini, N. and Burman, E. and Ganesan, S. and Tobiska, L.},
  title   = {{Quantitative benchmark computations of two-dimensional bubble dynamics}},
  journal = {International Journal for Numerical Methods in Fluids},
  year    = {2009},
  volume  = {60},
  number  = {11},
  pages   = {1259--1288},
  doi     = {10.1002/fld.1934},
}

@article{Jacqmin1999,
  author  = {David Jacqmin},
  title   = {{Calculation of two-phase Navier--Stokes flows using phase-field modeling}},
  journal = {Journal of Computational Physics},
  year    = {1999},
  volume  = {155},
  number  = {1},
  pages   = {96--127},
  doi     = {10.1006/jcph.1999.6332},
  issn    = {0021-9991},
}

@article{KayStylesWelford2008,
  author  = {Kay, David and Styles, Vanessa and Welford, Richard},
  title   = {{Finite element approximation of a Cahn--Hilliard--Navier--Stokes system}},
  journal = {Interfaces and Free Boundaries},
  year    = {2008},
  volume  = {10},
  number  = {1},
  pages   = {15--43},
  doi     = {10.4171/IFB/178},
  issn    = {1463-9963},
}

@article{KhanwaleEtAl2020,
  author  = {Makrand A. Khanwale and Alec D. Lofquist and Hari Sundar and James A. Rossmanith and Baskar Ganapathysubramanian},
  title   = {{Simulating two-phase flows with thermodynamically consistent energy stable Cahn--Hilliard Navier--Stokes equations on parallel adaptive octree based meshes}},
  journal = {Journal of Computational Physics},
  year    = {2020},
  volume  = {419},
  pages   = {109674},
  doi     = {10.1016/j.jcp.2020.109674},
  issn    = {0021-9991},
}

@article{KhanwaleEtAl2023,
  author  = {Makrand A. Khanwale and Kumar Saurabh and Masado Ishii and Hari Sundar and James A. Rossmanith and Baskar Ganapathysubramanian},
  title   = {{A projection-based, semi-implicit time-stepping approach for the Cahn--Hilliard Navier--Stokes equations on adaptive octree meshes}},
  journal = {Journal of Computational Physics},
  year    = {2023},
  volume  = {475},
  pages   = {111874},
  doi     = {10.1016/j.jcp.2022.111874},
  issn    = {0021-9991},
}

@article{LiChengShenZhang2021,
  author  = {Maojun Li and Yongping Cheng and Jie Shen and Xiangxiong Zhang},
  title   = {{A bound-preserving high order scheme for variable density incompressible Navier--Stokes equations}},
  journal = {Journal of Computational Physics},
  year    = {2021},
  volume  = {425},
  pages   = {109906},
  doi     = {10.1016/j.jcp.2020.109906},
  issn    = {0021-9991},
}

@article{LiLiuShenZheng2025,
  author  = {Li, Xiaoli and Liu, Zhengguang and Shen, Jie and Zheng, Nan},
  title   = {{On a class of higher-order fully decoupled schemes for the Cahn--Hilliard--Navier--Stokes system}},
  journal = {Journal of Scientific Computing},
  year    = {2025},
  volume  = {103},
  number  = {1},
  pages   = {27},
  doi     = {10.1007/s10915-025-02835-y},
  issn    = {1573-7691},
}

@article{LiShen2022,
  author  = {Li, Xiaoli and Shen, Jie},
  title   = {{On fully decoupled MSAV schemes for the Cahn--Hilliard--Navier--Stokes model of two-phase incompressible flows}},
  journal = {Mathematical Models and Methods in Applied Sciences},
  year    = {2022},
  volume  = {32},
  number  = {03},
  pages   = {457--495},
  doi     = {10.1142/S0218202522500117},
}

@article{Liu2013,
  author  = {Liu, Jie},
  title   = {{Simple and efficient ALE methods with provable temporal accuracy up to fifth order for the Stokes equations on time varying domains}},
  journal = {SIAM Journal on Numerical Analysis},
  year    = {2013},
  volume  = {51},
  number  = {2},
  pages   = {743--772},
  doi     = {10.1137/110825996},
}

@article{LiuShenYang2015,
  author  = {Liu, Chun and Shen, Jie and Yang, Xiaofeng},
  title   = {{Decoupled energy stable schemes for a phase-field model of two-phase incompressible flows with variable density}},
  journal = {Journal of Scientific Computing},
  year    = {2015},
  volume  = {62},
  number  = {2},
  pages   = {601--622},
  doi     = {10.1007/s10915-014-9867-4},
  issn    = {0885-7474},
}

@article{LowengrubTruskinovsky1998,
  author  = {Lowengrub, John S. and Truskinovsky, Lev},
  title   = {{Quasi-incompressible Cahn--Hilliard fluids and topological transitions}},
  journal = {Proceedings of the Royal Society of London. Series A: Mathematical, Physical and Engineering Sciences},
  year    = {1998},
  volume  = {454},
  number  = {1978},
  pages   = {2617--2654},
  doi     = {10.1098/rspa.1998.0273},
  issn    = {1364-5021},
}

@article{PontCodinaBaiges2017,
  author  = {Pont, Arnau and Codina, Ramon and Baiges, Joan},
  title   = {{Interpolation with restrictions between finite element meshes for flow problems in an ALE setting}},
  journal = {International Journal for Numerical Methods in Engineering},
  year    = {2017},
  volume  = {110},
  number  = {13},
  pages   = {1203--1226},
  doi     = {10.1002/nme.5444},
}

@article{ShenXuYang2018,
  author  = {Jie Shen and Jie Xu and Jiang Yang},
  title   = {{The scalar auxiliary variable (SAV) approach for gradient flows}},
  journal = {Journal of Computational Physics},
  year    = {2018},
  volume  = {353},
  pages   = {407--416},
  doi     = {10.1016/j.jcp.2017.10.021},
  issn    = {0021-9991},
}

@article{ShenXuYang2019,
  author  = {Shen, Jie and Xu, Jie and Yang, Jiang},
  title   = {{A new class of efficient and robust energy stable schemes for gradient flows}},
  journal = {SIAM Review},
  year    = {2019},
  volume  = {61},
  number  = {3},
  pages   = {474--506},
  doi     = {10.1137/17M1150153},
}

@article{ShenYang2010,
  author  = {Shen, Jie and Yang, Xiaofeng},
  title   = {{A phase-field model and its numerical approximation for two-phase incompressible flows with different densities and viscosities}},
  journal = {SIAM Journal on Scientific Computing},
  year    = {2010},
  volume  = {32},
  number  = {3},
  pages   = {1159--1179},
  doi     = {10.1137/09075860X},
}

@article{ShenYang2015,
  author  = {Shen, Jie and Yang, Xiaofeng},
  title   = {{Decoupled, energy stable schemes for phase-field models of two-phase incompressible flows}},
  journal = {SIAM Journal on Numerical Analysis},
  year    = {2015},
  volume  = {53},
  number  = {1},
  pages   = {279--296},
  doi     = {10.1137/140971154},
}

@article{WangHuangWei2026,
  title={A Trace--Logarithmic Variational Functional for Equidistribution and Alignment in Moving Mesh Adaptation}, 
  author={Wenbin Wang and Yunqing Huang and Huayi Wei},
  year={2026},
  eprint={2601.20235},
  archivePrefix={arXiv},
  primaryClass={math.NA},
}

@article{WangLiWang2024,
  author  = {Jiancheng Wang and Maojun Li and Cheng Wang},
  title   = {{Efficient finite element schemes for a phase field model of two-phase incompressible flows with different densities}},
  journal = {Journal of Computational Physics},
  year    = {2024},
  volume  = {518},
  pages   = {113331},
  doi     = {10.1016/j.jcp.2024.113331},
  issn    = {0021-9991},
}

@article{YueFengLiuShen2004,
  author  = {Yue, Pengtao and Feng, James J. and Liu, Chun and Shen, Jie},
  title   = {{A diffuse-interface method for simulating two-phase flows of complex fluids}},
  journal = {Journal of Fluid Mechanics},
  year    = {2004},
  volume  = {515},
  pages   = {293--317},
  doi     = {10.1017/S0022112004000370},
}

@article{ZhangLuoWang2025,
  author  = {Jinpeng Zhang and Li Luo and Xiaoping Wang},
  title   = {{A fully-decoupled second-order-in-time and unconditionally energy stable scheme for the Cahn--Hilliard--Navier--Stokes equations with variable density}},
  journal = {Journal of Computational Physics},
  year    = {2025},
  volume  = {532},
  pages   = {113943},
  doi     = {10.1016/j.jcp.2025.113943},
  issn    = {0021-9991},
}

@article{ZhaoHan2021,
  author  = {Jia Zhao and Daozhi Han},
  title   = {{Second-order decoupled energy-stable schemes for Cahn--Hilliard--Navier--Stokes equations}},
  journal = {Journal of Computational Physics},
  year    = {2021},
  volume  = {443},
  pages   = {110536},
  doi     = {10.1016/j.jcp.2021.110536},
  issn    = {0021-9991},
}

@article{ZhaoRen2020,
  author  = {Quan Zhao and Weiqing Ren},
  title   = {{An energy-stable finite element method for the simulation of moving contact lines in two-phase flows}},
  journal = {Journal of Computational Physics},
  year    = {2020},
  volume  = {417},
  pages   = {109582},
  doi     = {10.1016/j.jcp.2020.109582},
  issn    = {0021-9991},
}

@article{ZhengEtAl2026,
  author  = {Zheng, Yangyang and Wei, Huayi and Huang, Yunqing and Chen, Chunyu and Tian, Tian and Liu, Hanbin and Wang, Wenbin and He, Liang},
  title   = {{FEALPy: A cross-platform intelligent numerical simulation engine}},
  journal = {Communications in Computational Physics},
  year    = {2026},
  volume  = {40},
  number  = {5},
  pages   = {1676--1704},
  doi     = {10.4208/cicp.oa-2025-0327},
}

@article{ZhengKarniadakis2016,
  author  = {X. Zheng and G. E. Karniadakis},
  title   = {{A phase-field/ALE method for simulating fluid--structure interactions in two-phase flow}},
  journal = {Computer Methods in Applied Mechanics and Engineering},
  year    = {2016},
  volume  = {309},
  pages   = {19--40},
  doi     = {10.1016/j.cma.2016.04.035},
  issn    = {0045-7825},
}

@article{ZouEtAl2026,
  author  = {Zou, Guang-An and Wang, Meiting and Pan, Kejia and Yang, Yin and Yang, Xiaofeng},
  title   = {{Efficient energy-stable discontinuous Galerkin scheme for the non-isothermal Cahn--Hilliard--Navier--Stokes two-phase fluid flow system}},
  journal = {International Journal for Numerical Methods in Engineering},
  year    = {2026},
  volume  = {127},
  number  = {7},
  pages   = {e70319},
  doi     = {10.1002/nme.70319},
}
\endgroup

\end{document}